\newcommand\sxto[1]{\mathbin{\smash{
\begin{tikzpicture}[baseline={([yshift=-1pt]current bounding box.south)}]
    \node (A) at (0,0) [inner xsep=0pt, inner ysep=1pt, minimum width=0.15cm] {\ensuremath{\scriptstyle #1}};
    \draw [->, line width=0.4pt, line cap=round]
        ([xshift=-2.5pt] A.south west)
        to ([xshift=3pt] A.south east);
\end{tikzpicture}}}}
\newcommand\sxfrom[1]{\mathbin{\smash{
\begin{tikzpicture}[baseline={([yshift=-1pt]current bounding box.south)}]
    \node (A) at (0,0) [inner xsep=0pt, inner ysep=1pt, minimum width=0.15cm] {\ensuremath{\scriptstyle #1}};
    \draw [<-, line width=0.4pt, line cap=round]
        ([xshift=-2.5pt] A.south west)
        to ([xshift=3pt] A.south east);
\end{tikzpicture}}}}
\tikzset{dagger/.style={
    decoration={markings, mark=at position 0.5 with {
        \draw [-] ++ (0,-.75mm) -- (0,.75mm);}
    }, postaction={decorate},
}}
\newcommand{\cat}[1]{\ensuremath{\mathbf{#1}}}
\newcommand{\op}{\ensuremath{{}^{\text{op}}}}
\newcommand{\id}[1][]{\ensuremath{\mathrm{id}_{#1}}}
\newcommand{\ie}{\textit{i.e.}\ }
\newcommand{\eg}{\textit{e.g.}\ }
\newcommand{\N}{\mathbb{N}}
\newcommand{\abs}[1]{\ensuremath{\left\vert {#1}\right\vert}}
\newcommand{\norm}[1]{\ensuremath{\left\Vert {#1}\right\Vert}}
\DeclareMathOperator{\coker}{coker}
\DeclareMathOperator{\dcolim}{dcolim} 
\DeclareMathOperator{\dlim}{dlim} 
\DeclareMathOperator{\im}{im}
\newcommand{\inprod}[2]{\ensuremath{\langle #1 \,,\, #2 \rangle}}
\title{Limits in dagger categories}
\author{Chris Heunen and Martti Karvonen}
\address{School of Informatics, University of Edinburgh\\10 Crichton Street, Edinburgh EH8 9AB, United Kingdom}
\keywords{Dagger category, limit, adjoint functors}
\thanks{
We thank David Jordan, Tom Leinster, Peter Selinger, and the anonymous referee, for many heplful suggestions.
This work was supported by the Osk. Huttunen Foundation and EPSRC Fellowship EP/L002388/1. 
}
\begin{document}
\maketitle
\begin{abstract}
  We develop a notion of limit for dagger categories, that we show is suitable in the following ways:
  it subsumes special cases known from the literature;
  dagger limits are unique up to unitary isomorphism;
  a wide class of dagger limits can be built from a small selection of them;
  dagger limits of a fixed shape can be phrased as dagger adjoints to a diagonal functor;
  dagger limits can be built from ordinary limits in the presence of polar decomposition;
  dagger limits commute with dagger colimits in many cases.
\end{abstract}

\section{Introduction}

Dagger categories are categories with a functorial correspondence $\hom(A,B) \simeq \hom(B,A)$. They occur naturally in many settings, such as:
\begin{itemize}
  \item Much of algebra is based on structure-preserving functions, but sometimes many-valued homomorphisms are the right tool. More generally, \emph{relations} are useful in \eg algebraic topology and are fruitful to study abstractly~\cite{maclane:additiverelations}. Categories of relations can be axiomatised as certain dagger categories~\cite{puppe:korrespondenzen} and embed all exact categories~\cite{brinkmann:relations}. There is a large amount of literature on categories of relations~\cite{carboniwalters:cartesianbicategories,freydscedrov:catsalligators}. 

  \item There are many important \emph{involutive structures} in algebra. For example, a group, or more generally, an inverse semigroup~\cite{lawson:inversesemigroups}, may be regarded as a one-object dagger category that satisfies extra properties. So-called inverse categories~\cite{cockettlack:restrictioncategories} are used in computer science to model reversible computations~\cite{kaarsgaardaxelsengluck:joininversecategories}.

  \item	Just like monoids can be defined in a monoidal category, dagger categories provide the right infrastructure to study various internal involutive structures~\cite{egger:involutive}. Especially in \emph{quantum physics}, categorifications of involutive algebraic objects~\cite{ghezlimaroberts:wstarcategories} are used to model time-reversal~\cite{baez:quandaries,selinger:completelypositive}.
\end{itemize}
However, all of the above examples assume extra structure or properties on top of the ability to reverse morphisms. We wish to study dagger categories in their own right. 
Such a study is worthwhile because there is more to dagger category theory than formal category theory applied to categories with an extra structure (the dagger), for several reasons:
\begin{itemize}
  \item Dagger categories are \emph{self-dual} in a strong sense. Consequently dagger categories behave differently than categories on a fundamental level. Categories are intuitively built from objects $\bullet$ and morphisms $\bullet \rightarrow \bullet$. Dagger categories are built from $\bullet$ and $\bullet\leftrightarrows\bullet$. The result is a theory that is essentially directionless.
  For example, a dagger category has $\cat{J}$-shaped limits if and only if it has $\cat{J}\op$-shaped colimits (see also Section~\ref{sec:commutativity} below).
  Similarly, any dagger-preserving adjunction between dagger categories is ambidextrous~\cite{heunenkarvonen:daggermonads}.

  \item
  Objects in a dagger category do not behave the same when they are merely isomorphic, but only when the isomorphism respects the dagger.
  A case in point is the dagger category of Hilbert spaces and continuous linear functions. Two objects are isomorphic when there is a linear homeomorphism with respect to the two topologies induced by the inner products. The inner product itself need not be respected by the isomorphism, unless it is \emph{unitary}.
  The working philosophy for dagger categories is the `way of the dagger': all structure in sight should cooperate with the dagger.

  \item 
  There is no known way to translate categorical notions to their dagger categorical counterparts. There is a forgetful functor from the 2-category of dagger categories with dagger-preserving functors and natural transformations, to the 2-category of categories, but this forgetful functor has no 2-adjoints. 
  More seriously, we do want to consider non-dagger categories, for example when considering equalizers.
\end{itemize}

This article studies limits in dagger categories. This goal brings to the forefront the above two features of self-duality and unitarity. If $l_A \colon L \to D(A)$ is a limit for a diagram $D\colon\cat{J}\to\cat{C}$, then $l_A^\dag \colon D(A) \to L$ is a colimit for $\dag\circ D\colon\cat{J}\op\to\cat{C}$, so $L$ has two universal properties; it stands to reason that they should be compatible with each other. Moreover, a \emph{dagger limit} should be unique not just up to mere isomorphism but up to unitary isomorphism.

After Section~\ref{sec:dagcats} sets the scene, we define in Section~\ref{sec:daglims} the notion of dagger limit that subsumes all known examples. Section~\ref{sec:unitarity} shows how dagger limits are unique up to unitary isomorphism. 
Section~\ref{sec:completeness} deals with completeness. If a dagger category has `too many' dagger limits, it degenerates (showcasing how dagger category theory can be quite different than ordinary category theory). A more useful notion of `dagger completeness' is defined, and shown to be equivalent to having dagger equalizers, dagger products, and dagger intersections. 
Section~\ref{sec:globaldaglims} formulates dagger limits in terms of an adjoint to a diagonal functor, and Section~\ref{sec:daft} attempts a dagger version of an adjoint functor theorem. Section~\ref{sec:polar} makes precise the idea that polar decomposition turns ordinary limits into dagger limits. Finally, Section~\ref{sec:commutativity} proves that dagger limits commute with dagger colimits in a wide range of situations.

To end this introduction, let us discuss earlier attempts at defining dagger limits~\cite{vicary2011completeness}. That work defines a notion of a dagger limit for diagrams $D\colon\cat{J}\to\cat{C}$ where \cat{J} has finitely many objects and \cat{C} is a dagger category enriched in commutative monoids. We dispense with both requirements. Proposition~\ref{prop:limswithsums} shows that when these requirements are satisfied the two notions agree for a wide class of diagrams. However, they do not always agree, as discussed in Example~\ref{ex:pullback}.

\section{Dagger categories}\label{sec:dagcats}

Before going into limits in dagger categories, this section sets the scene by discussing dagger categories themselves. Dagger categories can behave rather differently than ordinary categories, see \eg~\cite[9.7]{hott}. We start by establishing terminology and setting conventions.

\begin{definition}
  A \emph{dagger} is a contravariant involutive identity-on-objects functor. Explicitly a dagger is a functor $\dag\colon\cat{C}\op\to\cat{C}$ satisfying $A^\dag=A$ on objects and $f^{\dag\dag}=f$ on morphisms. A \emph{dagger category} is a category equipped with a dagger. 
\end{definition}

\begin{example}
  Examples of dagger categories abound. We first give some concrete examples, and will later add more abstract constructions.
  \begin{itemize}
    \item 
    Any monoid $M$ equipped with an involutive homomorphism $f\colon M\op\to M$ may be regarded as a one-object dagger category with $x^\dag=f(x)$. For example: the complex numbers with conjugation, either under addition or multiplication. Or: the algebra of (complex) $n$-by-$n$ matrices with conjugate transpose.

    \item 
    The category \cat{Hilb} of (complex) Hilbert spaces and bounded linear maps is a dagger category, taking the dagger of $f \colon A \to B$ to be its adjoint, \ie unique morphism satisfying $\langle f(a) \mid b \rangle = \langle a \mid f^\dag(b) \rangle$ for all $a \in A$ and $b \in B$. The full subcategory of finite-dimensional Hilbert spaces $\cat{FHilb}$ is a dagger category too. 

    \item 
    The category \cat{DStoch} has finite sets as objects. A morphism $A \to B$ is a matrix $f \colon A \times B \to [0,1]$ that is doubly stochastic, \ie satisfies $\sum_{a \in A} f(a,b)=1$ for all $b \in B$ and $\sum_{b \in B} f(a,b)=1$ for all $a \in A$. This becomes a dagger category with $f^\dag(b,a) = f(a,b)$.

    \item 
    In the category $\cat{Rel}$ with sets as objects and relations $R \subseteq A \times B$ as morphisms $A \to B$, composition is given by $S \circ R = \{ (a,c) \mid \exists b \in B \colon (a,b) \in R, (b,c) \in S \}$. This becomes a dagger category with $R^\dag=\{(b,a)\mid (a,b)\in R\}$. 
    The full subcategory $\cat{FinRel}$ of finite sets is also a dagger category.

    \item 
    If $\cat{C}$ is a category with pullbacks, the category $\cat{Span}(\cat{C})$ of spans is defined as follows. Objects are the same as those of $\cat{C}$. A morphism $A \to B$ in $\cat{Span}(\cat{C})$ is a span $A \leftarrow S \to B$ of morphisms in $\cat{C}$, where two spans $A \leftarrow S \to B$ and $A \leftarrow S' \to B$ are identified when there is an isomorphism $S \simeq S'$ making both triangles commute. Composition is given by pullback. The category $\cat{Span}(\cat{C})$ has a dagger, given by $(A \leftarrow S \to B)^\dag = (B \leftarrow S \to A)$.

    \item Any groupoid has a canonical dagger with $f^\dag = f^{-1}$. Of course, a given groupoid might have other daggers as well. For example, the core of \cat{Hilb} inherits a dagger from \cat{Hilb} that is different from the canonical one it has as a groupoid.
  \end{itemize}
\end{example}

In a dagger category, one can define dagger versions of various ordinary notions in category theory. A lot of the terminology for the dagger counterparts of ordinary notions comes from the dagger category \cat{Hilb}.

\begin{definition} 
  A morphism $f \colon A \to B$ in a dagger category is:
  \begin{itemize}
  	\item \emph{unitary} (or a \emph{dagger isomorphism}) if $f^\dag f = \id[A]$ and $ff^\dag=\id[B]$;

  	\item an \emph{isometry} (or a \emph{dagger monomorphism}) if $f^\dag f = \id[A]$;

  	\item a \emph{coisometry} (or a \emph{dagger epimorphism}) if $ff^\dag = \id[B]$;

  	\item a \emph{partial isometry} if $f=ff^\dag f$;
  \end{itemize}
  Moreover, an endomorphism $A\to A$ is
  \begin{itemize}
  	\item \emph{self-adjoint} if $f=f^\dag$;

  	\item a \emph{projection} (or a \emph{dagger idempotent}) if  and $f=f^\dag=ff$;

  	\item \emph{positive} if $f=g^\dag g$ for some $g \colon A \to B$. 
  \end{itemize}
  A \emph{dagger subobject} is a subobject that can be represented by a dagger monomorphism.
  A dagger category is \emph{unitary} when objects that are isomorphic are also unitarily isomorphic.
\end{definition}

A morphism $f$ is dagger monic iff $f^\dag$ is dagger epic. The other concepts introduced above are self-adjoint, \ie a morphism $f$ is unitary/a partial isometry/self-adjoint/a projection/positive iff $f^\dag$ is.
Notice that dagger monomorphisms are split monomorphisms, and dagger epimorphisms are split epimorphisms.
A morphism is dagger monic (epic) if and only if it is both a partial isometry and monic (epic).
In diagrams, we will depict 
partial isometries as $\begin{tikzpicture} \draw[dagger,->] (0,0) to (1,0); \end{tikzpicture}$, 
dagger monomorphisms as $\begin{tikzpicture} \draw[dagger,>->] (0,0) to (1,0); \end{tikzpicture}$, 
dagger epimorphisms as $\begin{tikzpicture} \draw[dagger,->>] (0,0) to (1,0); \end{tikzpicture}$, 
and dagger isomorphisms as $\begin{tikzpicture} \draw[dagger,>->>] (0,0) to (1,0); \end{tikzpicture}$, 
If $f$ is a partial isometry, then both $f^\dag f$ and $ff^\dag$ are projections.

The definition of a dagger subobject might seem odd: two monomorphisms $m:M\hookrightarrow A$ and $n\colon N\hookrightarrow A$ are considered to represent the same subobject when there is an isomorphism $f\colon M\to N$ such that $m=nf$, whereas one might expect that dagger monics $m$ and $n$ represent the same \emph{dagger} subobject if $f$ can be chosen to be unitary. Hence there seems to be a possibility that two dagger monics representing the same subobject might nevertheless represent different dagger subobjects. However, this can not happen: if $m,n$ are dagger monic and $f$ is an isomorphism satisfying $m=nf$, then $f$ is in fact unitary. This is because $m=nf$ implies  $n^\dag m=n^\dag n f=f$ and $mf^{-1}=n$ implies $f^{-1}=m^\dag n$ so that $f^{-1}=f^\dag$.

\begin{example}
  An \emph{inverse category} is a dagger category in which every morphism is a partial isometry, and positive morphisms commute~\cite{kastl:inversecategories}: $ff^\dag f=f$ and $f^\dag f g^\dag g = g^\dag g f^\dag f$ for all morphisms $f \colon A \to B$ and $g \colon A \to C$. A prototypical inverse category is \cat{PInj}, the category of sets and partial injections, which is a subcategory of $\cat{Rel}$. 	
\end{example}

\begin{remark}\label{rem:pisetc} 
  Partial isometries need not be closed under composition~\cite[5.4]{heunen:ltwo}. The same holds for self-adjoint morphisms and positive morphisms. 
  However, given partial isometries $p\colon A\to B$ and $q\colon B\to C$, their composition $qp$ is a partial isometry whenever the projections $q^\dag q$ and $pp^\dag$ commute. Similarly, a morphism $f$ factors through a partial isometry $p$ with the same codomain if and only if $pp^\dag f=f$.
\end{remark}

\begin{definition}
  A \emph{dagger functor} is a functor $F\colon\cat{C}\to \cat{D}$ between dagger categories satisfying $F(f^\dag)=F(f)^\dag$. Denote the category of small dagger categories and dagger functors by \cat{DagCat}. 
\end{definition}

There is no need to go further and define `dagger natural transformations': if $\sigma\colon F\Rightarrow G$ is a natural transformation between dagger functors, then taking daggers componentwise defines a natural transformation $\sigma^\dag\colon G\Rightarrow F$. This no longer holds for natural transformations between arbitrary functors into a dagger category, though, see Definition~\ref{def:adjointability}.

\begin{example}\label{ex:functorcat}
  If $\cat{C}$ and $\cat{D}$ are dagger categories, then the category $\cat{DagCat}(\cat{C},\cat{D})$ of dagger functors $\cat{C} \to \cat{D}$ and natural transformation is again a dagger category.
  In particular, taking $\cat{C}$ to be a group $G$ and $\cat{D}=\cat{Hilb}$, this shows that the category of unitary representations of $G$ and intertwiners is a dagger category.
\end{example}

\begin{example}
  Dagger categories also form the objects of a dagger category as follows~\cite[3.1.8]{heunen:thesis}: morphisms $\cat{C} \to \cat{D}$ are functors $F \colon \cat{C}\op \to \cat{D}$ that have a left adjoint, where two such functors are identified when they are naturally isomorphic. The identity on $\cat{C}$ is its dagger $\dag \colon \cat{C}\op \to \cat{C}$; its left adjoint is $\dag\op \colon \cat{C}\op \to \cat{C}$. The composition of $F \colon \cat{C}\op \to \cat{D}$ and $G \colon \cat{D}\op \to \cat{E}$ is $G \circ \dag \circ F \colon \cat{C}\op \to \cat{E}$; its left adjoint is $F' \circ \dag \circ G'$, where $F' \dashv F$ and $G' \dashv G$.
  The dagger of $F \colon \cat{C}\op \to \cat{D}$ is given by the right adjoint of $F\op \colon \cat{C} \to \cat{D}\op$.
\end{example}

Example~\ref{ex:functorcat} in fact makes \cat{DagCat} into a \emph{dagger 2-category}. Formally, this means a category enriched in \cat{DagCat}, Concretely, a dagger 2-category is a 2-category where the 2-cells have a dagger that cooperates with vertical and horizontal composition, in the sense that the equations
    \begin{align*}
      (\sigma^{\dag})^\dag&=\sigma \\
      (\tau\circ\sigma)^\dagger&=\sigma^\dagger\circ\tau^\dagger \\
      (\sigma*\tau)^\dag&=(\sigma^\dag) * (\tau^\dag)
    \end{align*}
hold, where $\circ$ denotes vertical composition and $*$ denotes horizontal composition.
Dagger 2-functors are 2-functors that preserve this dagger. Strictly speaking, this defines \emph{strict} dagger 2-categories. Defining dagger bicategories is straightforward but not necessary for our purposes.

The forgetful 2-functor $\cat{DagCat}\to\cat{Cat}$ has no 2-adjoints, but the forgetful 1-functor has both 1-adjoints.
As final examples of dagger categories, we recall the definitions of free and cofree dagger categories~\cite[3.1.17,3.1.19]{heunen:thesis}, which will be used in Examples~\ref{ex:daggershaped} and~\ref{ex:cofree} below.

\begin{proposition}\label{prop:free}
  The forgetful functor $\cat{DagCat} \to \cat{Cat}$ has a left adjoint \cat{ZigZag(-)}. The objects of \cat{Zigzag}(\cat{C}) are the same as in \cat{C}, and a morphism $A \to B$ is an alternating sequence of morphisms $A \rightarrow C_1 \leftarrow \cdots \rightarrow C_n \leftarrow B$ from $\cat{C}$, subject to the identifications $\big(A \sxto{f} C \sxfrom{\id} C \sxto{g} D \sxfrom{h} B \big) = \big( A \sxto{g \circ f} D \sxfrom{h} B \big)$ and $\big(A \sxto{f} C \sxfrom{g} D \sxto{\id} D \sxfrom{h} B\big) = \big( A \sxto{f} C \sxfrom{g \circ h} B\big )$; composition is given by juxtaposition. The category $\cat{Zigzag}(\cat{C})$ has a dagger $(f_1,\ldots,f_n)^\dag = (f_n,\ldots,f_1)$.
\end{proposition}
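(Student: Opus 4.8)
The plan is to verify by hand that $\cat{Zigzag}(\cat{C})$ is a dagger category, and then to exhibit for each small category $\cat{C}$ a universal arrow $\eta_{\cat{C}}\colon\cat{C}\to U\cat{Zigzag}(\cat{C})$ to the forgetful functor $U\colon\cat{DagCat}\to\cat{Cat}$; functoriality of $\cat{Zigzag}(-)$ and naturality of $\eta$ then follow automatically.

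\emph{Step 1: $\cat{Zigzag}(\cat{C})$ is a dagger category.} Read the two displayed equations as generating the smallest equivalence relation on alternating sequences that is a two-sided congruence for juxtaposition; composition of equivalence classes is then well-defined by construction, and associativity is inherited from that of juxtaposition. One checks that the trivial zigzag $A\sxto{\id}A\sxfrom{\id}A$ is a two-sided unit — this is exactly what the two identifications say when one of the composites is trivial (the first identification handles the left unit, the second the right) — so $\cat{Zigzag}(\cat{C})$ is a category. For the dagger, reversal $(f_1,\dots,f_n)\mapsto(f_n,\dots,f_1)$ visibly fixes every object and is an involution; it carries the first identification to the second and vice versa, so it descends to equivalence classes; and it is contravariant for juxtaposition. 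Hence $\cat{Zigzag}(\cat{C})$ is a dagger category.

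\emph{Step 2: the universal property.} Let $\eta_{\cat{C}}$ be the identity on objects and send $f\colon A\to B$ to the zigzag $A\sxto{f}B\sxfrom{\id}B$; functoriality of $\eta_{\cat{C}}$ is an instance of the first identification. Given a dagger category $\cat{D}$ and a functor $F\colon\cat{C}\to U\cat{D}$, define $\bar F\colon\cat{Zigzag}(\cat{C})\to\cat{D}$ to be the identity on objects and to send a zigzag to the composite in $\cat{D}$ obtained by applying $F$ to each rightward arrow and $F(-)^\dag$ to each leftward arrow. The one genuinely substantive point is that $\bar F$ is well-defined on equivalence classes: this follows from functoriality of $F$ together with the facts that identities are self-adjoint in $\cat{D}$ and that $(-)^\dag$ is contravariant there, which is precisely what is needed to absorb the identity arrows occurring in the two identifications. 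It is then immediate that $\bar F$ is a functor, that $U\bar F\circ\eta_{\cat{C}}=F$, and that $\bar F$ preserves the dagger: reversing a zigzag reverses the corresponding $\cat{D}$-composite and replaces each factor $F(f)$ by $F(f)^\dag$ and vice versa, which by contravariance and involutivity in $\cat{D}$ is the dagger of the original composite. For uniqueness, any dagger functor $G$ with $UG\circ\eta_{\cat{C}}=F$ is forced to send $A\sxto{f}B\sxfrom{\id}B$ to $F(f)$, and hence its dagger to $F(f)^\dag$; since every zigzag is a juxtaposition of zigzags of these two forms, $G=\bar F$. This establishes the adjunction $\cat{Zigzag}(-)\dashv U$.

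I expect the only real obstacle to be bookkeeping rather than ideas: confirming that the two identifications generate a two-sided congruence, and that composition, the dagger, and the extension $\bar F$ each respect it, together with honest attention to the small cases (one-step zigzags, trivial composites) and to keeping the alternating directions straight throughout. Alternatively, one can sidestep part of this by observing that $\cat{Zigzag}(\cat{C})$ is the pushout of $\cat{C}\leftarrow\cat{C}_0\to\cat{C}\op$ in $\cat{Cat}$, where $\cat{C}_0$ denotes the discrete category on the objects of $\cat{C}$: this pushout carries an evident dagger interchanging its two legs, and the universal property of the pushout, combined with the observation that preservation of that dagger forces one leg to be the dagger-conjugate of the other, yields the claimed adjunction directly.
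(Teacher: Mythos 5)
Your proposal is correct. The paper gives no proof of this proposition, only a citation to the construction in Heunen's thesis, and your argument is exactly the standard direct verification of what the statement describes: checking that the two identifications generate a congruence making $\cat{Zigzag}(\cat{C})$ a dagger category (with the unit laws falling out of the identifications applied to trivial composites), and establishing the universal property via the unit $f\mapsto\big(A\sxto{f}B\sxfrom{\id}B\big)$ and the forced extension sending backward arrows to daggers. Your closing observation that $\cat{Zigzag}(\cat{C})$ is the pushout of $\cat{C}\leftarrow\cat{C}_0\to\cat{C}\op$ over the discrete category of objects is also sound and gives a slicker route to the adjunction, at the cost of having to identify that pushout concretely (amalgamated free products of categories over a common object set are given by alternating words, which is essentially the same bookkeeping you do in Step 1).
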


\begin{proposition}\label{prop:cofree}
  The forgetful functor $\cat{DagCat} \to \cat{Cat}$ has a right adjoint $(-)_\leftrightarrows$,
  which sends a category $\cat{C}$ to the full subcategory of $\cat{C}\op \times \cat{C}$ with objects of the form $(A,A)$, 
  and sends a functor $F$ to the restriction of $F\op \times F$. The dagger on $\cat{C_\leftrightarrows}$ is given by $(f,g)^\dag=(g,f)$.
\end{proposition}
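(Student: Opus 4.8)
The plan is to produce, for every dagger category $\cat{D}$ and every ordinary category $\cat{C}$, a bijection $\cat{Cat}(U\cat{D},\cat{C}) \cong \cat{DagCat}(\cat{D},\cat{C}_\leftrightarrows)$ natural in both arguments, where $U\colon\cat{DagCat}\to\cat{Cat}$ is the forgetful functor. Before doing so one must check that $\cat{C}_\leftrightarrows$ really is a dagger category. A morphism $(A,A)\to(B,B)$ in $\cat{C}\op\times\cat{C}$ is a pair $(f,g)$ with $f\colon B\to A$ and $g\colon A\to B$ in $\cat{C}$, so $(f,g)^\dag:=(g,f)$ has the correct type $(B,B)\to(A,A)$; it is the identity on objects, it is visibly involutive, and a short computation unwinding composition in $\cat{C}\op\times\cat{C}$ shows it reverses composites, so it is a dagger. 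The restriction of $F\op\times F$ to diagonal objects evidently commutes with this dagger, so $(-)_\leftrightarrows$ genuinely lands in $\cat{DagCat}$.

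For the adjunction, in one direction send a dagger functor $G\colon\cat{D}\to\cat{C}_\leftrightarrows$ to the composite $\pi_2\circ G\colon U\cat{D}\to\cat{C}$, where $\pi_2\colon\cat{C}_\leftrightarrows\to\cat{C}$ is the restriction of the second projection. In the other direction, send an ordinary functor $H\colon U\cat{D}\to\cat{C}$ to the assignment $\widehat H$ given on objects by $A\mapsto(HA,HA)$ and on a morphism $f\colon A\to B$ by $f\mapsto(H(f^\dag),H(f))$; here $H(f^\dag)\colon HB\to HA$ and $H(f)\colon HA\to HB$, so $\widehat H(f)$ has the correct type in $\cat{C}_\leftrightarrows$. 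One checks $\widehat H$ is functorial, using $(g\circ f)^\dag=f^\dag\circ g^\dag$ in $\cat{D}$ together with functoriality of $H$ and the description of composition in $\cat{C}\op\times\cat{C}$, and that it is a dagger functor, since $\widehat H(f^\dag)=(H(f),H(f^\dag))=\widehat H(f)^\dag$.

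It then remains to see that $G\mapsto\pi_2 G$ and $H\mapsto\widehat H$ are mutually inverse. Starting from $H$ one recovers $\pi_2\circ\widehat H$, which is $H$ on the nose. Starting from a dagger functor $G$, set $H=\pi_2 G$; then $\widehat H$ and $G$ agree on objects because every object of $\cat{C}_\leftrightarrows$ is of the form $(X,X)$, and they agree on morphisms because if $G(f)=(a,b)$ then $b=\pi_2(G(f))=H(f)$ while $a=\pi_2(G(f)^\dag)=\pi_2(G(f^\dag))=H(f^\dag)$, the middle equality using that $G$ is a dagger functor. Naturality in $\cat{D}$ and $\cat{C}$ is a routine diagram chase; equivalently one can package the data as a unit $\eta_{\cat{D}}\colon\cat{D}\to(U\cat{D})_\leftrightarrows$, $A\mapsto(A,A)$, $f\mapsto(f^\dag,f)$, and a counit $\varepsilon_{\cat{C}}=\pi_2\colon U(\cat{C}_\leftrightarrows)\to\cat{C}$, and verify the two triangle identities.

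No step is deep. The only thing that demands genuine care is the bookkeeping of variance — remembering which coordinate of $\cat{C}\op\times\cat{C}$ is contravariant — so that the formula $f\mapsto(H(f^\dag),H(f))$ is well-typed and the composition and dagger verifications come out in the right order; everything else is a direct unwinding of definitions.
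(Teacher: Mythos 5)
Your proof is correct: the variance bookkeeping checks out (the dagger $(f,g)\mapsto(g,f)$ is well-typed and contravariant, $\widehat H$ is a well-defined dagger functor, and the two assignments are mutually inverse, with the unit $f\mapsto(f^\dag,f)$ and counit $\pi_2$ satisfying the triangle identities). The paper itself gives no proof of this proposition, merely recalling the statement from the cited thesis, and your argument is exactly the standard verification being omitted there.
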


\begin{definition}\label{def:adjointability}
  Consider a natural transformation $\sigma\colon F\Rightarrow G$ where $F,G$ are arbitrary functors with codomain a dagger category \cat{C}. Taking daggers componentwise generally does not yield a natural transformation $G\Rightarrow F$. When it does, we call $\sigma$ \emph{adjointable}. For small \cat{J}, we denote the dagger category of functors $\cat{J}\to\cat{D}$ and adjointable natural transformations by $[\cat{J},\cat{C}]_\dag$.
\end{definition}

A natural transformation $\sigma\colon F\Rightarrow G$ is adjointable if and only if $\sigma$ defines a natural transformation $\dag\circ F\Rightarrow \dag\circ G$

\begin{lemma}\label{lem:daggersubfunctorsaredagger} 
  If $G$ is a dagger functor and $\sigma\colon F\Rightarrow G$ is a natural transformation that is pointwise dagger monic, then $F$ is a dagger functor as well.
\end{lemma}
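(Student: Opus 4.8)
The statement to establish is that $F(f^\dag) = F(f)^\dag$ for every morphism $f\colon A\to B$ in the common domain of $F$ and $G$. The ingredients at hand are: the naturality squares of $\sigma$ at $f$ and at $f^\dag$, that is, $\sigma_B\circ F(f) = G(f)\circ\sigma_A$ and $\sigma_A\circ F(f^\dag) = G(f^\dag)\circ\sigma_B$; the hypothesis that $G$ is a dagger functor, so $G(f^\dag) = G(f)^\dag$; and the hypothesis that each component $\sigma_A$ is dagger monic, so $\sigma_A^\dag\sigma_A = \id[F(A)]$ (and in particular $\sigma_A$ is a split monomorphism).

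The plan is simply to transport $F(f^\dag)$ to $F(f)^\dag$ by a chain of equalities, inserting $\sigma_A^\dag\sigma_A=\id$ at the front and stripping $\sigma_B^\dag\sigma_B=\id$ off the back:
\[
  F(f^\dag)
  \;=\; \sigma_A^\dag\sigma_A\, F(f^\dag)
  \;=\; \sigma_A^\dag\, G(f^\dag)\,\sigma_B
  \;=\; \sigma_A^\dag\, G(f)^\dag\,\sigma_B
  \;=\; (G(f)\,\sigma_A)^\dag\,\sigma_B
  \;=\; (\sigma_B\, F(f))^\dag\,\sigma_B
  \;=\; F(f)^\dag\,\sigma_B^\dag\sigma_B
  \;=\; F(f)^\dag .
\]
Here the second equality is naturality of $\sigma$ at $f^\dag$, the third is that $G$ preserves the dagger, the fifth is naturality of $\sigma$ at $f$ (applied inside the dagger), and the outermost equalities use that $\sigma_A$ and $\sigma_B$ are isometries. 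One also checks that $F$ preserves identities and composition, which is automatic since $\sigma$ is a natural transformation and its components are monic; so $F$ is a functor, and the displayed chain upgrades it to a dagger functor.

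There is no deep obstacle here; the only genuine point of care is that one must \emph{not} use $\sigma_A\sigma_A^\dag=\id$, since a pointwise dagger monic transformation need not be pointwise unitary and $\sigma_A\sigma_A^\dag$ is merely a projection. The argument above is arranged to use only $\sigma^\dag\sigma=\id$, so it goes through. If one prefers a more diagrammatic presentation, an equivalent route is to compose the daggered naturality square at $f$ with $\sigma_A$ on the left and $\sigma_B$ on the right, use $\sigma_B^\dag\sigma_B=\id$ on one side and naturality at $f^\dag$ together with $\sigma_A^\dag\sigma_A=\id$ on the other, obtaining $\sigma_A\circ F(f)^\dag = \sigma_A\circ F(f^\dag)$, and then cancel the split monomorphism $\sigma_A$.
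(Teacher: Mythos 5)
Your proof is correct and is essentially the paper's argument written out equationally rather than diagrammatically: both rest on the two naturality squares at $f$ and $f^\dag$, the identity $G(f^\dag)=G(f)^\dag$, and the isometry equations $\sigma_A^\dag\sigma_A=\id$ and $\sigma_B^\dag\sigma_B=\id$, and both correctly avoid ever needing $\sigma\sigma^\dag=\id$. (The closing remark about verifying that $F$ preserves identities and composition is unnecessary, since $F$ is already a functor by hypothesis.)
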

\begin{proof}
  As $\sigma$ is pointwise dagger monic and natural, the following diagram commutes for any $f$:
  \[\begin{tikzpicture}
   	\matrix (m) [matrix of math nodes,row sep=2em,column sep=4em,minimum width=2em]
   	{
    	FA & FB & \\
    	GA & GB \\};
   	\path[->]
   	(m-1-1) edge[dagger,>->] node [left] {$\sigma$} (m-2-1)
      	     edge node [above] {$F(f)$} (m-1-2)
   	(m-2-1) edge node [below] {$G(f)$} (m-2-2)
   	(m-2-2) edge[dagger,->>] node [right] {$\sigma^\dag$} (m-1-2);
  \end{tikzpicture}\]
  Taking the dagger of this diagram, and replacing $f$ with $f^\dag$, respectively, results in two commuting diagrams:
  \[\begin{tikzpicture}
  	\matrix (m) [matrix of math nodes,row sep=2em,column sep=4em,minimum width=2em]
   	{
    	FB & FA & \\
    	GB & GA \\};
   	\path[->]
   	(m-1-1) edge[dagger,>->] node [left] {$\sigma$} (m-2-1)
           edge node [above] {$F(f)^\dag$} (m-1-2)
   	(m-2-1) edge node [below] {$G(f)^\dag$} (m-2-2)
   	(m-2-2) edge[dagger,->>] node [right] {$\sigma^\dag$} (m-1-2);
  	\end{tikzpicture}
  	\quad
   	\begin{tikzpicture}
  	\matrix (m) [matrix of math nodes,row sep=2em,column sep=4em,minimum width=2em]
  	{
   	FB & FA & \\
   	GB & GA \\};
  	\path[->]
  	(m-1-1) edge[dagger,>->] node [left] {$\sigma$} (m-2-1)
     	     edge node [above] {$F(f^\dag)$} (m-1-2)
  	(m-2-1) edge node [below] {$G(f^\dag)$} (m-2-2)
  	(m-2-2) edge[dagger,->>] node [right] {$\sigma^\dag$} (m-1-2);
  \end{tikzpicture}\]
  Because $G$ is a dagger functor, the bottom arrows in both diagrams are equal. Therefore top arrows are equal too, making $F$ a dagger functor.
\end{proof}

\section{Dagger limits}\label{sec:daglims}

Before we define dagger limits formally, let us look at few examples to motivate the definition. If \cat{1} is the terminal category, then a functor $D\colon \cat{1}\to \cat{Hilb}$ is given by a choice of an object $H$. Now, a limit of $D$ is is given by a an object $H'$ and an isomorphism $f\colon H'\to H$. However, for a limit of $D$ to qualify as the dagger limit of $D$, one would expect the isomorphism $f$ to be unitary. However, for some diagrams it seems as if one needs to make choices: a limit of $2\times -\colon \mathbb{C}\leftrightarrows\mathbb{C}\colon-/2$ is given by an isomorphism to one (and hence to both) copies of $\mathbb{C}$. For a dagger limit of the same diagram, we can require one of the isomorphisms to be unitary but not both -- which copy of $\mathbb{C}$ one should prefer? 

We will discuss more instructive examples below, but already it seems like dagger limits have something to do with ``normalization'', and one can conceivably choose to normalize at different locations. To keep track of these choices, we build them in the definition below, which is the basic object of study in this chapter. The rest of this section illustrates it.

\begin{definition}\label{def:daglim}
  Let \cat{C} be a dagger category and \cat{J} a category. A class $\Omega$ of objects of \cat{J} is \emph{weakly initial} if for every object $B$ of \cat{J} there is a morphism $f\colon A\to B$ with $A\in\Omega$, \ie if every object of \cat{J} can be reached from $\Omega$. 
  Let $D\colon\cat{J}\to\cat{C}$ be a diagram and let $\Omega\subseteq \cat{J}$ be weakly initial. \emph{A dagger limit of $(D,\Omega)$} is a limit $L$ of $D$ whose cone $l_A \colon L \to D(A)$ satisfies the following two properties:
  \begin{description}
    \item[normalization] $l_A$ is a partial isometry for every $A \in \Omega$;
    \item[independence] the projections on $L$ induced by these partial isometries commute, \ie $l_A^\dag l_A l_B^\dag l_B=l_B^\dag l_Bl_A^\dag l_A$ for all $A,B\in \Omega$.
  \end{description}
  A dagger limit of $D$ is a dagger limit of $(D,\Omega)$, for some weakly initial $\Omega$.
  If $L$ is a dagger limit of $(D,\Omega)$, we will also write $L=\dlim^\Omega D$. For a fixed $\cat{J}$ and $\Omega$, if $(D,\Omega)$ has a dagger limit in \cat{C} for every $D$, we will say that \cat{C} has all $(\cat{J},\Omega)$-shaped limits.
\end{definition}

Note that if $L$ is a dagger limit of $(D,\Omega)$ and $\Psi\subset\Omega$ is weakly initial, $L$ is also a dagger limit of $(D,\Psi)$. Moreover, if $L$ is a dagger limit of $(D,\Psi)$ and $(D,\Omega)$ also has a dagger limit, Theorem~\ref{thm:daglimsunique} will imply that $L$ is also a dagger limit of $(D,\Omega)$.

\begin{example}\label{ex:concretedaggerlimits}
  Definition~\ref{def:daglim} subsumes various concrete dagger limits from the literature:
  \begin{itemize}
    \item A terminal object is a limit of the unique functor $\emptyset\to\cat{C}$. As the empty category has no objects, being a dagger limit of $\emptyset\to\cat{C}$ says nothing more than being terminal. In a dagger category, any terminal object is automatically a zero object.

    \item A \emph{dagger product} of objects $A$ and $B$ in a dagger category with a zero object is traditionally defined~\cite{selinger:completelypositive} to be a product $A \times B$ with projections $p_A \colon A \times B \to A$ and $p_B \colon A \times B \to B$ satisfying $p_A p_A^\dag = \id[A]$, $p_B p_B^\dag = \id[B]$, $p_A p_B^\dag = 0$, and $p_B p_A^\dag = 0$.
    This is precisely a dagger limit of $(D,\Omega)$, where $\cat{J}$ is the discrete category on two objects that $D$ sends to $A$ and $B$, and $\Omega$ necessarily consists of both objects of $\cat{J}$; see also Example~\ref{ex:biproducts} below.

    \item A \emph{dagger equalizer} of morphisms $f,g \colon A \to B$ in a dagger category is traditionally defined~\cite{vicary2011completeness} to be an equalizer $e \colon E \to A$ that is dagger monic.
    This is precisely a dagger limit, where $\cat{J} = \bullet \rightrightarrows \bullet$ which $D$ sends to $f$ and $g$, and $\Omega$ consists of only the first object, which gets sent to $A$. 

    This example justifies why Definition~\ref{def:daglim} cannot require $\Omega$ to be all of $\cat{J}$ in general, otherwise there would be many pairs $f,g$ that have a dagger equalizer in the traditional sense but not in the sense of Definition~\ref{def:daglim}.

    \item A \emph{dagger kernel} of a morphism $f \colon A \to B$ in a dagger category with a zero object is traditionally defined~\cite{heunenjacobs:daggerkernels} to be a kernel $k \colon K \to A$ that is dagger monic. As a special case of a dagger equalizer it is a dagger limit.

    \item A \emph{dagger intersection} of dagger monomorphisms $f_i \colon A_i \to B$ in a dagger category is traditionally defined~\cite{vicary2011completeness} to be a (wide) pullback $P$ such that each leg $p_i \colon P \to A_i$ of the cone is dagger monic.
    This is precisely a dagger limit, where $\Omega$ consists of all the objects of $\cat{J}$ getting mapped to $A_i$. Since pullback of monics are monic, each $p_i$ is not only a partial isometry but also a monomorphism, and hence a dagger monomorphism.

    \item If $p\colon A\to A$ is a projection, a \emph{dagger splitting of $p$} is a dagger monic $i\colon I\to A$ such that $p=ii^\dag$~\cite{selinger2008idempotents}. A dagger splitting of $p$ can be seen as the dagger limit of the diagram generated by $p$. 
    More precisely, we can take $\Omega=\{A\}$, by definition $i$ is a partial isometry, and if $l \colon L \to A$ is another limit, then $m=i^\dag l$ is the unique map satisfying $l=im$.
    Conversely, suppose that $l \colon L \to A$ is a dagger limit. Then $l$ is a partial isometry, and so the cone $l$ factors through itself via both $l^\dag l$ and $\id[L]$; but since mediating maps are unique these must be equal, and so $l$ is dagger monic. Similarly, because $p$ is idempotent, $p$ gives a cone, which factors through $l$. This implies $l l^\dag p=p$. Taking daggers we see that $p=pll^\dag=ll^\dag$ since $pl=l$. 
    We say that \cat{C} has dagger splittings of projections if every projection has a dagger splitting. 
  \end{itemize}
\end{example}

\begin{example}\label{ex:daggershaped}
  Let \cat{J} be a dagger category, and $D\colon \cat{J}\to\cat{C}$ a dagger functor. Any leg $l_A \colon L \to D(A)$ of a cone is a partial isometry if and only if any other leg $l_B \colon L \to D(B)$ in the same connected component is. To see this, fix a morphism $f \colon A \to B$ in $\cat{J}$, and assume that $l_{A}$ is a partial isometry. 
  \begin{align*} 
    l_Bl_B^\dag l_B
    &=D(f) l_A l_A^\dag  D(f)^\dag D(f)l_A && \text{ as }L\text{ is a cone}\\
    &=D(f) l_A l_A^\dag  D(f^\dag f)l_A && \text{ as }D\text{ is a dagger functor}\\
    &=D(f) l_A l_A^\dag l_A && \text{ as }L\text{ is a cone}\\
    &=D(f) l_A && \text{ as }l_A\text{ is a partial isometry} \\
    &=l_B  && \text{ as }L\text{ is a cone}
  \end{align*}
  Similarly $l_A^\dag l_A=l_B^\dag l_B$ for any objects $A$ and $B$ in the same connected component of \cat{J}. These two facts imply that whenever $D$ is a dagger functor, the choice of the parameter $\Omega$ doesn't matter when speaking about dagger limits, as the resulting equations are equivalent. Hence whenever $D$ is a dagger functor we omit $\Omega$, and call a dagger limit of $D$ a \emph{dagger-shaped limit}. In particular, whenever every dagger functor $\cat{J}\to\cat{C}$ has a dagger limit, we will say that \cat{C} has \cat{J}-shaped limits. If \cat{C} has \cat{J}-shaped limits for every small dagger category \cat{J}, we will say that \cat{C} has dagger-shaped limits. If we wish to say something about dagger limits of  functors $\cat{J}\to\cat{C}$ that don't necessarily preserve the dagger, we will make it clear by not omitting $\Omega$.
  \begin{itemize}
    \item Any discrete category has a unique dagger, which is always preserved by maps into dagger categories. Thus dagger products can be seen as dagger-shaped limits. 

    \item Any dagger splitting of a projection is a dagger-shaped limit, as in Example~\ref{ex:concretedaggerlimits}.

    \item We say that a dagger category has \emph{dagger split infima of projections} if, whenever $\mathcal{P}$ is a family of projections on a single object $A$, it has an infimum admitting a dagger splitting, \ie a dagger subobject $K\rightarrowtail A$ such that the induced projection is the infimum of $\mathcal{P}$. Limits of projections can be defined as dagger-shaped limits: consider the monoid  freely generated by a set of idempotents; the dagger on the monoid fixes those idempotents, and reverses words in them. However, we prefer to think of them instead in terms of the partial order on projections. It is not hard to show that the dagger intersection of a family of dagger monics $m_i\colon A_i\to A$ coincides with the dagger limit of the projections $m_im_i^\dag\colon A\to A_i\to A$.

    \item We say that a dagger category has \emph{dagger stabilizers} when it has all $\cat{ZigZag(E)}$-shaped limits, where \cat{E} is the equalizer shape and $\cat{ZigZag}(E)$ is the free dagger category on \cat{E} from Proposition~\ref{prop:free}. Concretely, a dagger functor with domain $\cat{ZigZag(E)}$ is uniquely determined by where it sends $E$, \ie by a choice of a parallel morphisms $f,g\colon A\rightrightarrows B$ in the target category \cat{C}. A cone for such a functor consists of an object $X$ with maps $p_A\colon X\to A$ and $p_B$ satisfying $fp_A=gp_A=p_B$ \emph{and} $f^\dag p_B=g^\dag p_B=p_A$. A dagger stabilizer of $f$ and $g$ is then a terminal such cone that also satisfies normalization and independence. Hence the dagger stabilizer of $f$ and $g$ is \emph{not} in general a dagger equalizer. For example, the (dagger) kernel of a linear map $f\colon A\to B$ in \cat{FHilb} can be computed as the equalizer of $f$ and $0$, whereas the dagger stabilizer of $f$ and $0$ is always $0$.
  \end{itemize}
\end{example}

Recall that a dagger category is \emph{connected} if every hom-set is inhabited. 
\begin{proposition}\label{prop:variouscatshavedagshapedlims}
  The dagger categories $\cat{Rel}$, $\cat{FinRel}$, $\cat{PInj}$, and $\cat{Span(FinSet)}$ have \cat{J}-shaped limits for any small connected dagger category \cat{J}.
\end{proposition}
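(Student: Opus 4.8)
The plan is to exhibit, for each of the four categories and each dagger functor $D\colon\cat{J}\to\cat{C}$ with $\cat{J}$ connected, an explicit ordinary limit of $D$ and then check that one leg of its limiting cone is a partial isometry. This is enough: by Example~\ref{ex:daggershaped} the choice of weakly initial $\Omega$ is irrelevant, all legs of a cone become partial isometries as soon as one of them is, and independence comes for free (the projections $l_A^\dag l_A$ all coincide, so they trivially commute). So only \textbf{normalization} needs attention, and for \cat{Rel}, \cat{FinRel} and \cat{PInj} I would in fact obtain dagger monomorphisms.

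The common device is the \emph{category of elements} $\int\!D$: objects are pairs $(A,x)$ with $A$ in \cat{J} and $x\in D(A)$, and every $\cat{J}$-morphism $f\colon A\to B$ contributes a morphism $(A,x)\to(B,y)$ whenever $(x,y)\in D(f)$; because $D$ is a dagger functor, $\int\!D$ is itself a dagger category, fibred over \cat{J}. Call a connected component $c$ of $\int\!D$ \emph{saturated} if every $\cat{J}$-morphism $f\colon A\to B$ can be pushed forward out of every object of $c$ lying over $A$; connectedness of \cat{J} then makes $c$ meet $D(A)$ for all $A$. For $\cat{C}=\cat{Rel}$ I would let $L$ be the set of saturated components and $\lambda_A\colon L\to D(A)$ relate $c$ to $x$ precisely when $(A,x)\in c$. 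Checking this is a cone is routine: $D(f)\lambda_A\subseteq\lambda_B$ because components are closed under $\int\!D$-morphisms, and $\lambda_B\subseteq D(f)\lambda_A$ by saturation applied to $f^\dag$. For the universal property, given a cone $(Y,(p_A))$ one shows that $c_y:=\{(A,x)\mid y\mathrel{p_A}x\}$ is a union of connected components, and — the key step — that each such component is saturated, since the cone equation at $f^\dag$ together with the dagger of \cat{J} forces every $x$ in $p_A(y)$ to have a $D(f)$-image in $p_B(y)$; then $y\mapsto\{c\in L\mid c\subseteq c_y\}$ is the unique mediating morphism. Normalization holds because disjointness of components yields $\lambda_A\lambda_A^\dag\lambda_A\subseteq\lambda_A$, the reverse inclusion being automatic; in fact $\lambda_A^\dag\lambda_A=\id[L]$ since \cat{Rel} has idempotent addition, so $\lambda_A$ is dagger monic. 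The argument for \cat{FinRel} is identical once one notes that $L$ is finite: every saturated component meets $D(A_0)$ for a fixed object $A_0$, distinct ones in disjoint nonempty sets, so $\abs{L}\le\abs{D(A_0)}$, and a full subcategory inherits whatever limits happen to land in it.

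For \cat{PInj}, where each $D(f)$ is a partial function, these components degenerate to \emph{compatible families}: I would take $L$ to be the set of families $(x_A)_A$ with $x_A\in D(A)$ and $D(f)(x_A)=x_B$ for all $f\colon A\to B$, and $\lambda_A$ the total injection $(x_B)_B\mapsto x_A$. The only real lemma is that in any cone $(Y,(p_A))$ all the $p_A$ share one domain: $D(f)\circ p_A=p_B$ gives $\mathrm{dom}(p_B)\subseteq\mathrm{dom}(p_A)$, applying this to $f^\dag$ (using the dagger of \cat{J}) gives the reverse inclusion, and connectedness equates all the domains; each point of the common domain then names a compatible family, and uniqueness of the mediating map is immediate from injectivity of the $p_A$. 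Finally, for $\cat{Span}(\cat{FinSet})$ I would rerun the category-of-elements construction while tracking the multiplicities of the spans $D(f)$ (working with elements of the apexes, not just the related pairs). I expect the main obstacle of the whole proof to lie exactly here: verifying \textbf{normalization} in $\cat{Span}(\cat{FinSet})$, where one must ensure that the non-idempotency of these multiplicities does not prevent the limiting legs from being partial isometries — in the other three categories normalization drops out of the construction essentially for free, but here it is the crux. Everything else reduces to the routine diagram chases indicated above, using only that \cat{J} carries a dagger and that $D$ preserves it.
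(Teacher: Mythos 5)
Your treatment of \cat{Rel}, \cat{FinRel} and \cat{PInj} is correct and is essentially the paper's own argument in different clothing: your saturated components of the category of elements are exactly the paper's path components, with no $D$-endpoints, of the graph on $\coprod_A D(A)$; the finiteness observation for \cat{FinRel} is the same; and the compatible-families construction for \cat{PInj} is identical. The reduction of normalization and independence to a single leg via Example~\ref{ex:daggershaped} is also legitimate.

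The gap is $\cat{Span}(\cat{FinSet})$. You leave that case as a sketch and flag normalization as the open crux, but the real problem sits upstream: the component construction, however you track multiplicities, does not produce the limit. Take $\cat{J}$ to be the free dagger category on a single arrow $g\colon A\to B$ (which is connected) and let $D(g)$ be the span $1\leftarrow 2\to 1$. The category of elements then has a single saturated component, yet the limit of $D$ is the empty set: a cone $(X,p_A,p_B)$ must satisfy $2p_A=p_B$ and $2p_B=p_A$ over $\mathbb{N}$, forcing $p_A=p_B=0$. So a saturated component must sometimes be discarded outright rather than retained with some multiplicity, and no assignment of multiplicities to components repairs this. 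The paper's construction instead mimics \cat{PInj}: $L$ is the set of families $(x_A)_{A}$ with $D(f)(x_A,z)=\delta_{z,x_B}$ for every $f\colon A\to B$, i.e.\ every transition has multiplicity exactly one, and the universal property is a counting argument: if a cone $R_A$ has $R_A(y,x_A)=n>0$, then $D(f^\dag f)R_A=R_A$ forces $x_A$ to have a unique successor reached with multiplicity one, since any branching or multiplicity greater than one would make the $(y,x_A)$ entry exceed $n$. Once $L$ is defined this way, normalization is not the hard part at all --- it follows from the uniqueness of the extension of $x_A$ to a family, exactly as in \cat{PInj}. So the fourth case needs a genuinely different $L$ and a new universal-property argument, neither of which your sketch supplies.
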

\begin{proof} 
  Let $D\colon \cat{J}\to \cat{Rel}$ be a dagger functor; we will construct a dagger limit. Write $G$ for the (undirected multi-)graph with vertices $V=\coprod_{A\in\cat{J}} D(A)$ and edges $E=\coprod_{f\in\cat{J}}D(f)$. Call a vertex $a\in D(A)\subseteq V$ a \emph{$D$-endpoint} if $D(f)a=\emptyset$ for some $f\colon A\to B$ in \cat{J}. Set
  \[
    L=\{X\in \mathcal{P}(V)\mid X\text{ is a path component of }\mathcal{G}\text{ with no }D\text{-endpoints}\}\text,
  \]
  and define $l_A\colon L\to D(A)$ by $l_A(X)=X\cap D(A)$. We will prove that this is a dagger limit of $D$, starting with normalization and independence. First we show that $l_A(X)\neq \emptyset$ for any $A\in\cat{J}$ and $X\in L$. Pick an element $b\in X$, say $b\in D(B)$, and choose some $f\colon B\to A$. Since $x$ is not a $D$-endpoint, $D(f)x$ is nonempty and contained in $X\cap A=l_A(X)$. Because path components of a graph are disjoint, $l_A^\dag l_A (X)=X$ for all $X$. Hence $l_A^\dag l_A=\id[L]$ for all $A$, establishing normalization and independence.

  Next we verify that $l_A$ forms a cone. Path components are closed under taking neighbours, so $D(f)l_A(X)\subseteq D_B(X)$. To see the other inclusion, let $b\in D(B)(X)=X\cap B$. Again $b$ isn't a $D$-endpoint, so $D(f)^\dag (b)=D(f^\dag) (b)\subseteq L_A(X)$ is nonempty. Any element of $D(f)^\dag (b)$ is related to $b$ by $D(f)$. Hence $b\in D(f)l_A(X)$, so $D(f)l_A=l_B$ as desired.

  Finally, we verify that the cone $l_A$ is limiting. Let $R_A\colon Y\to D(A)$ be any cone. If $x\in D(A)$ is a $D$-endpoint, say $D(f)x=\emptyset$, then 
  \[
    x\notin D(f^\dag)D(f)D(A)=D(f^\dag f)D(A)\supseteq D(f^\dag f)R_A(y)=R_A(y)\text.
  \]
  Hence no $R_A(y)$ contains $D$-endpoints. Moreover, if $D(f)R_A(y)=R_B(y)$ for all $f$, then the set $\coprod_{A\in\cat{J}}(R_A(y))\subseteq V$ is closed under taking neighbours in $G$. As it contains no $D$-endpoints, it is a union of a set of connected components of $\mathcal{G}$ without $D$-endpoints. Mapping $y$ to this set of connected components, \ie\ to a subset of $L$, defines the unique relation $R\colon Y\to L$ satisfying $R_A=l_A\circ R$ for all $A\in\cat{J}$.

  The same construction works for \cat{FinRel}:  one merely needs to check that $L$ is finite whenever each $D(A)$ is. In fact $L$ is finite if at least one $D(A)$ is, since $l_A^\dag l_A=\id[L]$ implies that the function $L\to \mathcal{P}(D(A))$ corresponding to $l_A$ is injective.

  Now consider a dagger functor $D\colon\cat{J}\to\cat{PInj}$ and set
  \begin{align*}
    L&=\{(x_A)_{A\in\cat{J}}\in \prod_{A\in J}D(A)\mid D(f)x_A=x_B\text{ for every }f\colon A\to B\}\text, \\
    l_A((x_A)_{A\in\cat{J}})&=x_A\text.
  \end{align*}
  It is easy to verify that this forms a dagger limit. 

  Dagger limits of a dagger functor $D\colon\cat{J}\to\cat{Span(FinSet)}$ resemble the case of \cat{PInj} more than \cat{(Fin)Rel}.  Think of (the isomorphism class of) a span $A\leftarrow \bullet\to B$ of finite sets as a matrix $R\colon A\times B\to\mathbb{N}$ with natural number entries, so that a morphism $f\colon A\to B$ in \cat{J} maps to $D(f)\colon D(A)\times D(B)\to\mathbb{N}$. Set 
    \begin{align*}
    L&=\{(x_A)_{A\in\cat{J}}\in \prod_{A\in J}D(A)\mid D(f)(x_A,z)=\delta_{z,x_B}
      \text{ for every }f\colon A\to B\}\text, \\
    l_A((x_A)_{A\in\cat{J}},z)&=\delta_{z,x_A}\text.
  \end{align*}
  It is easy to see $l_A$ forms a cone. To see that it is limiting, let $R_A\colon Y\to D(A)$ be any cone, and pick $y\in Y$.  Consider $x_A\in R_A$ such that $R_A(y,x_A)=n\neq 0$. We will show that if $B\in \cat{J}$ and $f\colon A\to B$, then $D(f)(x_A,z)=\delta_{z,x_B}$ for some unique $x_B$, so that $x_A$ extends to a unique family $(x_A)_{A\in\cat{J}}\in L$. Consider an arbitrary $f\colon A\to B$. Now $D(f^\dag f) R_A=R_A$, so there has to be some $x_B\in B$ with $D(f)(x_A,x_B)>0.$ If there were several such $x_B$ or if $D(f)(x_A,x_B)>1$, then $D(f^\dag f)R_A(y,x_A)>n$, which is a contradiction. Hence $x_A$ extends uniquely to a family $(x_A)_{A\in\cat{J}}\in L$. Moreover, if $R_B=D(f)R_A$, then $R_B(y,x_B)=n$ as well. Hence we can define $R\colon Y\to L$ by setting $R(y,(x_A)_{A\in\cat{J}})=R_A(y,x_A)$. Now $R$ satisfies $l_BR=R_B$ for each $B$ and it is clearly unique as such.
\end{proof}

Note that this theorem fails for \cat{Span(Set)}, since idempotents do not always split. For instance, the idempotent $1\leftarrow \mathbb{N}\to 1$ does not admit a splitting. We leave open the question of characterizing exactly which categories of spans or relations admit connected dagger-shaped limits.

The following example illustrates the name `independence axiom' in Definition~\ref{def:daglim}.

\begin{example} 
  When working in \cat{FHilb}, consider $\mathbb{C}^2$ as the sum of two non-orthogonal lines, \eg the ones spanned by $\lvert 0 \rangle$ and $\lvert + \rangle$. Projections to these two lines will give rise to two maps $p_1,p_2\colon \mathbb{C}^2\to \mathbb{C}$ making $(\mathbb{C}^2,p_1,p_2)$ into a categorical product. Moreover, $p_1$ and $p_2$ are partial isometries so that the normalization axiom is satisfied. However, the independence axiom fails, and indeed, $(\mathbb{C}^2,p_1,p_2)$ fails to be a dagger product. In other words, the limit structure $(\mathbb{C}^2,p_1,p_2)$ and the colimit structure $(\mathbb{C}^2,p_1^\dag,p_2^\dag)$ are not compatible.
\end{example}

\begin{example}\label{ex:cofree}
  Sometimes in ordinary category theory an object is both a limit and a colimit ``in a compatible way'' to a pair of related diagrams in \cat{C}. Usually this is formulated in terms of a canonical morphism from the colimit to the limit being an isomorphism, but in some cases one can instead formulate them as dagger limits in $\cat{C_\leftrightarrows}$, the cofree dagger category from proposition \ref{prop:cofree}. Moreover, if \cat{C} has zero morphisms\footnote{This is so that one can give every cone a trivial cocone structure and vice versa.}, then dagger limits in $\cat{C_\leftrightarrows}$  give rise to such ``ambilimits '' in \cat{C}. 
  \begin{itemize}
    \item If \cat{C} has zero morphisms, then a biproduct in \cat{C} is the same thing as a dagger product in \cat{C_\leftrightarrows}. 

    \item Idempotents in \cat{C} split if and only if (dagger) projections in $\cat{C_\leftrightarrows}$ have dagger splittings.

    \item A more interesting example comes from domain theory, where there is an important limit-colimit coincidence. Regard the partially ordered set $(\mathbb{N},\leq)$ of natural numbers as a category $\omega$, and let $D\colon \omega\to \cat{DCPO}$ be a chain of embeddings, \ie each map in $\omega$ is mapped to an embedding by $D$. Then the embeddings define unique projections, resulting in a chain of projections $D^*\colon\omega\op\to \cat{DCPO}$. A fundamental fact in domain theory~\cite[3.3.2]{abramsky1994domain} is that the colimit of $D$ coincides with the limit of $D^*$. One can go through the construction and show that this ``ambilimit'' can equivalently be described as the dagger limit of $(D,D^*,\omega)$, where $(D^*,D)\colon\omega\op\to\cat{DCPO}_\leftrightarrows$.
  \end{itemize}
  This viewpoint is developed in more detail in~\cite[Chapter 5]{karvonen:thesis}.
\end{example}

\begin{example}
  In an inverse category the normalization and independence axioms of Definition~\ref{def:daglim} are automatically satisfied, and hence dagger limits are simply limits.
\end{example}

\begin{example} 
  Fix $\cat{J}$ and a weakly initial $\Omega$. If $\cat{C}$ has $(\cat{J},\Omega)$-shaped limits, so does $[\cat{D},\cat{C}]$.
  Let $D \colon \cat{J} \to [\cat{D},\cat{C}]$ be a diagram.
  For each $X \in \cat{D}$, there is a dagger limit $L(X) = \dlim^\Omega D(-)(X)$ with cone $l_A^X \colon L(X) \to D(A)(X)$. 
  For $f \colon X \to Y$ in $\cat{D}$, there is a cone $D(A)(f) \circ l_A^X \colon L(X) \to D(A)(Y)$, and hence a unique map $L(f) \colon L(X) \to L(Y)$ satisfying $l^Y_A \circ L(f) = D(A)(f) \circ l^X_A$. 
  The resulting functor $L \colon \cat{D} \to \cat{C}$ is a limit of $D$, with cone $l_A^X \colon L(X) \to D(A)(X)$.
  This limit is in fact a dagger limit $(D,\Omega)$, because the normalization and independence axioms hold for each component $l_A^X$, and the dagger in $[\cat{D},\cat{C}]$ is computed componentwise.
\end{example}

We end this section by recording how dagger functors interact with dagger limits.
Any dagger functor preserves dagger limits as soon as it preserves limits.
The same holds for reflection and creation of (dagger) limits when the functor is faithful.

\begin{lemma}
  Let $\cat{F} \colon \cat{C} \to \cat{D}$ be a dagger functor. If $F$ preserves limits of type $\cat{J}$, then it preserves $(\cat{J},\Omega)$-shaped dagger limits. If $F$ reflects (creates) limits of type \cat{J}, then it reflects (creates) $(\cat{J},\Omega)$-shaped dagger limits.
\end{lemma}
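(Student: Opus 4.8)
The plan is to transport the two extra axioms across $F$: for preservation I push the defining equations forward using functoriality and $F(g^\dag)=F(g)^\dag$, and for reflection and creation I pull them back, cancelling $F$ by faithfulness (the extra hypothesis flagged in the text just before the statement). Note throughout that $F$ does not touch the shape $\cat{J}$, so the weakly initial class $\Omega\subseteq\cat{J}$ plays exactly the same role on both sides.

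\emph{Preservation.} Let $L=\dlim^\Omega D$ with limiting cone $l_A\colon L\to D(A)$. Since $F$ preserves limits of type $\cat{J}$, the cone $F(l_A)\colon F(L)\to F(D(A))$ is a limit of $F\circ D$, so it suffices to check \textbf{normalization} and \textbf{independence} for it. For $A\in\Omega$, applying $F$ to $l_A=l_Al_A^\dag l_A$ gives $F(l_A)=F(l_A)F(l_A)^\dag F(l_A)$, so $F(l_A)$ is a partial isometry; and applying $F$ to $l_A^\dag l_Al_B^\dag l_B=l_B^\dag l_Bl_A^\dag l_A$ for $A,B\in\Omega$, together with $F(g^\dag)=F(g)^\dag$, gives the corresponding commutation of projections on $F(L)$. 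Hence $F(L)$ is a dagger limit of $(F\circ D,\Omega)$.

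\emph{Reflection and creation.} Assume $F$ reflects limits of type $\cat{J}$ and is faithful, and let $l_A\colon L\to D(A)$ be a cone whose image $F(l_A)$ is a dagger limit of $(F\circ D,\Omega)$. Being a dagger limit it is in particular a limit cone, so by reflection $l_A$ is a limit cone for $D$. Normalization of the image reads $F(l_A)=F(l_A)F(l_A)^\dag F(l_A)=F(l_Al_A^\dag l_A)$ for $A\in\Omega$, so faithfulness forces $l_A=l_Al_A^\dag l_A$; likewise independence of the image is $F(l_A^\dag l_Al_B^\dag l_B)=F(l_B^\dag l_Bl_A^\dag l_A)$, and faithfulness forces $l_A^\dag l_Al_B^\dag l_B=l_B^\dag l_Bl_A^\dag l_A$. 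So $l_A$ is a dagger limit of $(D,\Omega)$. For creation: given $D$ and a dagger limit $\ell_A$ of $(F\circ D,\Omega)$, it is an ordinary limit of $F\circ D$, so creation of $\cat{J}$-limits yields the unique cone $l_A$ over $D$ lifting $\ell_A$, and it is limiting; the cancellation argument just made shows it also satisfies normalization and independence, hence is a dagger limit of $(D,\Omega)$, and it is already the unique lift of $\ell_A$ at the level of cones.

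\emph{Main obstacle.} There is no real obstacle: each clause is a one-line application of functoriality plus the corresponding ordinary property. The only things to get right are bookkeeping --- that $\Omega\subseteq\cat{J}$ is literally the same weakly initial class before and after applying $F$, and that one fixes a precise meaning of ``creates limits'' so the unique-lifting clause is discharged cleanly --- and the observation that faithfulness is exactly what is needed in the reflection and creation cases to cancel $F$ from the normalization and independence equations, since applying $F$ alone only recovers these equations in $\cat{D}$.
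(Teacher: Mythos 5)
Your proof is correct and follows exactly the paper's (much terser) argument: dagger functors preserve the equations $l_A=l_Al_A^\dag l_A$ and $l_A^\dag l_Al_B^\dag l_B=l_B^\dag l_Bl_A^\dag l_A$ because they commute with composition and dagger, and faithful dagger functors reflect them. The only point worth noting is that you correctly identified faithfulness as the hypothesis needed for the reflection and creation clauses, which the paper states in the sentence preceding the lemma rather than in the lemma itself.
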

\begin{proof}
  All dagger functors preserve partial isometries and commutativity of projections, and faithful dagger functors also reflect these. 
\end{proof}

\section{Uniqueness up to unitary isomorphism}\label{sec:unitarity}

\begin{theorem}\label{thm:daglimsunique}
  Let $D\colon\cat{J}\to \cat{C}$ be a diagram and $\Omega\subseteq\cat{J}$ be weakly initial. Let $L$ be a dagger limit of $(D,\Omega)$ and let $M$ be a limit of $D$. The canonical isomorphism of cones $L\to M$ is unitary iff $M$ is a dagger limit of $(D,\Omega)$. In particular, the dagger limit of $(D,\Omega)$ is defined up to unitary isomorphism.
\end{theorem}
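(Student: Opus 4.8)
The uniqueness statement at the end is a formal consequence of the biconditional: given two dagger limits $L$ and $M$ of $(D,\Omega)$, apply the biconditional with this $M$ to see that the canonical isomorphism of cones $L\to M$ is unitary. So I would spend all the effort on the biconditional. Throughout, write $l_A\colon L\to D(A)$ for the cone of the dagger limit $L$, $m_A\colon M\to D(A)$ for the cone of the limit $M$, and $u\colon L\to M$ for the unique isomorphism of cones, so $m_Au=l_A$ for every $A$.

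For the forward implication, assume $u$ is unitary, so $m_A=l_Au^{-1}=l_Au^\dag$. Then for $A\in\Omega$ normalization for $M$ is the one-line computation $m_Am_A^\dag m_A=l_Au^\dag u l_A^\dag l_A u^\dag=l_A l_A^\dag l_A u^\dag=l_A u^\dag=m_A$ (using $u^\dag u=\id[L]$ and that $l_A$ is a partial isometry), and independence for $M$ follows since $m_A^\dag m_A=u\,l_A^\dag l_A\,u^\dag$, so that $m_A^\dag m_A\,m_B^\dag m_B=u\,l_A^\dag l_A l_B^\dag l_B\,u^\dag$ is symmetric in $A,B$ by independence for $L$.

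The reverse implication is where the real content is. Assume $M$ is a dagger limit of $(D,\Omega)$. Since $u$ is already invertible it is enough to prove $uu^\dag=\id[M]$, for then $u^\dag=u^{-1}$ and $u$ is unitary. Set $r:=uu^\dag$, which is positive (it equals $(u^\dag)^\dag u^\dag$) and invertible, and for $A\in\Omega$ put $q_A:=m_A^\dag m_A$, a projection since $m_A$ is a partial isometry; the $q_A$ commute pairwise by independence for $M$. The plan is to squeeze two relations between $r$ and the $q_A$ out of the fact that $L$, too, satisfies normalization and independence, after rewriting via $l_A=m_Au$. First, since $l_A$ is a partial isometry, $l_A^\dag l_A=u^\dag q_A u$ is a projection, so expanding its idempotency and cancelling the isomorphisms $u,u^\dag$ gives $q_A r q_A=q_A$. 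Second, since $l_A^\dag l_A$ and $l_B^\dag l_B$ commute, the same cancellation gives $q_A r q_B=q_B r q_A$ for all $A,B\in\Omega$. The crucial observation is that these two relations already force $q_A r q_B=q_Aq_B$: multiply $q_A r q_B=q_B r q_A$ on the left by $q_B$ and use $q_B r q_B=q_B$ and $q_Aq_B=q_Bq_A$ to get $q_Aq_B=q_B r q_A$, and hence $q_A r q_B=q_B r q_A=q_Aq_B$.

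Finally I would push this back through the cone and cocone of $M$. For $A,B\in\Omega$, using $m_Bq_B=m_B$, $q_Am_A^\dag=m_A^\dag$ and the identity just obtained, $m_B r m_A^\dag=m_B q_B r q_A m_A^\dag=m_B q_Aq_B m_A^\dag=m_B m_A^\dag$, so $m_B(r-\id[M])m_A^\dag=0$. Since $\Omega$ is weakly initial, the legs $\{m_B\}_{B\in\Omega}$ are still jointly monic (each $m_C$ factors through some $m_B$ with $B\in\Omega$), so $r m_A^\dag=m_A^\dag$ for every $A\in\Omega$; dually, $M$ with cocone $\{m_A^\dag\}$ is a colimit of $\dag\circ D\colon\cat{J}\op\to\cat{C}$ and this cocone stays jointly epic when restricted to $\Omega$, so $r=\id[M]$. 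I expect the only non-obvious points to be the isolation of the relations $q_A r q_A=q_A$ and $q_A r q_B=q_B r q_A$ and the short cancellation turning them into $q_A r q_B=q_Aq_B$; the rest is bookkeeping with partial isometries together with the elementary but essential fact that restricting a jointly monic cone, or a jointly epic cocone, along a weakly initial class preserves joint monic-/epic-ness.
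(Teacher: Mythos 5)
Your proof is correct, and it takes a genuinely different route from the paper's. The paper deduces the theorem from Theorem~\ref{thm:equivalenttounitary} together with Lemma~\ref{lem:connectingmaps} (the statement that an adjointable natural transformation $\sigma\colon D\Rightarrow E$ induces commuting ``connecting squares'' between dagger limits), applied in the special case $\sigma=\id[D]$; the content is thus a fairly long diagram chase carried out at the level of the composites $l_Bl_A^\dag$ and $m_Bm_A^\dag$. You instead work directly with the comparison isomorphism $u$, set $r=uu^\dag$ and $q_A=m_A^\dag m_A$, and extract the purely algebraic relations $q_Arq_A=q_A$ and $q_Arq_B=q_Brq_A$ from normalization and independence of $L$, which you then combine with commutativity of the $q_A$ to get $q_Arq_B=q_Aq_B$ and hence $r=\id[M]$ by joint monicity/epicity of the cone/cocone restricted to $\Omega$ (and your justification that weak initiality preserves joint monicity and epicity under restriction is right). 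Your argument is shorter and more self-contained, but it leans essentially on the invertibility of $u$, which is exactly what is unavailable in the general setting of Lemma~\ref{lem:connectingmaps}; the paper's heavier route pays for itself because that lemma (for general adjointable $\sigma$, between limits of \emph{different} diagrams) is reused in Sections~\ref{sec:globaldaglims} and~\ref{sec:commutativity}, so your proof replaces the theorem but not the lemma. One cosmetic point: the expression $m_B(r-\id[M])m_A^\dag=0$ presumes additive structure the category need not have; since you only ever use it as shorthand for $m_Brm_A^\dag=m_Bm_A^\dag$, this is harmless, but it should be phrased that way.
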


\begin{example}\label{ex:biproducts}
  Let \cat{J} be a discrete category of arbitrary cardinality. As \cat{J} has only one weakly initial class (the one consisting of all objects), the dagger limit of any diagram $D\colon \cat{J}\to\cat{C}$ is unique up to unitary isomorphism. These are exactly the dagger products. However, note that Definition~\ref{def:daglim} does not require enrichment in commutative monoids nor the equation
  \begin{equation}\label{eq:biproduct}
    \id[A\oplus B]=l_A^\dag l_A+l_B^\dag l_B
  \end{equation} 
  (and thus works for infinite \cat{J} as well). Moreover, it doesn't require \cat{C} to have a zero object or zero morphisms in order to be defined up to unitary iso. On the other hand, if \cat{C} has zero morphisms, it is not hard to show directly that a dagger product in the sense of Definition~\ref{def:daglim} satisfies the traditional equations involving zero. Moreover, if \cat{C} is enriched in commutative monoids, equation~\eqref{eq:biproduct} also follows. These facts are proven for more general \cat{J} in Propositions~\ref{prop:limswithzeros} and~\ref{prop:limswithsums}.

  In fact, from Definition~\ref{def:daglim} one can glean a definition of (ordinary) biproduct of $A$ and $B$ that is unique up to isomorphism in an ordinary category \cat{C}, but doesn't require the existence of zero morphisms, and so generalizes the usual definition. For example, in \cat{Set} the biproduct $\emptyset\oplus\emptyset$ exists and is the empty set. Slightly more interestingly, if \cat{C} has all binary biproducts (in the traditional sense) and \cat{D} is any non-empty category, then $\cat{C}\sqcup \cat{D}$ has binary biproducts of pairs of objects from \cat{C} (in the generalized sense), but doesn't have zero morphisms. Of course, if a category has all binary biproducts in this generalized sense, one can show that it also has zero morphisms, so this definition is more general only in categories with some but not all biproducts. 
  For more details, see~\cite{karvonen:biproducts}.
\end{example}

\begin{example}
  Let \cat{J} be the indiscrete category on $n$ objects. When considering functors $\cat{J}\to\cat{C}$ that don't preserve the dagger on $\cat{J}$, the parameter $\Omega$ matters. 
  For example, take $n=2$ and consider the diagram $D \colon \cat{J} \to \cat{Hilb}$ defined by $D(1)=D(2)=\mathbb{C}$ where $D(1 \to 2)$ multiplies by $2$ but $D(2 \to 1)$ divides by $2$.
  Now $\Omega$ cannot be all of $\{1,2\}$, because no limiting cone can consist of partial isometries.
  If $\Omega = \{1\}$, there is a dagger limit $L=\mathbb{C}$ with $l_1=1$ and $l_2=2$.
  If $\Omega = \{2\}$, there is a dagger limit $L=\mathbb{C}$ with $l_1=\tfrac{1}{2}$ and $l_2=1$.
  These two dagger limits are clearly not unitarily isomorphic.

  The same can happen with \emph{chains}, where the preorder of integers is regarded as a category $\cat{J}$.
  Consider the diagram $D \colon \cat{J} \to \cat{Hilb}$ defined by $D(n) = \mathbb{C}$, and $D(n \to n+1)$ is multiplication by $-1^{n}$. 
  Now $\Omega$ can either consist of even numbers or the odd numbers. Hence $D$ has two dagger limits and they are not unitarily isomorphic.

  One might hope to get rid of this dependence on $\Omega$ by strengthening the definition to select exactly one of a diagram's several dagger limits. However, this is impossible in general. Write $\cat{J}(X)$ for the indiscrete category on a nonempty set $X\subset \mathbb{R}\setminus\{0\}$ of objects. Define $D(X)\colon\cat{J}(X)\to\cat{FHilb}$ by mapping the unique arrow $x\to y$ to the morphism $\mathbb{C}\to\mathbb{C}$ that multiplies by $\tfrac{x}{y}$. A choice of a dagger limit for each $D(X)$ amounts to a choice function on $\mathbb{R}\setminus\{0\}$. Thus there is no way to strengthen Definition~\ref{def:daglim} to make dagger limits unique in a way that doesn't depend on a choice of a weakly initial class.
\end{example}

\begin{example}
  In the domain theory part of Example~\ref{ex:cofree}, in fact $\Omega=\cat{J}$. Hence the bilimit is unique up to unique unitary isomorphism in \cat{DCPO_\leftrightarrows}. In  \cat{DCPO}, this means that any isomorphism of the limit half of such bilimits, is also an isomorphism of the colimit half.
\end{example}

\begin{example} 
  Consider dagger equalizers of $f,g\colon A\to B$ in the sense of Definition~\ref{def:daglim}. Any weakly initial class must contain $A$, and thus the dagger equalizer is unique up to unitary iso if it exists. Moreover, it is readily seen to coincide with the traditional definition. For if $e\colon E\to A$ is the dagger equalizer in the above sense, then $e$ is monic and a partial isometry, and thus dagger monic, so that $e$ is a dagger equalizer in the traditional sense.
\end{example}

\begin{theorem}\label{thm:equivalenttounitary} 
  Let $l_A \colon L \to D(A)$ and $m_A \colon M \to D(A)$ be limits of the same diagram $D \colon \cat{J} \to \cat{C}$ where \cat{C} is a dagger category, and let $f\colon L\to M$ be the unique isomorphism of limits. Then the following conditions are equivalent:
  \begin{enumerate}[(i)]
    \item $f$ is unitary;
    \item $f$ is also a morphism of colimits;
    \item the following diagram commutes for any $A$ and $B$ in $\cat{J}$:
    \[\begin{tikzpicture}
     \matrix (m) [matrix of math nodes,row sep=2em,column sep=4em,minimum width=2em]
     {
      D(A) & L \\
      M & D(B) \\};
     \path[->]
     (m-1-1) edge node [left] {$m_A^\dag$} (m-2-1)
            edge node [above] {$l_A^\dag$} (m-1-2)
     (m-1-2) edge node [right] {$l_B$} (m-2-2)
     (m-2-1) edge node [below] {$m_B$} (m-2-2);
    \end{tikzpicture}\]
  \end{enumerate}
\end{theorem}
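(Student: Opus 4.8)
\emph{Plan.} Recall that $f$ is the unique morphism satisfying $m_A \circ f = l_A$ for all $A \in \cat{J}$. Since the cone $\{l_A\}$ is limiting it is jointly monic, so $\{l_A^\dag\}$ is jointly epic; concretely, $l_A^\dag \colon D(A) \to L$ is the colimiting cocone of the diagram $\dag \circ D \colon \cat{J}\op \to \cat{C}$, and likewise for $\{m_A^\dag\}$. Hence condition~(ii) — that $f$ be a morphism of colimits — unpacks to the equations $f \circ l_A^\dag = m_A^\dag$ for all $A$. I would prove the theorem by establishing (i)$\Leftrightarrow$(ii) and then (i)$\Leftrightarrow$(iii), using only joint monicity of limiting cones, joint epicity of colimiting cocones, the defining equation $m_A \circ f = l_A$, and its dagger $f^\dag \circ m_A^\dag = l_A^\dag$.

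\emph{(i)$\Leftrightarrow$(ii).} If $f$ is unitary then $f^\dag = f^{-1}$, so dualizing $m_A f = l_A$ gives $m_A^\dag = f l_A^\dag$, which is~(ii). Conversely, assume $f l_A^\dag = m_A^\dag$ for all $A$. Then $f^\dag f l_A^\dag = f^\dag m_A^\dag = l_A^\dag$ for every $A$, and joint epicity of $\{l_A^\dag\}$ forces $f^\dag f = \id[L]$; since $f$ is already invertible, this makes $f^\dag = f^{-1}$, i.e.\ $f$ is unitary.

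\emph{(i)$\Leftrightarrow$(iii).} The diagram in~(iii) asserts $l_B \circ l_A^\dag = m_B \circ m_A^\dag$ for all $A,B$. If $f$ is unitary then $m_A^\dag = f l_A^\dag$ and $m_B = l_B f^\dag$ (the latter from $m_B f = l_B$), so $m_B m_A^\dag = l_B f^\dag f l_A^\dag = l_B l_A^\dag$, giving~(iii). Conversely, assuming~(iii), for each $A$ and every $B$ we have $m_B \circ (f l_A^\dag) = (m_B f) l_A^\dag = l_B l_A^\dag = m_B m_A^\dag$; joint monicity of the limiting cone $\{m_B\}$ then yields $f l_A^\dag = m_A^\dag$, i.e.~(ii), whence~(i). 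The final sentence of the statement — uniqueness up to unitary isomorphism — is then a special case, since any two dagger limits of $(D,\Omega)$ satisfy~(iii) by the independence axiom applied to both.

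\emph{Expected main obstacle.} There is no serious difficulty: the only points that need care are correctly interpreting~(ii) as the system $f l_A^\dag = m_A^\dag$, recording that $\{l_A^\dag\}$ is the (jointly epic) colimiting cocone of $\dag\circ D$, and keeping track at each step of which joint cancellation is legitimate — monicity for the limit cones $\{l_A\},\{m_A\}$, and epicity for their daggers. Everything else is a short diagram chase.
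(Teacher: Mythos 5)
Your proof of the three-way equivalence is correct and takes essentially the same route as the paper: unpack (ii) as $f\circ l_A^\dag = m_A^\dag$, and chase diagrams using the defining equation $m_A f = l_A$, its dagger, and joint monicity of limiting cones (equivalently joint epicity of their daggers); the paper proves the cycle (i)$\Rightarrow$(ii)$\Rightarrow$(iii)$\Rightarrow$(i) while you factor (iii)$\Rightarrow$(i) through (ii), but the computations are the same. One caveat: your closing remark that uniqueness of dagger limits up to unitary isomorphism follows ``by the independence axiom applied to both'' is not justified --- that condition (iii) holds between two dagger limits of $(D,\Omega)$ is exactly the content of the paper's Lemma~\ref{lem:connectingmaps} (applied with $\sigma=\id$), whose proof uses normalization and independence in an essential but nontrivial way; it is not an immediate consequence of the axioms, and in any case that uniqueness claim belongs to Theorem~\ref{thm:daglimsunique} rather than to the statement you were asked to prove.
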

\begin{proof}
    $(i)\Rightarrow (ii)$ By definition $f^{-1}$ is the unique map $M\to L$ that is compatible with the limit structure, whereas $f^{\dag}$ is the unique map $M\to L$ that is compatible with the \emph{colimit} structure. As $f$ is unitary, these coincide, whence $f^{-1}$ is simultaneously a map of limits and colimits. Therefore so too is $f$.

    $(ii)\Rightarrow (iii)$  By (ii), both of the triangles in the following diagram commute.
    \[\begin{tikzpicture}
         \matrix (m) [matrix of math nodes,row sep=2em,column sep=4em,minimum width=2em]
         {
          D(A) & L \\
          M & D(B) \\};
         \path[->]
         (m-1-1) edge node [left] {$m_A^\dag$} (m-2-1)
                edge node [above] {$l_A^\dag$} (m-1-2)
         (m-1-2) edge node [right] {$l_B$} (m-2-2)
                 edge node [above] {$f$} (m-2-1)
         (m-2-1) edge node [below] {$m_B$} (m-2-2);
    \end{tikzpicture}\]

    $(iii)\Rightarrow (i)$ To prove that $f\colon L\to M$ is unitary, it suffices to establish $f\circ f^\dag=\id[M]$. By the two universal properties of $M$, we may further reduce to pre- and postcomposing with structure maps to and from the diagram $D$. The following diagram commutes.
        \[
        \begin{tikzpicture}
         \matrix (m) [matrix of math nodes,row sep=2em,column sep=4em,minimum width=2em]
         {
          M & L & M \\
          D(A) & M & D(B) \\};
         \path[->]
         (m-1-1) edge node [above] {$f^\dag$} (m-1-2)
         (m-1-2) edge node [above] {$f$} (m-1-3)
                edge node [left=1mm] {$l_B$} (m-2-3)
         (m-1-3) edge node [right] {$m_B$} (m-2-3)
         (m-2-1) edge node [below] {$m^\dag_A$} (m-2-2)
                edge node [left] {$m^\dag_A$} (m-1-1)
                edge node [right=3mm] {$l^\dag_A$} (m-1-2)
         (m-2-2) edge node [below] {$m_B$} (m-2-3);
        \end{tikzpicture}
        \]
    Hence $f$ is unitary.
\end{proof}

The previous theorem highlights why one would want limits in dagger categories to be defined up to unitary isomorphism: if $(L,\{l_A \}_{A\in \cat{J}})$ is a limit of $D$, then $(L,\{l_A^\dag \}_{A\in \cat{J}})$ is a colimit of $\dag\circ D$, and being defined up to unitary iso ensures that the limit and the colimit structures are compatible with each other.

We now set out to prove Theorem~\ref{thm:daglimsunique}. The following lemma will be crucial.

\begin{lemma}\label{lem:connectingmaps}
  Let $l_A \colon L \to D(A)$ and $m_A \colon M \to E(A)$ be dagger limits of $(D,\Omega)$ and $(E,\Omega)$, respectively, where $D,E\colon \cat{J}\rightrightarrows \cat{C}$. Let $\sigma\colon D\to E$ be an adjointable natural transformation. The following diagram commutes for every $A$ and $B$ in $\cat{J}$.
  \begin{equation}\label{diag:connectingmaps}\begin{tikzpicture}
    \matrix (m) [matrix of math nodes,row sep=2em,column sep=4em,minimum width=2em]
    {
     D(A) & L & D(B) \\
     E(A) & M & E(B) \\};
    \path[->]
    (m-1-1) edge node [left] {$\sigma_A$} (m-2-1)
           edge node [above] {$l_A^\dag$} (m-1-2)
    (m-1-2) edge node [above] {$l_B$} (m-1-3)
    (m-1-3) edge node [right] {$\sigma_B$} (m-2-3)
    (m-2-1) edge node [below] {$m_A^\dag$} (m-2-2)
    (m-2-2) edge node [below] {$m_B$} (m-2-3);
  \end{tikzpicture}\end{equation}
\end{lemma}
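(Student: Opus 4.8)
The plan is to reduce the statement to a single identity between two canonically induced morphisms $L\to M$. Since $l$ is a limiting cone and $\sigma$ is natural, $(\sigma_A l_A)_{A\in\cat{J}}$ is a cone on $E$, so it factors uniquely as $m_A u=\sigma_A l_A$ through a morphism $u\colon L\to M$. Dually, taking daggers of the limiting cones exhibits $L$ as the colimit of the $\cat{J}\op$-diagram $A\mapsto D(A)$, $f\mapsto D(f)^\dag$ with colimiting cocone $(l_A^\dag)_{A\in\cat{J}}$, and likewise for $M$; since $\sigma$ is adjointable, $(m_A^\dag\sigma_A)_{A\in\cat{J}}$ is a cocone on the former diagram with apex $M$, hence factors uniquely as $u' l_A^\dag=m_A^\dag\sigma_A$ through a morphism $u'\colon L\to M$. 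Because $m_B u l_A^\dag=\sigma_B l_B l_A^\dag$ and $m_B u' l_A^\dag=m_B m_A^\dag\sigma_A$, and because the $m_B$ are jointly monic while the $l_A^\dag$ are jointly epic, diagram~\eqref{diag:connectingmaps} commutes for all $A$ and $B$ precisely when $u=u'$. (Taking $D=E$ and $\sigma=\id$, this is, via Theorem~\ref{thm:equivalenttounitary}, exactly the assertion that the canonical comparison $L\to M$ is unitary, which is how it will feed into Theorem~\ref{thm:daglimsunique}.)

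So it suffices to prove $u=u'$. First I would note that, as $\Omega$ is weakly initial, every $l_D^\dag$ with $D\notin\Omega$ factors as $l_C^\dag D(h)^\dag$ for some $h\colon C\to D$ with $C\in\Omega$, so the $l_A^\dag$ with $A\in\Omega$ remain jointly epic and it is enough to check $u l_A^\dag=m_A^\dag\sigma_A$ for each $A\in\Omega$. Composing $m_A u=\sigma_A l_A$ on the left with $m_A^\dag$ and on the right with $l_A^\dag$ gives $m_A^\dag m_A\,u l_A^\dag=m_A^\dag\sigma_A l_A l_A^\dag$; by normalization $l_A$ is a partial isometry, so $l_A^\dag l_A l_A^\dag=l_A^\dag$, and using the defining identity of $u'$ one gets $m_A^\dag\sigma_A l_A l_A^\dag=u' l_A^\dag l_A l_A^\dag=u' l_A^\dag=m_A^\dag\sigma_A$, hence $m_A^\dag m_A\,u l_A^\dag=m_A^\dag\sigma_A$. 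Since $m_A^\dag$ is a partial isometry with the same codomain $M$ as $u l_A^\dag$, Remark~\ref{rem:pisetc} then reduces the claim to showing that $u l_A^\dag$ factors through $m_A^\dag$, equivalently that the projection $m_A^\dag m_A$ on $M$ fixes $u l_A^\dag$.

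This last point is where the normalization and independence axioms genuinely enter: without them the limit and colimit structures on $L$ and $M$ are incompatible and $u\neq u'$ in general (already for a ``skew'' presentation of $\mathbb{C}\oplus\mathbb{C}$ as a limit of a two-object diagram in $\cat{Hilb}$, whose comparison to the standard dagger product is a non-unitary isomorphism). The tools I would use are that, by independence, the projections $m_B^\dag m_B$ on $M$ pairwise commute (as do the $l_B^\dag l_B$ on $L$), and that, by normalization and independence together, each $l_B l_A^\dag\colon D(A)\to D(B)$ and $m_B m_A^\dag\colon E(A)\to E(B)$ with $A,B\in\Omega$ is again a partial isometry (for instance $(l_Bl_A^\dag)(l_Bl_A^\dag)^\dag(l_Bl_A^\dag)=l_Bl_A^\dag l_Al_B^\dag l_Bl_A^\dag=l_B(l_A^\dag l_A)(l_B^\dag l_B)l_A^\dag=(l_Bl_B^\dag l_B)(l_A^\dag l_Al_A^\dag)=l_Bl_A^\dag$). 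Exploiting these commuting families to control $u l_A^\dag$ one object at a time—showing it is compatibly supported on each $\im(m_B)$ and that the commuting family forces it onto $\im(m_A)$—should yield $m_A^\dag m_A\,u l_A^\dag=u l_A^\dag$ and finish the proof. I expect this projection bookkeeping to be the main obstacle: all the naive manipulations of $\sigma_B l_B l_A^\dag$ and $m_B m_A^\dag\sigma_A$ circle straight back to the identity being proved, so the argument has to use commutativity of the induced projections in an essential way, not merely their idempotence.
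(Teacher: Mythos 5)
Your setup is sound and in fact packages the lemma together with Corollary~\ref{cor:mapoflimsismapofcolims}: defining $u$ as the induced map of limits and $u'$ as the induced map of colimits (your verification that $(m_A^\dag\sigma_A)_A$ is a cocone via adjointability is correct), the lemma is indeed equivalent to $u=u'$, and your reductions --- to $A\in\Omega$ by weak initiality, and then to the single identity $m_A^\dag m_A\,u\,l_A^\dag=u\,l_A^\dag$ via $m_A^\dag m_A\,u\,l_A^\dag=m_A^\dag\sigma_A$ and Remark~\ref{rem:pisetc} --- are all valid. The observation that $l_Bl_A^\dag$ and $m_Bm_A^\dag$ are partial isometries for $A,B\in\Omega$ is also correct and is used in the paper.

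However, the proof stops exactly where the real work begins. The identity $m_A^\dag m_A\,u\,l_A^\dag=u\,l_A^\dag$ is, as you yourself note, equivalent to the statement being proved (postcomposing with $m_B$ recovers the full diagram for every $B$), and you offer only the expectation that ``projection bookkeeping'' will close the loop. That is a genuine gap, not a routine verification: the paper needs a specific multi-step argument here. Concretely, it first proves the \emph{diagonal} identity $m_Am_A^\dag\sigma_A=\sigma_A l_Al_A^\dag$ by establishing the one-sided absorption $\sigma_Al_Al_A^\dag=m_Am_A^\dag\sigma_Al_Al_A^\dag$ (from $m_A$ being a partial isometry and the definition of $u$), running the same argument for $\sigma^\dag$ with $L$ and $M$ exchanged, and combining the two via the dagger --- this is where adjointability is used essentially, and it is invisible in your sketch. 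It then derives three interlocking off-diagonal equations, $m_{A,B}\sigma_A=m_{A,B}m_{B,A}\sigma_Bl_{A,B}$ and its two variants obtained by the same $\sigma\leftrightarrow\sigma^\dag$, $A\leftrightarrow B$ symmetries, and chains them together using that $l_{A,B}$ is a partial isometry. Without some version of this diagonal-then-off-diagonal bootstrap (or another concrete mechanism), your final step remains an unproven restatement of the goal.
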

\begin{proof}
  First we show that it is enough to check that~\eqref{diag:connectingmaps} commutes whenever $A,B\in\Omega$. So assume that it does and let $X,Y\in\cat{J}$ be arbitrary. By weak initiality of $\Omega$ we can find maps $f\colon A\to X$ and $g\colon B\to Y$ for some $A,B\in\Omega$. Now we prove the claim for $X$ and $Y$ assuming the claim for $A$ and $B$ by showing that the diagram 
    \[\begin{tikzpicture}
    \matrix (m) [matrix of math nodes,row sep=2em,column sep=4em,minimum width=2em]
    {
     D(X) & E(X) &  \\
    &  D(A) & E(A) & M \\
    & D(B) & E(B)\\
    L & &D(Y) & E(Y)\\};
    \path[->]
    (m-1-1) edge node [below] {$Df^\dag\ \ $} (m-2-2)
           edge node [above] {$\sigma_X$} (m-1-2)
           edge node [left] {$l_X^\dag$} (m-4-1)
    (m-1-2) edge node [above] {\quad$Ef^\dag$} (m-2-3)
             edge[out=0,in=135] node [above] {$m_X^\dag$} (m-2-4)
    (m-2-2) edge node [below] {$\sigma_A$} (m-2-3)
            edge node [above] {$l_A^\dag\ \ $} (m-4-1)
    (m-2-3) edge node [above] {$m_A^\dag$} (m-2-4)
    (m-2-4) edge node [right] {$m_Y$} (m-4-4)
            edge node [below] {$\ \ m_B$} (m-3-3)
    (m-3-2) edge node [above] {$\sigma_B$} (m-3-3)
            edge node [below] {$Dg$} (m-4-3) 
    (m-3-3) edge node [above] {$Eg$} (m-4-4)
    (m-4-1) edge node [below] {$l_B$} (m-3-2)
            edge node [below] {$l_Y$} (m-4-3)
    (m-4-3) edge node [below] {$\sigma_Y$} (m-4-4);
  \end{tikzpicture}\]
  commutes. The parallelogram on the bottom (top) commutes because $\sigma$ is natural (and adjointable). The triangles on the left of these parallelograms commute because $L$ is a cone and the triangles on the right commute because $M$ is. Finally, the remaining shape is just diagram~\eqref{diag:connectingmaps} with $A,B\in\Omega$.

  For the rest of the proof, write $l_{A,B}:=l_Bl_A^\dag\colon D(A)\to L\to D(B)$ and similarly $m_{A,B}:= m_B m_A^\dag$. Note that $l_{B,A}^\dag=l_{A,B}$ and $m_{B,A}^\dag=m_{A,B}$. Moreover, $l_{A,B}$ and $m_{A,B}$ are partial isometries whenever $A,B\in\Omega$ by Remark~\ref{rem:pisetc}. Our goal is to show that
  \begin{equation}\label{eq:connectingmaps}m_{A,B}\sigma_A=\sigma_B l_{A,B}\end{equation}
  for all $A$ and $B$ in $\cat{J}$. 

  Let $f\colon L\to M$ be the unique map making this square commute for all $A$ in $\cat{J}$:
  \[\begin{tikzpicture}
         \matrix (m) [matrix of math nodes,row sep=2em,column sep=4em,minimum width=2em]
         {
          L & D(A) \\
          M & E(A) \\};
         \path[->]
         (m-1-1) edge[dashed] node [left] {$f$} (m-2-1)
                edge node [above] {$l_A$} (m-1-2)
         (m-1-2) edge node [right] {$\sigma_A$} (m-2-2)
         (m-2-1) edge node [below] {$m_A$} (m-2-2);
  \end{tikzpicture}\]
  If $A \in \Omega$, then $m_A$ is a partial isometry, so the following diagram commutes:
  \[\begin{tikzpicture}[yscale=2.5,xscale=3]
    \node (a1) at (0,1) {$D(A)$};
    \node (a2) at (1,1) {$L$};
    \node (a3) at (2,1) {$D(A)$};
    \node (a4) at (3,1) {$E(A)$};
    \node (b1) at (1,0) {$D(A)$};
    \node (b2) at (2,.5) {$M$};
    \node (b3) at (2,0) {$E(A)$};
    \node (b4) at (3,0) {$M$};
    \draw[dagger,->] (a1) to node[above]{$l_A^\dag$} (a2);
    \draw[dagger,->] (a2) to node[left]{$l_A$} (b1);
    \draw[->] (a2) to node[below]{$f$} (b2);
    \draw[dagger,->] (a2) to node[above]{$l_A$} (a3);
    \draw[->] (a3) to node[above]{$\sigma_A$} (a4);
    \draw[dagger,->] (b2) to node[below=1mm]{$m_A$} (a4);
    \draw[dagger,->] (b2) to node[right]{$m_A$} (b3);
    \draw[dagger,->] (b3) to node[below]{$m_A^\dag$} (b4);
    \draw[dagger,->] (b4) to node[right]{$m_A$} (a4);
    \draw[->] (b1) to node[below]{$\sigma_A$} (b3);
  \end{tikzpicture}\]
  This shows that 
  $
    \sigma_Al_{A,A}=m_{A,A}\sigma_Al_{A,A}\text.
  $
  Repeating the argument with $\sigma$ replaced by $\sigma^\dag$ gives 
  $
    \sigma_A^\dag m_{A,A}=l_{A,A}\sigma_A^\dag m_{A,A}\text.
  $
  Combining the first equation with the dagger of the second shows that for every $A \in \Omega$:
  \begin{equation}\label{eq:proj lemma} 
    m_{A,A}\sigma_A=\sigma_A l_{A,A}\text.
  \end{equation}  

  Next, we check that the following diagram commutes:
  \[\begin{tikzpicture}[yscale=1.75,xscale=3]
    \node (a1) at (0,4) {$D(A)$};
    \node (a2) at (1,4) {$E(A)$};
    \node (a3) at (2,4) {$M$};
    \node (a5) at (4,4) {$E(B)$};
    \node (b2) at (1.5,3.5) {$M$};
    \node (b3) at (2,3) {$E(A)$};
    \node (b4) at (3,3.5) {$E(B)$};
    \node (b5) at (4,3) {$M$};
    \node (c1) at (0,2) {$L$};
    \node (c2) at (1,2.5) {$D(A)$};
    \node (c3) at (2,2) {$M$};
    \node (c5) at (4,2) {$E(A)$};
    \node (d1) at (0,1) {$D(B)$};
    \node (d3) at (2,1) {$E(B)$};
    \node (d5) at (4,1) {$M$};
    \draw[->] (a1) to node[above] {$\sigma_A$} (a2);
    \draw[dagger,->] (a2) to node[above] {$m_A^\dag$} (a3);
    \draw[dagger,->] (a3) to node[above] {$m_B$} (a5);
    \draw[dagger,->] (a1) to node[left] {$l_A^\dag$} (c1);
    \draw[dagger,->] (a2) to node[below left=-1mm] {$m_A^\dag$} (b2);
    \draw[dagger,->] (b2) to node[below left=-1mm] {$m_A$} (b3);
    \draw[dagger,->] (b3) to node[right] {$m_A^\dag$} (a3);
    \draw[dagger,->] (a3) to node[below] {$m_B$} (b4);
    \draw[dagger,->] (b4) to node[below]{$m_B^\dag$} (b5);
    \draw[dagger,->] (b5) to node[right]{$m_B$} (a5);
    \draw[dagger,->] (c1) to node[above]{$l_A$} (c2);
    \draw[->] (c2) to node[below]{$\sigma_A$} (b3);
    \draw[dagger,->] (c3) to node[right]{$m_A$} (b3);
    \draw[dagger,->] (c5) to node[right]{$m_A^\dag$} (b5);
    \draw[dagger,->] (c1) to node[left]{$l_B$} (d1);
    \draw[->] (c1) to node[below]{$f$} (c3);
    \draw[dagger,->] (c3) to node[right]{$m_B$} (d3);
    \draw[->] (d1) to node[below]{$\sigma_B$} (d3);
    \draw[dagger,->] (d3) to node[below]{$m_B^\dag$} (d5);
    \draw[dagger,->] (d5) to node[right]{$m_A$} (c5);
    \draw[gray,draw=none] (a1) to node{(iii)} (c2);
    \draw[gray,draw=none] (b2) to node{(i)} (a3);
    \draw[gray,draw=none] (b4) to node{(ii)} (a5);
    \draw[gray,draw=none] (d3) to node{(vi)} (b5);
    \draw[gray,draw=none] (c2) to node{(iv)} (c3);
    \draw[gray,draw=none] (c1) to node{(v)} (d3);
  \end{tikzpicture}\]
  Region (i) commutes since $m_A$ is a partial isometry and region (ii) since $m_B$ is. Region (iii) is equation~\eqref{eq:proj lemma}. Regions (iv) and (v) commute by definition of  $f$.  Finally, the commutativity of projections $m_A^\dag m_A$ and $m_B^\dag m_B$ shows that region (vi) commutes. 

  Commutativity of the outermost rectangle says that
  \begin{equation}\label{eq:lAB1}
        m_{A,B}\sigma_A=m_{A,B}m_{B,A}\sigma_B l_{A,B}
  \end{equation}
  for every $A,B\in\Omega$.   

  Exchanging $M$ and $L$ and replacing $\sigma$ with $\sigma^\dag$ in the above diagram now gives
  $
        l_{A,B}\sigma_A^\dag=l_{A,B}l_{B,A}\sigma_B^\dag m_{A,B}
  $, and applying the dagger on both sides shows
  \begin{equation}\label{eq:lAB2}
        \sigma_A l_{B,A}=m_{B,A}\sigma_B l_{A,B}l_{B,A} 
  \end{equation}
  for every $A,B\in\Omega$. Exchanging $A$ and $B$ gives 
  \begin{equation}\label{eq:lAB3}
        \sigma_B l_{A,B}=m_{A,B}\sigma_A l_{B,A}l_{A,B} 
  \end{equation}
  for every $A,B\in\Omega$. Finally, combine these equations:
    \begin{align*}
    m_{A,B}\sigma_A&=m_{A,B}m_{B,A}\sigma_B l_{A,B}&&\text{ by }\eqref{eq:lAB1} \\ 
    &=m_{A,B}m_{B,A}\sigma_B l_{A,B}l_{B,A} l_{A,B} &&\text{ since }l_{A,B}\text{ is a partial isometry}\\ 
    &=m_{A,B}\sigma_A l_{B,A}l_{A,B}&&\text{ by }\eqref{eq:lAB2} \\ 
    &=\sigma_B l_{A,B} &&\text{ by }\eqref{eq:lAB3}
    \end{align*}
  This completes the proof.
\end{proof}

\begin{corollary}\label{cor:mapoflimsismapofcolims} 
  In the situation of Lemma~\ref{lem:connectingmaps}, the unique morphism $f \colon L\to M$ satisfying
  \[\begin{tikzpicture}
         \matrix (m) [matrix of math nodes,row sep=2em,column sep=4em,minimum width=2em]
         {
         L & M \\
          D(A) & E(A) \\};
         \path[->]
         (m-1-1) edge node [left] {$l_A$} (m-2-1)
                edge[dashed] node [above] {$f$} (m-1-2)
         (m-1-2) edge node [right] {$m_A$} (m-2-2)
         (m-2-1) edge node [below] {$\sigma_A$} (m-2-2);       
  \end{tikzpicture}\]
  also makes the following diagram commute.
  \begin{equation}\label{diag:colims}
  \begin{aligned}\begin{tikzpicture}
         \matrix (m) [matrix of math nodes,row sep=2em,column sep=4em,minimum width=2em]
         {
         L& M \\
          D(A) & E(A) \\};
         \path[->]
         (m-1-1) edge node [above] {$f$} (m-1-2)
         (m-2-2) edge node [right] {$m_A^\dag $} (m-1-2)
         (m-2-1) edge node [below] {$\sigma_A$} (m-2-2)
                 edge node [left] {$l_A^\dag$} (m-1-1);  
  \end{tikzpicture}\end{aligned}\end{equation}
  In other words, the map of limits $L \to M$ induced by $\sigma$ coincides with the map of colimits $L \to M$ induced by $\sigma\colon \dag\circ D\to\dag \circ E$.
\end{corollary}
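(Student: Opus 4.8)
The plan is to verify directly that the mediating map $f\colon L\to M$ of limits, characterised by $m_A f = \sigma_A l_A$ for all $A$, also satisfies $f\circ l_A^\dag = m_A^\dag\circ\sigma_A$ for every $A\in\cat{J}$; this is precisely the commutativity of diagram~\eqref{diag:colims}. The concluding rephrasing is then immediate: since $\sigma$ is adjointable it defines a natural transformation $\dag\circ D\Rightarrow\dag\circ E$, so the map of colimits it induces is by definition the unique cocone morphism $L\to M$ making diagram~\eqref{diag:colims} commute, and we will have exhibited $f$ as such a morphism.

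To prove the displayed identity, I would fix $A$ and compare the two morphisms $f\circ l_A^\dag$ and $m_A^\dag\circ\sigma_A$, both of type $D(A)\to M$. Since $M$ is a limit of $E$ with legs $m_B\colon M\to E(B)$, two maps into $M$ agree as soon as their composites with every $m_B$ agree, so it suffices to check $m_B\circ f\circ l_A^\dag = m_B\circ m_A^\dag\circ\sigma_A$ for all $B$. For the left-hand side, the defining property of $f$ gives $m_B f = \sigma_B l_B$, hence $m_B\circ f\circ l_A^\dag = \sigma_B\circ l_B\circ l_A^\dag$. Writing $l_{A,B}=l_B l_A^\dag$ and $m_{A,B}=m_B m_A^\dag$ as in Lemma~\ref{lem:connectingmaps}, the desired equality becomes $m_{A,B}\sigma_A = \sigma_B l_{A,B}$, which is exactly the commutativity of diagram~\eqref{diag:connectingmaps}. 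Since this holds for every $B$, the universal property of the limit $M$ yields $f\circ l_A^\dag = m_A^\dag\circ\sigma_A$, as required.

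I do not expect a genuine obstacle here: the substantive work has been front-loaded into Lemma~\ref{lem:connectingmaps}, and the corollary is a clean unpacking of diagram~\eqref{diag:connectingmaps} combined with two uses of universal properties — the existence and uniqueness of $f$ as a morphism of limits, and the uniqueness of morphisms into $M$. The only point requiring a little care is the bookkeeping identifying $m_A^\dag\circ\sigma_A$ with the legs of the induced cocone on $\dag\circ D$, i.e.\ confirming that adjointability of $\sigma$ is what makes the phrase ``the map of colimits induced by $\sigma$'' meaningful and characterised by diagram~\eqref{diag:colims}.
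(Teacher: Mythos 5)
Your proposal is correct and follows the paper's own argument exactly: postcompose both $f\circ l_A^\dag$ and $m_A^\dag\circ\sigma_A$ with an arbitrary leg $m_B$ (the legs of the limit $M$ being jointly monic), reduce one side via the defining property of $f$ and the other via diagram~\eqref{diag:connectingmaps} from Lemma~\ref{lem:connectingmaps}. Nothing to add.
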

\begin{proof} 
  To show that $f$ makes diagram~\eqref{diag:colims} commute, it suffices to postcompose with an arbitrary $m_B$ and show that the following diagram commutes.
        \[
        \begin{tikzpicture}
         \matrix (m) [matrix of math nodes,row sep=2em,column sep=4em,minimum width=2em]
         {
          D(A) & E(A) & M \\
          L & D(B) & E(B )\\ 
           & M \\};
         \path[->]
         (m-1-1) edge node [above] {$\sigma_A$} (m-1-2)
                 edge node [left] {$l_A^\dag$} (m-2-1)
         (m-1-2) edge node [above] {$m_A^\dag$} (m-1-3)
         (m-3-2) edge node [below] {$m_B$} (m-2-3)
         (m-1-3) edge node [right] {$m_B$} (m-2-3)
         (m-2-1) edge node [above] {$l_B$} (m-2-2)
                edge node [below] {$f$} (m-3-2)
          (m-2-2) edge node [above] {$\sigma_B$} (m-2-3);
        \end{tikzpicture}
        \]
  The bottom part commutes by definition of $f$, and the top rectangle by Lemma~\ref{lem:connectingmaps}.
\end{proof}

We can now prove Theorem~\ref{thm:daglimsunique}.

\begin{proof}[of Theorem~\ref{thm:daglimsunique}] If the map of cones $L\to M$ is unitary, it is straightforward to verify that $M$ is a dagger limit of $(D,\Omega)$. For the converse, by Theorem~\ref{thm:equivalenttounitary} it suffices to prove that the diagram
    \[\begin{tikzpicture}
     \matrix (m) [matrix of math nodes,row sep=2em,column sep=4em,minimum width=2em]
     {
      D(A) & L \\
      M & D(B) \\};
     \path[->]
     (m-1-1) edge node [left] {$m_A^\dag$} (m-2-1)
            edge node [above] {$l_A^\dag$} (m-1-2)
     (m-1-2) edge node [right] {$l_B$} (m-2-2)
     (m-2-1) edge node [below] {$m_B$} (m-2-2);
    \end{tikzpicture}\]
commutes for any $A$ and $B$ in $\cat{J}$. This follows from Lemma~\ref{lem:connectingmaps} by choosing $\sigma=\id[D]$, which is clearly adjointable.
\end{proof}

Note that the preceding proof only used Lemma~\ref{lem:connectingmaps} in the special case of $\sigma=\id$. This special case is not sufficient for the sequel and indeed the general versions of Lemma~\ref{lem:connectingmaps} and of Corollary~\ref{cor:mapoflimsismapofcolims} are used later on, \eg in Sections~\ref{sec:globaldaglims} and~\ref{sec:commutativity}.

\section{Completeness}\label{sec:completeness}

Perhaps the most obvious definitions of dagger completeness would be ``every small diagram has a dagger limit for some/all possible weakly initial $\Omega$''. However, here dagger category theory deviates from ordinary category theory. Such a definition would be too strong to allow interesting models, as the following theorems show.

\begin{theorem}\label{thm:finitelydagcompleteimpliesindiscrete} 
  If a dagger category has dagger equalizers, dagger pullbacks and finite dagger products, then it must be indiscrete.
\end{theorem}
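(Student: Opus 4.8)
The plan is to bootstrap: each hypothesis is mild on its own, but together they force ever stronger structure until the category trivialises.

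\emph{Step 1: a zero object, and partial isometries everywhere.} Finite dagger products include the empty one, so $\cat{C}$ has a terminal object, which in a dagger category is automatically a zero object; hence $\cat{C}$ has zero morphisms. Next I would show that \emph{every} morphism $f\colon A\to B$ is a partial isometry. The key trick: the ordinary pullback of the cospan $A\xrightarrow{f}B\xleftarrow{\id[B]}B$ is $(A,\id[A],f)$, so any dagger pullback of it has apex related to $A$ by an isomorphism $\phi$ with legs $(\phi,f\phi)$. Normalization forces $\phi$ and $f\phi$ to be partial isometries; an isomorphism that is a partial isometry is unitary, so $f=(f\phi)\phi^{\dag}$ is a partial isometry too, as pre-composing with a unitary preserves this property.

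\emph{Step 2: dagger biproducts and a commutative-monoid enrichment.} Since a dagger category has $\cat{J}$-shaped limits iff it has $\cat{J}\op$-shaped colimits, $\cat{C}$ also has finite dagger coproducts. Write $A\oplus B$ for the dagger product (with $p_A,p_B$ and $\iota_A=p_A^\dag,\iota_B=p_B^\dag$, satisfying $p_Xp_Y^\dag=\delta_{XY}\id[X]$, using the zero morphisms from Step 1) and $A\boxplus B$ for the dagger coproduct (with coprojections $j_A,j_B$ satisfying $j_X^\dag j_Y=\delta_{XY}\id[X]$). The canonical comparison $\kappa\colon A\boxplus B\to A\oplus B$ with $\kappa j_X=\iota_X$ then satisfies $p_X\kappa=j_X^\dag$, hence $\kappa^\dag\kappa\, j_X=j_X$ and $p_X\kappa\kappa^\dag=p_X$; as $j_A,j_B$ are jointly epic and $p_A,p_B$ jointly monic, $\kappa$ is unitary. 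So dagger products are dagger biproducts, $\cat{C}$ is canonically enriched in commutative monoids with bilinear composition, $\dag$ is additive, and the biproduct identity $\iota_Ap_A+\iota_Bp_B=\id[A\oplus B]$ holds.

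\emph{Step 3: the collapse.} For any $f\colon A\to B$ the map $\langle\id[A],f\rangle\colon A\to A\oplus B$ is split monic (retracted by $p_A$) and a partial isometry by Step 1, hence dagger monic, so expanding $\langle\id[A],f\rangle^\dag\langle\id[A],f\rangle=\id[A]$ via the biproduct gives $\id[A]+f^\dag f=\id[A]$. With $f=\id[A]$ this yields $\id[A]+\id[A]=\id[A]$ on every object; with $f$ any projection $q$ it gives $\id{}+q=\id{}$. Apply both facts to $q:=\Delta_A\Delta_A^\dag$ on $A\oplus A$, where $\Delta_A=\langle\id[A],\id[A]\rangle=\iota_1+\iota_2$ is a partial isometry and so $q$ is a projection: on one hand $\id[A\oplus A]+q=\id[A\oplus A]$; on the other, $q=(\iota_1+\iota_2)(p_1+p_2)=\id[A\oplus A]+\iota_1p_2+\iota_2p_1$, so $\id[A\oplus A]+q=q$. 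Hence $q=\id[A\oplus A]$, i.e.\ $\Delta_A$ is unitary. Then $p_1=p_1\Delta_A\Delta_A^\dag=\Delta_A^\dag=p_2$, so $\id[A]=p_1p_1^\dag=p_2p_1^\dag=0$. Thus every object is a zero object, and therefore every hom-set is a singleton: $\cat{C}$ is indiscrete.

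The main obstacle is Step 2: one must recognise that the three hypotheses, through self-duality, also hand over finite dagger coproducts, and that ``every morphism is a partial isometry'' is exactly what makes the product-to-coproduct comparison unitary. Once biproducts are in hand, the commutative-monoid arithmetic of Step 3 (in particular the inequality $f^\dag f\le\id$) finishes things quickly. (Dagger equalizers are not strictly needed for this line of argument, though they fit naturally alongside dagger pullbacks and reappear when defining dagger completeness.)
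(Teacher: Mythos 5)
Your proof is correct, and while your Steps 1 and 2 coincide in substance with the paper's Lemmas~\ref{lem:dagpbs->everythingispi} and~\ref{lem:matrixcalculus} (dagger pullbacks along identities force every morphism to be a partial isometry; dagger products are biproducts and yield a commutative-monoid enrichment with an additive dagger), your Step 3 takes a genuinely different route to the collapse. The paper extracts from the partial isometry $\langle\id,f\rangle$ the equations $\id+f^\dag f=\id$ and $f=f+f$ and then invokes cancellativity of addition, which is exactly where dagger equalizers enter (via \cite[2.6]{vicary2011completeness}). You instead stay inside the resulting idempotent commutative monoids: from $\id+\id=\id$ and $\id+q=\id$ for every projection $q$ you squeeze $\Delta_A\Delta_A^\dag=\id+\iota_1p_2+\iota_2p_1$ between $\id$ from both sides, forcing $\Delta_A$ to be unitary, hence $p_1=p_2$ and $\id[A]=p_1p_1^\dag=p_2p_1^\dag=0$. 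I checked the computation and it is sound; note that $\id+\Delta_A\Delta_A^\dag=\id$ is just the instance $f=\Delta_A^\dag$ of your basic identity, so you do not even strictly need the observation that $\Delta_A\Delta_A^\dag$ is a projection. The payoff is that your closing parenthetical is right: dagger equalizers are never used, so you have in fact proved the stronger statement that dagger pullbacks together with finite dagger products already force indiscreteness. What the paper's route buys instead is uniformity: the same cancellativity lemma drives both this theorem and Theorem~\ref{thm:dagcompleteimpliesindiscrete}, where the Eilenberg swindle replaces your diagonal trick but the pullback hypothesis --- and hence your idempotency $\id+\id=\id$ --- is unavailable.
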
 

\begin{theorem}\label{thm:dagcompleteimpliesindiscrete} 
  If a dagger category has dagger equalizers and infinite dagger products, then it must be indiscrete.
\end{theorem}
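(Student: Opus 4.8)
The strategy is to show that every object of $\cat C$ is a zero object; this suffices, since then each object is both initial and terminal, so there is a unique morphism between any two objects, \ie $\cat C$ is indiscrete. First note that $\cat C$ has a zero object — the empty dagger product is a terminal object, and a terminal object in a dagger category is automatically a zero object (see Example~\ref{ex:concretedaggerlimits}) — so $\cat C$ has zero morphisms, and together with finite dagger products this makes $\cat C$ enriched in commutative monoids, so that sums of parallel morphisms are available and dagger products are dagger biproducts.

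Fix an object $A$ and form the countable dagger power $P=\prod_{n\in\mathbb{N}}A$, whose projections $\pi_n$ are coisometries with $\pi_m\pi_n^\dag=0$ for $m\neq n$; write $\iota_n=\pi_n^\dag$. Using only the product universal property (and its dagger) one produces the \emph{infinite diagonal} $\delta\colon A\to P$, determined by $\pi_n\delta=\id[A]$ for all $n$, and the \emph{backward shift} $\beta\colon P\to P$, determined by $\pi_n\beta=\pi_{n+1}$; one checks that $\beta$ is a coisometry satisfying $\id[P]=(\beta^{\,j})^\dag\beta^{\,j}+\sum_{i<j}\iota_i\pi_i$ for every $j\in\mathbb{N}$, and that $\beta\delta=\delta$. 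Now take the dagger equalizer $e\colon E\to P$ of $\id[P]$ and $\beta$: the equation $\beta e=e$ forces $\pi_n e$ to be independent of $n$, say $\pi_n e=k\colon E\to A$; the map $e$ is dagger monic; and, since $\beta\delta=\delta$, the cone $\delta$ factors as $\delta=e\delta'$ with $k\delta'=\pi_0\delta=\id[A]$, so $\delta'$ is split monic.

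Substituting $\id[P]=(\beta^{\,j})^\dag\beta^{\,j}+\sum_{i<j}\iota_i\pi_i$ into $e^\dag e$ and into $\delta^\dag\delta$, and using $\beta^{\,j}e=e$, $\beta^{\,j}\delta=\delta$, $\pi_i e=k$ and $\pi_i\delta=\id[A]$, produces the \emph{absorption identities} $\id[E]=e^\dag e=\id[E]+j\,k^\dag k$ and $\delta^\dag\delta=\delta^\dag\delta+j\,\id[A]$ for every $j\in\mathbb{N}$; combining the first (at $j=1$) with $k\delta'=\id[A]$ gives $\id[A]=\id[A]+kk^\dag$. The hard part will be to promote such an identity $x=x+y$ with $y$ positive to the collapse $\id[A]=0$, which says that $A$ is a zero object: this fails for commutative monoids in general — indeed $\cat{Rel}$ has all dagger products yet is not indiscrete, so it must lack some dagger equalizer — so the argument must invoke the hypotheses once more, via a further dagger equalizer (or, dually by self-duality, a dagger coequalizer) whose universal property forces the absorbing endomorphism $\delta^\dag\delta$ of $A$ to be ``truncated'', after which $\delta^\dag\delta=\delta^\dag\delta+\id[A]$ collapses to $\id[A]=0$. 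Identifying and exploiting that last (co)equalizer is the crux; everything before it is routine bookkeeping with coisometries, shifts and the biproduct structure. One might instead hope to deduce dagger pullbacks and finite dagger products from the hypotheses and then invoke Theorem~\ref{thm:finitelydagcompleteimpliesindiscrete}, but building dagger pullbacks from dagger equalizers and products is itself delicate — the naive equalizer of $A\times B\rightrightarrows C$ yields a dagger-monic, not partial-isometric, cone — so I would pursue the direct route above.
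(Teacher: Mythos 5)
There is a genuine gap, and you have named it yourself: you stop at the absorption identity $x = x + y$ (with $y$ positive) and say that ``identifying and exploiting that last (co)equalizer is the crux'' without actually doing so. The missing ingredient is not a further cleverly chosen equalizer of specific maps, but a general consequence of having dagger equalizers at all: in a category with finite dagger products (hence canonically enriched in commutative monoids), dagger equalizers force the addition to be \emph{cancellative}. This is the second claim of Lemma~\ref{lem:matrixcalculus} in the paper, which delegates to \cite[2.6]{vicary2011completeness}. Once you have cancellativity, your identity $\delta'^\dag\delta' + 0 = \delta'^\dag\delta' + \id[A]$ (or $\id[A]=\id[A]+kk^\dag$) collapses immediately to $\id[A]=0$, and you are done. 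Without cancellativity the argument cannot close, exactly as your $\cat{Rel}$ sanity check warns: $\cat{Rel}$ has all dagger products and satisfies plenty of absorption identities, and it is precisely cancellativity that fails there (and hence some dagger equalizer is missing).

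For comparison, the paper's proof is a two-line Eilenberg swindle: infinite dagger products make the category enriched in $\Sigma$-monoids, so $0+(f+f+\cdots)=f+f+\cdots=f+(f+f+\cdots)$, and binary cancellativity then gives $f=0$ for every morphism $f$. Your backward-shift operator $\beta$ on $\prod_{n}A$ together with the dagger equalizer of $\id$ and $\beta$ is essentially a hands-on implementation of the same re-indexing trick, and your bookkeeping with the coisometry identities looks sound; but all of that machinery only reproduces the absorption identity that the swindle gives directly, and neither version terminates without the cancellativity lemma. So the fix is to prove (or cite) that dagger equalizers imply cancellative addition, insert it at your marked crux, and then most of your shift construction becomes unnecessary.
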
 

In proving these degeneration theorems, it is useful to isolate some lemmas that derive key properties implied by having dagger equalizers, dagger pullbacks and finite dagger products.

\begin{lemma}\label{lem:dagpbs->everythingispi} 
  The dagger pullback of a morphism $f\colon A\to B$ along $\id[B]$ exists if and only if $f$ is a partial isometry. In particular, if a dagger category has all dagger pullbacks, then every morphism is a partial isometry. 
\end{lemma}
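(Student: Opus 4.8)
The plan is to apply Definition~\ref{def:daglim} directly to the cospan shape. I would take $\cat{J}$ to be the category $1\to 2\leftarrow 3$ and $D\colon\cat{J}\to\cat{C}$ the diagram sending $1\to 2$ to $f\colon A\to B$ and $3\to 2$ to $\id[B]$, so that a dagger pullback of $f$ along $\id[B]$ is a dagger limit of $(D,\Omega)$ with $\Omega=\{1,3\}$. (Any weakly initial class must contain both $1$ and $3$, which receive no non-identity arrows; and for this particular $D$ it is immaterial whether $2\in\Omega$, since the cone condition forces the leg over $2$ to coincide with the leg over $3$.) First I would identify the ordinary pullback: a cone over the cospan with apex $X$ amounts to a pair $(a\colon X\to A,\ b\colon X\to B)$ with $b=fa$, hence is determined by $a$ alone, so $A$ itself is the pullback, with legs $\id[A]\colon A\to A$ over object $1$ and $f\colon A\to B$ over objects $2$ and $3$.

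For the ``if'' direction, assuming $f$ is a partial isometry, I would simply check that the pullback $(A,\id[A],f)$ meets the two conditions of Definition~\ref{def:daglim}: both outer legs $\id[A]$ and $f$ are partial isometries, giving normalization; and the induced projections $\id[A]^\dag\id[A]=\id[A]$ and $f^\dag f$ commute, giving independence. Hence it is a dagger pullback.

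For the converse, suppose $(P,l_1,l_3)$ is a dagger pullback, necessarily with $l_3=fl_1$ by the cone condition. Since $P$ is in particular an ordinary pullback, I would observe that the canonical isomorphism of cones $P\to A$ is $l_1$ itself: it is the unique map $h$ with $\id[A]\circ h=l_1$ and $f\circ h=l_3$, so $h=l_1$, and a comparison of limit cones is invertible. Normalization at $1\in\Omega$ then makes $l_1$ a partial isometry, and an isomorphism which is a partial isometry is unitary, since cancelling the isomorphism $l_1$ on either side of $l_1l_1^\dag l_1=l_1$ yields $l_1^\dag l_1=\id[P]$ and $l_1l_1^\dag=\id[A]$. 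Finally $f=l_3l_1^{-1}=l_3l_1^\dag$ is the composite of the partial isometry $l_3$ (by normalization at $3\in\Omega$) with the unitary $l_1^\dag$, hence a partial isometry by Remark~\ref{rem:pisetc} — the unitary contributes the projection $\id[P]$, which commutes with $l_3^\dag l_3$. The ``in particular'' clause is then immediate: if all dagger pullbacks exist, apply this to each $f$ with $\id$ on its codomain.

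I do not expect a real obstacle. The independence axiom contributes nothing beyond triviality, and the only computations are the two one-line facts that an isomorphism which is a partial isometry is unitary, and that a partial isometry composed with a unitary is again a partial isometry. The single point needing care is the bookkeeping around $\Omega$: one must invoke normalization at \emph{both} outer objects of the cospan, and confirm that the exact choice of weakly initial class is irrelevant for this diagram.
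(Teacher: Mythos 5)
Your proof is correct and follows essentially the same route as the paper: for the ``if'' direction exhibit $A$ itself with cone $(\id[A],f)$, and for the converse use the comparison isomorphism with the pullback $A$, observe that the leg over $A$ is simultaneously an isomorphism and a partial isometry (hence unitary), and conclude that $f$ is a partial isometry as the composite of a unitary and a partial isometry. Your extra bookkeeping about which weakly initial classes are possible and why the leg over the cospan's apex is irrelevant is a correct elaboration of details the paper leaves implicit.
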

\begin{proof} 
  If $f$ is a partial isometry, then $A$ is a dagger pullback of $f$ along $\id[B]$, with cone $\id[A]$ and $f$.
  For the converse, let $P$ be a dagger pullback of $f$ along $\id[B]$, with cone $p_A \colon P \to A$ and $p_B \colon P \to B$. Since $A$ is also a pullback, there exists a unique isomorphism $g\colon A\to P$ making the diagram 
  \[\begin{tikzpicture}
         \matrix (m) [matrix of math nodes,row sep=2em,column sep=4em,minimum width=2em]
         {
          A \\
          &P & B \\
          &A & B \\};
         \path[->]
         (m-1-1) edge[out=-90,in=180] node [left] {$\id$} (m-3-2)
                 edge node [right=2mm] {$g$} (m-2-2)
                 edge[dagger,out=0] node [above] {$f$} (m-2-3)
         (m-2-2) edge[dagger] node [left] {$p_A$} (m-3-2)
                edge[dagger] node [above] {$p_B$} (m-2-3)
         (m-2-3) edge node [right] {$\id$} (m-3-3)
         (m-3-2) edge[dagger] node [below] {$f$} (m-3-3);
  \end{tikzpicture}\]
  commute. Since $g$ is an isomorphism, this implies that $p_A=g^{-1}$. Because $p_A$ is a partial isometry, both $p_A$ and $g$ are therefore unitary. Now $f=p_Bg$ means that $f$ factors as the composite of a unitary morphism and a partial isometry and hence is a partial isometry itself.
\end{proof}

It follows from the previous lemma that in a dagger category with dagger pullbacks, every subobject is a dagger subobject since every monomorphism is dagger monic.

Recall that a monoid $(M,+,0)$ is \emph{cancellative} when $x+z=y+z$ implies $y=z$.

\begin{lemma}\label{lem:matrixcalculus}
  If a dagger category has finite dagger products, then it is uniquely enriched in commutative monoids and admits a matrix calculus. If it furthermore has dagger equalizers, then addition is cancellative.
\end{lemma}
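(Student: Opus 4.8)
The plan is to establish the three assertions in order: first that finite dagger products force a (necessarily unique) commutative monoid enrichment, then that this gives a matrix calculus, and finally that dagger equalizers make the addition cancellative.

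For the enrichment, I would use that a finite dagger product $A_1 \oplus \cdots \oplus A_n$ is in particular a biproduct in the traditional sense once we know it carries zero morphisms. By Example~\ref{ex:biproducts} a dagger product with zero morphisms satisfies the usual biproduct equations $p_i p_i^\dag = \id$ and $p_i p_j^\dag = 0$ for $i\neq j$; and the terminal object (the empty dagger product) is a zero object, which supplies the zero morphisms $0_{A,B} \colon A \to A_0 \to B$. The standard categorical fact that a category with finite biproducts is enriched in commutative monoids via $f+g = \nabla \circ (f\oplus g)\circ \Delta$ then applies verbatim; here $\Delta\colon A \to A\oplus A$ and $\nabla \colon A\oplus A\to A$ are the diagonal and codiagonal built from the universal properties. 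For uniqueness of the enrichment: in any category with finite biproducts the addition is forced, since $f+g$ must equal the composite above once the biproduct maps are fixed, and the biproduct maps are themselves determined up to the canonical iso; more carefully, any enrichment compatible with composition and with the biproduct structure recovers $f+g$ as $\nabla(f\oplus g)\Delta$, so there is at most one. This is classical (Mac Lane), and I would cite it rather than reprove it, spending at most a sentence checking the dagger interacts well, e.g. $\Delta^\dag = \nabla$ up to the identifications, so that $(f+g)^\dag = f^\dag + g^\dag$.

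The matrix calculus is then immediate: once $\cat{C}$ has finite biproducts and hom-sets are commutative monoids with bilinear composition, a morphism $\bigoplus_i A_i \to \bigoplus_j B_j$ is the same data as a matrix $(f_{ji})$ with $f_{ji} = q_j \circ f \circ p_i^\dag \colon A_i \to B_j$, composition is matrix multiplication, and identities are identity matrices. I would state this in one or two lines, noting that the dagger of a morphism corresponds to the conjugate-transpose of its matrix (transpose of the matrix of daggers), which follows from $(q_j f p_i^\dag)^\dag = p_i f^\dag q_j^\dag$ together with $p_i^\dag, q_j^\dag$ being the coprojections.

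The last assertion is where the real work lies, so I expect cancellativity to be the main obstacle. The strategy: given $f + h = g + h$ with $f,g,h\colon A \to B$, I want $f = g$. The idea is to realize $f+h$ and $g+h$ as legs of a cone or as factoring through a dagger equalizer and exploit that a dagger equalizer $e$ is dagger monic, so $e^\dag e = \id$, giving genuine cancellation rather than mere monicity. Concretely, consider the two morphisms $\binom{f}{h}, \binom{g}{h} \colon A \to B\oplus B$ (using the matrix calculus); one can also use the pair of maps $(\id_B, \id_B)\colon B \oplus B \rightrightarrows B$ via the two projections versus... —the cleanest route is probably: form the equalizer of a suitable parallel pair built from $f,g,h$ so that the equalizing condition is exactly $f = g$, arrange that the common value $a\colon A\to \cdot$ factors through it because $f+h=g+h$, then use that the dagger equalizer $e$ being an isometry forces the relevant composite to be split, upgrading "factors through $e$" to an equation. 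I would look for the parallel pair $\langle \id, f-g\rangle$-style maps, but since we do not yet have subtraction, I would instead use $B\oplus B \rightrightarrows B\oplus B$ given in matrix form by $\begin{psmallmatrix}1&0\\0&1\end{psmallmatrix}$ and $\begin{psmallmatrix}1&0\\1&0\end{psmallmatrix}$... — let me instead plan to mimic the classical proof that split-idempotent-rich additive-like categories with equalizers are cancellative: the equalizer of $r, s\colon B\oplus B \to B$ where $r = p_1$ (first projection, as a map of the biproduct, i.e. $q_1^\dag$ precomposed) and $s = p_1 + (p_2 - \text{something})$ is not available, so the honest plan is: take the equalizer $e\colon E \to A\oplus A$ of the two maps $A\oplus A \rightrightarrows B$ given by $(x,y)\mapsto fx + hy$ and $(x,y)\mapsto gx+hy$; the hypothesis says $(\id_A,\id_A)\colon A \to A\oplus A$ equalizes them, hence factors as $e\circ t$ for a unique $t$; composing with the two biproduct projections $A\oplus A \to A$ and using that $e$ is dagger monic (so left-cancellable in the strong sense that $e^\dag e =\id$, and also just monic) I can chase that $p_1 e t = p_2 e t$, and then—this is the delicate point—I must extract $f = g$, which I expect to require also evaluating the equalizer condition on a generic probe or using the universal property once more. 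I anticipate the bookkeeping here is the crux; the key leverage is always "dagger monic $\Rightarrow$ isometry $\Rightarrow$ no torsion/cancellable", so I would organize the argument to push every obstruction onto an isometry and then kill it. If the direct equalizer argument stalls, the fallback is the known abstract result that a category enriched in commutative monoids in which every idempotent splits and which has equalizers has cancellative addition (cf.\ the characterization of additive categories), combined with the fact from Lemma~\ref{lem:dagpbs->everythingispi}-style reasoning that dagger equalizers give enough splittings; but I believe the hands-on equalizer computation is both shorter and more in keeping with the paper's style, so that is what I would write up, flagging the final extraction step as the one demanding care.
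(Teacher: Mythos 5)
Your handling of the first two claims is correct and is essentially what the paper does: the empty dagger product is a terminal object, hence a zero object, so finite dagger products are honest biproducts, and the classical fact that finite biproducts yield a unique commutative-monoid enrichment and a matrix calculus can simply be cited (the paper cites Mitchell and others for exactly this).

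The cancellativity claim is where your proposal has a genuine gap, and you flag the weak point yourself. In your primary construction --- the dagger equalizer $e\colon E\to A\oplus A$ of the two maps $A\oplus A\to B$ with components $(f,h)$ and $(g,h)$, through which the diagonal $\Delta=\langle\id,\id\rangle$ factors as $\Delta=et$ --- there is no way to extract $f=g$: composing the equalizing equation $f\,p_1e+h\,p_2e=g\,p_1e+h\,p_2e$ with $t$ merely reproduces the hypothesis $f+h=g+h$, and the only probe that would isolate $f$ and $g$ is the injection $q_1=\langle\id,0\rangle\colon A\to A\oplus A$, which factors through $e$ precisely when $f+0=g+0$, i.e.\ precisely when the conclusion already holds. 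So that route is circular. Your fallback is not available either: it is \emph{false} that a category enriched in commutative monoids with equalizers and split idempotents has cancellative addition. The category of commutative monoids is a counterexample: it is semiadditive, complete and cocomplete, and idempotents split, yet for the two-element join-semilattice $N=\{0,\infty\}$ one has $\id[N]+\id[N]=\id[N]+0$ while $\id[N]\neq 0$. Cancellativity genuinely requires using that the equalizer is \emph{dagger} monic (an isometry), which neither of your two routes exploits at the decisive moment. The paper sidesteps the issue by citing~\cite[2.6]{vicary2011completeness} for this step; a self-contained write-up would need to reproduce that argument rather than either of the ones sketched here.
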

\begin{proof} 
  Dagger products are in particular biproducts, and it is well known that biproducts make a category uniquely semiadditive; see \eg~\cite[18.4]{mitchell}, \cite[2.4]{vicary2011completeness}, or~\cite[1.1]{heunen:semimodules}.
  For the claim that dagger equalizers make addition cancellative, we refer to~\cite[2.6]{vicary2011completeness}.
\end{proof}

We can now prove the theorems stated in the beginning of this section.

\begin{proof}[of Theorem~\ref{thm:finitelydagcompleteimpliesindiscrete}] 
  Let $f \colon A \to B$; we will prove that $f=0_{A,B}$. By Lemma~\ref{lem:dagpbs->everythingispi}, the tuple $\langle \id, f\rangle \colon A\to A\oplus B$ is a partial isometry. Expanding this fact using the matrix calculus results in the equations $\id+f^\dag f=\id$ and $f=f+f$. Applying cancellativity of addition to the latter equation now gives $f=0$.
\end{proof}

\begin{proof}[of Theorem~\ref{thm:dagcompleteimpliesindiscrete}] 
  Let $f \colon A \to B$; we will prove that $f=0_{A,B}$. 
  Observe that in Lemma~\ref{lem:matrixcalculus}, infinite dagger products induce the ability to add infinitely many parallel morphisms in the same way as binary dagger products enable binary addition: if $f_i\colon A\to B$ is an $I$-indexed family of morphisms and \cat{C} has $\abs{I}$-ary dagger products, the sum $\sum f_i$ can be defined as the composite $\Delta^\dag_B(\bigoplus_i f_i)\Delta_A\colon A\to\bigoplus_i A\to \bigoplus_i B\to B$, where $\Delta_A\colon A\to\bigoplus_{i} A$ is the diagonal map. This implies that the category is enriched in $\Sigma$-monoids~\cite{haghverdiscott:goi}. 
  Hence the following variant of the Eilenberg swindle makes sense.
  \begin{align*} 
    0+(f+f+\cdots)
    &=f+f+\cdots \\
  &=f+(f+f+\cdots)
  \end{align*}
  It now follows from cancellativity (of binary addition) that $f=0$.
\end{proof}

Note that both theorems rest on an interplay between various dagger limits. Indeed, none of the dagger limits in question are problematic on their own: \cat{Hilb} has dagger equalizers and finite dagger products, \cat{Rel} has arbitrary dagger products, and \cat{PInj} has dagger pullbacks. 

Given Theorems~\ref{thm:finitelydagcompleteimpliesindiscrete} and~\ref{thm:dagcompleteimpliesindiscrete}, dagger completeness is not an interesting concept. However, below we define a (finite) completeness condition for dagger categories, and show that it is equivalent to the existence of dagger products, dagger equalizers, and dagger intersections.

\begin{definition}\label{def:based}
  A class $\Omega$ of objects of a category $\cat{J}$ is called a \emph{basis} when every object $B$ allows a unique $A \in \Omega$ making $\cat{J}(A,B)$ non-empty.
  The category $\cat{J}$ is called \emph{based} when there exists a basis, and \emph{finitely based} when there exists a finite basis.
  We say a dagger category \cat{C} has \emph{(finitely) based dagger limits}, or that it is \emph{(finitely) based dagger complete} if for every category \cat{J} with a (finite) basis $\Omega$, and any diagram $D\colon\cat{J}\to\cat{C}$ the dagger limit of $(D,\Omega)$ exists.
\end{definition} 

\begin{example}
  The shape $\bullet \rightrightarrows \bullet$, giving rise to equalizers, is finitely based. Any (finite) discrete category, the shape giving rise to (finite) products, is (finitely) based. Any  indiscrete category is also finitely based. A category is (finitely) based if and only if it is the (finite) disjoint union of categories, each having a weakly initial object. In particular, any dagger category is based and is finitely based iff it has finitely many connected components.
\end{example}

Since both dagger equalizers and arbitrary dagger products exist in a based dagger complete category, all such categories are indiscrete by Theorem~\ref{thm:dagcompleteimpliesindiscrete} and hence the notion is uninteresting. However, being finitely (based) complete need not trivialize the category as seen below in Corollary~\ref{cor:hilbiscomplete}.

\begin{example}
  Finitely based diagrams need not be finite. For example, given any family $f_i\colon A\to B$ of parallel arrows, their joint dagger equalizer is the dagger limit of a finitely based diagram. Similarly, a dagger intersection is the dagger limit of a diagram of dagger monomorphisms $m_i \colon A_i \to B$, \ie a wide (dagger) pullback. Whenever there are at least two monomorphisms, this diagram is not a based category. However, we may also obtain the dagger intersection as the dagger limit of the diagram consisting of themaps $m_im_i^\dag \colon B \to B$, \ie as the limit of the induced projections on $B$. Then we may always take $\Omega$ to be a singleton. Hence being finitely based dagger complete is a stronger requirement than being finitely complete.
\end{example}

\begin{theorem} \label{thm:finitecompleteness}
  A dagger category is finitely based dagger complete if and only if it has dagger equalizers, dagger intersections and finite dagger products.
\end{theorem}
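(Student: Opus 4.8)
The plan is to prove both implications, with essentially all of the content in the direction that builds arbitrary finitely based dagger limits out of the three listed classes. The easy direction is immediate: if \cat{C} is finitely based dagger complete, then the shape $\bullet\rightrightarrows\bullet$ is finitely based so dagger equalizers exist, every finite discrete category is finitely based so finite dagger products exist, and for dagger intersections I would invoke the reduction already recorded in Example~\ref{ex:daggershaped} and after Definition~\ref{def:based}: the dagger intersection of a small family of dagger monics $m_i\colon A_i\to B$ is the dagger limit of the diagram of projections $m_im_i^\dag$ sitting on the single object $B$, and that diagram has a one-object, hence finitely based, shape.

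For the substantive direction, assume \cat{C} has dagger equalizers, dagger intersections, and finite dagger products. Let $\cat{J}$ be a small category with finite basis $\Omega=\{A_1,\dots,A_n\}$ and $D\colon\cat{J}\to\cat{C}$ a diagram. Using the characterisation of finitely based categories, write $\cat{J}=\coprod_{k=1}^{n}\cat{J}_k$ as a finite disjoint union with $A_k$ weakly initial in the connected component $\cat{J}_k$. I would first handle a single component and then glue. Fix $k$. For every object $B$ of $\cat{J}_k$ and every parallel pair $f,g\colon A_k\to B$ in $\cat{J}_k$, form the dagger equalizer $e_{B,f,g}\colon E_{B,f,g}\rightarrowtail D(A_k)$ of $D(f),D(g)$, and let $e_k\colon L_k\rightarrowtail D(A_k)$ be the dagger intersection of this small family of dagger monics. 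A morphism into $D(A_k)$ factors (uniquely) through $e_k$ exactly when it equalizes all these pairs, and a short computation shows this is precisely the condition for it to extend — necessarily via $x_B:=D(f)x$ for any $f\colon A_k\to B$ — to a cone over $D|_{\cat{J}_k}$, compatibly with every morphism of $\cat{J}_k$. Hence $(L_k,(D(f)e_k)_B)$ is a limit of $D|_{\cat{J}_k}$; since its leg at $A_k$ is the dagger monic $e_k$, and $\Omega\cap\cat{J}_k=\{A_k\}$ makes independence vacuous, it is a dagger limit of $(D|_{\cat{J}_k},\{A_k\})$.

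To glue, form the finite dagger product $L=L_1\oplus\cdots\oplus L_n$ with projections $\pi_k\colon L\to L_k$; a finite dagger product includes a terminal, hence zero, object (Example~\ref{ex:concretedaggerlimits}), so \cat{C} has zero morphisms and $\pi_k\pi_k^\dag=\id$, $\pi_k\pi_j^\dag=0$ for $k\neq j$. As $\cat{J}$ is the disjoint union of the $\cat{J}_k$, a cone over $D$ is exactly an $n$-tuple of cones over the $D|_{\cat{J}_k}$, so $L$ with legs $l_B:=D(f)\,e_k\,\pi_k$ (for $B\in\cat{J}_k$) is a limit of $D$. Its leg at $A_k$ is $e_k\pi_k$, a composite of the partial isometries $\pi_k$ and $e_k$ whose relevant projections $\pi_k\pi_k^\dag=\id$ and $e_k^\dag e_k=\id$ commute trivially, hence a partial isometry by Remark~\ref{rem:pisetc} (normalization); and for $k\neq j$ both $l_{A_j}^\dag l_{A_j}l_{A_k}^\dag l_{A_k}$ and $l_{A_k}^\dag l_{A_k}l_{A_j}^\dag l_{A_j}$ collapse to $\pi_j^\dag\pi_j\pi_k^\dag\pi_k=0$ using $e_i^\dag e_i=\id$ and $\pi_j\pi_k^\dag=0$, while for $k=j$ they agree trivially (independence). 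Thus $L$ is a dagger limit of $(D,\Omega)$.

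The one delicate point I expect is the claim in the connected case: that a generalized element of $D(A_k)$ lies in the joint dagger equalizer of the parallel pairs with source $A_k$ if and only if it extends to a cone over the entire component $\cat{J}_k$ — that is, that reachability from $A_k$ together with equalizing those pairs imposes no further zigzag-type constraints, even when $\cat{J}_k$ is infinite. Everything else — the biproduct bookkeeping for normalization and independence, and the easy direction — is routine given Remark~\ref{rem:pisetc} and the reductions already present in the paper.
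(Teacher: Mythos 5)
Your proof is correct and is essentially the paper's own construction — dagger equalizers of all parallel pairs out of each basis object, a dagger intersection of these per basis object, and a finite dagger product over the basis — merely phrased component-by-component before gluing, with the same normalization and independence computations. The "delicate point" you flag is not actually delicate: for any $h\colon B\to C$ in the component of $A_k$ and any $x$ equalizing all parallel pairs out of $A_k$, choosing $f\colon A_k\to B$ and $g\colon A_k\to C$ gives $D(h)x_B=D(hf)x=D(g)x=x_C$ because $hf$ and $g$ form a parallel pair out of $A_k$, so no zigzag constraints beyond the equalized pairs ever arise.
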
 
\begin{proof}
  One direction is obvious, because the shapes of dagger equalizers, dagger intersections, and finite dagger products are all  finitely based. For the converse, assume that a dagger category $\cat{C}$ has dagger equalizers, dagger intersections and finite dagger products. Let $\cat{J}$ have a finite basis $\Omega$. For a given diagram $D\colon\cat{J}\to\cat{C}$, we will construct a dagger limit of $(D,\Omega)$. Fix $A\in\Omega$. For every parallel pair $f,g\colon A\to B$ of morphisms in \cat{J}, pick a dagger equalizer $e_{f,g}\colon E_{f,g}\to D(A)$ of $D(f)$ and $D(g)$. Pick a dagger intersection $m_A\colon L_A\to D(A)$ of all $e_{f,g}$. Pick a dagger product $L$ of $L_A$ for all $A \in \Omega$, and write $p_A \colon L \to L_A$ for the product cone. Given a morphism  $f\colon A\to B$ in $\cat{J}$ with $A\in \Omega$, define $l_A$ as the composite $D(f) m_Ap_A\colon L\to L_A\to D(A)\to D(B)$. Since $\Omega$ is a basis, the choice of $A$ is forced on us. By construction $l_A$ is independent of the choice of $f\colon A\to B$. It is easy to see that $l_A \colon L \to D(A)$ is a limiting cone, so it remains to check the normalization and independence axioms for $\Omega$. Given $A\in\Omega$, 
  \begin{align*}
    l_Al_A^\dag l_A&=m_Ap_A p_A^\dag m_A^\dag m_Ap_A \\
    &= m_Ap_A p_A^\dag p_A &&\text{ since }m_A\text{ is an isometry}\\
    &= m_Ap_A &&\text{ since }p_A\text{ is a partial isometry}\\
     &=l_A\text.
  \end{align*}
  Moreover, since $p_A \colon L \to D(A)$ forms a product, $l_A^\dag l_A l_B^\dag l_B=0_L=l_B^\dag l_Bl_A^\dag l_A$ if $A,B\in \Omega$.
\end{proof} 

\begin{corollary}\label{cor:hilbiscomplete} 
  The categories \cat{FHilb} and \cat{Hilb} are finitely based dagger complete.
\end{corollary}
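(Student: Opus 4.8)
The plan is to invoke Theorem~\ref{thm:finitecompleteness}, which reduces the claim to checking that \cat{Hilb} and \cat{FHilb} each have dagger equalizers, dagger intersections, and finite dagger products. All three are familiar Hilbert-space constructions, and the common theme is that the relevant ordinary limit is realized by a closed subspace (or an orthogonal direct sum), whose canonical inclusion is automatically an isometry and hence a dagger monomorphism.

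For finite dagger products, take the orthogonal direct sum $H_1 \oplus \cdots \oplus H_n$ with the coordinate projections $p_i$; one checks $p_i p_i^\dag = \id[H_i]$ and $p_i p_j^\dag = 0$ for $i \ne j$, so this is a dagger product in the sense of Example~\ref{ex:concretedaggerlimits}, and it remains finite-dimensional when all the $H_i$ are. For the dagger equalizer of parallel maps $f, g \colon A \to B$, take $E = \{ a \in A \mid f(a) = g(a) \}$, which is the kernel of the bounded operator $f - g$ and hence a closed subspace of $A$; its inclusion $E \hookrightarrow A$ is an isometry and is the equalizer of $f$ and $g$, and $E$ is finite-dimensional whenever $A$ is.

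For dagger intersections of dagger monomorphisms $m_i \colon A_i \to B$, observe that each $m_i$ is a norm-isometry, so its image $V_i \subseteq B$ is complete and therefore a closed subspace, with $m_i$ corestricting to a unitary $A_i \cong V_i$. The intersection $\bigcap_i V_i$ is again a closed subspace of $B$; equipped with the inclusion into $B$ and with legs $\bigcap_j V_j \to A_i$ given by the inverses of these unitaries, it is a wide pullback of the $m_i$ all of whose legs are isometries, i.e.\ a dagger intersection, and it is finite-dimensional when $B$ is.

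With dagger equalizers, dagger intersections, and finite dagger products available in both categories, Theorem~\ref{thm:finitecompleteness} immediately yields that \cat{FHilb} and \cat{Hilb} are finitely based dagger complete. The only step requiring more than routine bookkeeping is the appeal to completeness in the infinite-dimensional case — namely, that isometric images and arbitrary intersections of closed subspaces remain closed — but this is classical, so there is essentially no genuine obstacle here.
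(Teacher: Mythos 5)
Your proposal is correct and matches the paper's own argument: both reduce the claim to Theorem~\ref{thm:finitecompleteness} and verify dagger equalizers, finite dagger products, and dagger intersections by realizing each as a closed subspace (or orthogonal direct sum) with its canonical isometric inclusion. The paper's proof is simply a terser version of the same reasoning, noting in particular that dagger intersections are set-theoretic intersections of closed subspaces.
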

\begin{proof} 
  Both categories have dagger equalizers and dagger products. To see that dagger intersections exist, note that a dagger monic corresponds to an inclusion of a closed subspace, and hence the intersection can be computed as the set-theoretic intersection of the corresponding closed subspaces, which is still closed. The result now follows from Theorem~\ref{thm:finitecompleteness}.
\end{proof}

The following theorem characterizes dagger categories having dagger-shaped limits similarly.

\begin{theorem}\label{thm:dagshapedcompleteness}
  Let $\kappa$ be a cardinal number. A dagger category has dagger-shaped limits of shapes with at most $\kappa$ many connected components if and only if it has dagger split infima of projections, dagger stabilizers, and dagger products of at most $\kappa$ many objects. 
\end{theorem}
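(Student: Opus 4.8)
The plan is to prove both implications; the forward one is routine, the converse needs a two-stage construction. \textbf{Forward direction.} Each of the three ingredients is a dagger-shaped limit (of a dagger functor) whose shape has at most $\kappa$ connected components: a dagger product of at most $\kappa$ objects is a dagger-shaped limit of a discrete shape with at most $\kappa$ components; a dagger stabilizer is a $\cat{ZigZag}(\cat{E})$-shaped limit and $\cat{ZigZag}(\cat{E})$ is connected; and a dagger split infimum of projections is, as in Example~\ref{ex:daggershaped}, a dagger-shaped limit over the connected monoid freely generated by a set of idempotents. So all three exist once $\cat{C}$ has dagger-shaped limits of shapes with at most $\kappa$ components. (We assume $\kappa\ge 1$; the case $\kappa=0$ degenerates.)

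\textbf{Converse, connected case.} Let $\cat{J}$ be connected, $D\colon\cat{J}\to\cat{C}$ a dagger functor, and fix an object $A_0$, so that every $\cat{J}(A_0,A)$ is inhabited. Two observations: (1) a cone $(c_A\colon X\to D(A))_A$ on $D$ is the same as a map $c\colon X\to D(A_0)$ with $D(f)c=D(g)c$ for every parallel pair $f,g\colon A_0\to A$ in $\cat{J}$, the remaining legs being $c_A:=D(h)c$ for any $h\colon A_0\to A$ (in particular $D(f)c=c$ for every endomorphism $f$ of $A_0$); (2) since $D$ is a dagger functor and $\cat{J}$ connected, Example~\ref{ex:daggershaped} gives that every leg $l_A$ of a dagger limit of $D$ is a partial isometry with $l_A^\dag l_A=l_{A_0}^\dag l_{A_0}=:e$, so that $(l_A e)_A=(l_A)_A$ is a cone endomorphism and $e=\id[L]$ by uniqueness of mediating maps, i.e.\ every leg is dagger monic. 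Now for each object $A$ and each parallel pair $f,g\colon A_0\to A$, take a dagger stabilizer $X_{f,g}$ of $D(f),D(g)$ with $A_0$-leg $s_{f,g}$, and put $\pi_{f,g}:=s_{f,g}s_{f,g}^\dag$, a projection on $D(A_0)$ since $s_{f,g}$ is a partial isometry. As $\cat{J}$ is small this is a set-indexed family of projections, so by hypothesis it has an infimum with a dagger splitting $l_{A_0}\colon L\to D(A_0)$, and we set $l_A:=D(h)l_{A_0}$ for any $h\colon A_0\to A$. The crux is the chain of equivalences, for $c\colon X\to D(A_0)$: $c$ extends to a cone on $D$ $\iff$ $D(f)c=D(g)c$ for all such pairs $\iff$ $c$ factors through every $X_{f,g}$ $\iff$ $\pi_{f,g}c=c$ for all pairs $\iff$ $(\bigwedge_{f,g}\pi_{f,g})c=c$ $\iff$ $c$ factors through $l_{A_0}$; here the second equivalence is the subtle step (see below), the third is Remark~\ref{rem:pisetc}, and the last uses $l_{A_0}^\dag l_{A_0}=\id[L]$. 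From this, $l_A$ is independent of $h$, the family $(l_A)_A$ is a limiting cone, and $l_{A_0}$ — hence, by Example~\ref{ex:daggershaped}, every $l_A$ — is a partial isometry with all $l_A^\dag l_A$ equal; so $L$ is a dagger limit of $D$.

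\textbf{Converse, general case.} Write $\cat{J}=\coprod_{i\in I}\cat{J}_i$ for the connected-component decomposition, so $|I|\le\kappa$; each $\cat{J}_i$ is a connected dagger category, $D$ restricts to a dagger functor $D_i$, and by the connected case $D_i$ has a dagger limit $L_i$ with cone $l^i_A$, all legs dagger monic. Form a dagger product $L=\bigoplus_{i\in I}L_i$ with projections $p_i$, and set $l_A:=l^i_A p_i$ for $A\in\cat{J}_i$. Then $(l_A)_A$ is a cone; it is limiting because cones on $D$ restrict to cones on the $D_i$ and the $p_i$ form a product; normalization holds because $(l^i_A)^\dag l^i_A=\id[L_i]$ commutes with the projection $p_ip_i^\dag$, making $l^i_A p_i$ a partial isometry by Remark~\ref{rem:pisetc}; and independence holds because $l_A^\dag l_A=p_i^\dag p_i$, and these commute across distinct $i$ by the independence axiom of the dagger product $L$.

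\textbf{Main obstacle.} The genuinely delicate point is the equivalence ``$D(f)c=D(g)c$ for all pairs'' $\iff$ ``$c$ factors through every $X_{f,g}$''. Factoring through the dagger stabilizer $X_{f,g}$ imposes, beyond $D(f)c=D(g)c$, the extra conditions $D(f^\dag f)c=D(g^\dag g)c=c$ — recall a dagger stabilizer is strictly weaker than a dagger equalizer — and these are exactly what makes $X_{f,g}$ a \emph{dagger} subobject, hence representable by the projection $\pi_{f,g}$, so that $\bigwedge_{f,g}\pi_{f,g}$ computes the intersection of the subobjects $X_{f,g}\rightarrowtail D(A_0)$. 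The equivalence nevertheless holds because these extra conditions are automatic for any cone: $f^\dag f$ and $g^\dag g$ are endomorphisms of $A_0$, and the $D(A_0)$-component of a cone over a connected dagger diagram is fixed by $D$ of every such endomorphism. Carefully verifying this compatibility — and that the infimum of the $\pi_{f,g}$ really does compute the intersection of the corresponding dagger subobjects — is where the real work lies.
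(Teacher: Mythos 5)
Your proof is correct and follows essentially the same route as the paper's: reduce to the connected case, where the dagger limit is built as the dagger splitting of the infimum of the projections induced by the dagger stabilizers of all parallel pairs out of a fixed object $A_0$, and then assemble the general case as a dagger product of the component limits. The subtle points you flag (the extra cone conditions $D(f^\dag f)c=c$ being absorbed by the endomorphism case of the parallel-pair condition, and the splitting of $\bigwedge\pi_{f,g}$ having the requisite universal property) are handled no more explicitly in the paper's own proof, so your treatment is if anything slightly more careful.
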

\begin{proof}
  One implication is trivial. For the other, we first reduce to the case of a single connected component. Given $D\colon\cat{J}\to\cat{C}$, with connected components $\cat{J}_i$, assume that we can use dagger limits of projections and dagger stabilizers to build a dagger limit $L_i$ of each restriction $D_i$. Then we can define the dagger limit of $D$ by taking the dagger product of all the $L_i$. For $L$ is a limit of $D$, as each $L_i$ is a limit of $D_i$. Moreover, since $L_i\to D_i(A)$ is a partial isometry, $L_i\to D_i(A)\to L_i$ is the identity. Therefore each $L\to L_i\to D(A)$ is a partial isometry. If $A$ and $B$ are in the same connected component of $\cat{J}$, then $l_A^\dag l_A=l_B^\dag l_B$. If $A$ and $B$ are in different connected components, then $l_Bl_A=0_{DA,DB}$. Hence the independence equations are satisfied, and $L$ indeed is the dagger limit of $D$. 

  Now focus on the case when $\cat{J}$ is connected. Pick any object $A$ of \cat{J}. For every $f,g\colon A\to B$, take the dagger stabilizer $E_{f,g}\to D(A)$ of $D(f)$ and $D(g)$, and let $p_{f,g}$ be the induced projection $D(A)\to E_{f,g}\to D(A)$ on $D(A)$. Let $\mathcal{P}$ be the set of all such projections, and let $l\colon L\to D(A)$ be the limit. For each $B$ in \cat{J}, choose a morphism $f\colon A\to B$, and define $l_B \colon L\to D(B)$ as $D(f)\circ l$. By construction, $l_B$ is independent of the choice of $f$. We claim that this makes $L$ into a dagger limit of $D$. First, to see it is a cone, take an arbitrary $g\colon B\to C$. Then $D(g)l_B=D(g)D(f)l_A=D(gf)l_A=l_C$, as desired. Because $L\to  D(A)$ is a partial isometry, so is each $L\to D(B)$, as in Example~\ref{ex:daggershaped}. Moreover, $l_A^\dag l_A=l_B^\dag l_B$ when $A,B\in\cat{J}$. Finally, to show that $L$ is indeed a limit, take an arbitrary cone $m_B \colon M \to D(B)$. As it is a cone, it is uniquely determined by $m_A$. But $m_A$ must factor through each stabilizer, so $m_A$ is also a cone for $\mathcal{P}$. The universal property of $L$ ensures that $m_A$ factors through $l_A$. 
\end{proof}

\begin{corollary} 
  The dagger category \cat{Rel} has all dagger-shaped limits, and the dagger categories \cat{FinRel} and $\cat{Span}(\cat{FinSet})$ have all dagger-shaped limits with finitely many connected components.
\end{corollary}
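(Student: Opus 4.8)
The plan is to derive this from Theorem~\ref{thm:dagshapedcompleteness} together with Proposition~\ref{prop:variouscatshavedagshapedlims}. Take $\kappa$ to be an arbitrary cardinal in the case of $\cat{Rel}$, and restrict to shapes with finitely many connected components in the cases of $\cat{FinRel}$ and $\cat{Span}(\cat{FinSet})$. By Theorem~\ref{thm:dagshapedcompleteness} it then suffices to check three things in each category: that it has dagger split infima of projections, dagger stabilizers, and dagger products of the relevant cardinality. The first two are dagger limits of connected shapes --- the one-object monoid freely generated by a family of idempotents, and $\cat{ZigZag}(\cat{E})$, respectively --- so Proposition~\ref{prop:variouscatshavedagshapedlims} already provides them for all four categories, since those shapes are connected.

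So the only genuinely new point is the existence of dagger products. For a family $(A_i)_{i \in I}$ of objects I would take the disjoint union $\coprod_{i} A_i$, with the coprojections $\iota_i$ (a relation, respectively a span with apex $A_i$) as the structure maps of a cocone and their daggers $\iota_i^\dag \colon \coprod_i A_i \to A_i$ as the cone. Each $\iota_i$ is a dagger monomorphism, so $\iota_i^\dag$ is a partial isometry, which gives normalization; and $\iota_i \iota_i^\dag \iota_j \iota_j^\dag = 0$ for $i \ne j$, so the induced projections are pairwise orthogonal and in particular commute, which gives independence. Since disjoint union is the (bi)product in each of $\cat{Rel}$, $\cat{FinRel}$ and $\cat{Span}(\cat{FinSet})$, this cone is limiting, hence it is a dagger product. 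A finite disjoint union of finite sets is finite, so $\cat{FinRel}$ and $\cat{Span}(\cat{FinSet})$ have all finite dagger products, while $\cat{Rel}$ has all small ones; the empty case recovers the zero object $\emptyset$, which all three categories possess.

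I expect the only real work to be bookkeeping in the case of $\cat{Span}(\cat{FinSet})$: one has to unwind the span description of $\iota_i$ and its dagger and compute the relevant composites as pullbacks in $\cat{FinSet}$ to confirm $\iota_i^\dag \iota_i = \id$ and $\iota_i^\dag \iota_j = 0$ for $i \ne j$, and that the cone $(\iota_i^\dag)$ is limiting --- but this is just the standard fact that $\cat{Span}(\cat{C})$ inherits biproducts from finite coproducts in $\cat{C}$. An equivalent and perhaps more self-contained route, avoiding the explicit appeal to Theorem~\ref{thm:dagshapedcompleteness}, is to repeat the one-line reduction from its proof: given $D \colon \cat{J} \to \cat{C}$ with connected components $\cat{J}_i$, use Proposition~\ref{prop:variouscatshavedagshapedlims} to build a dagger limit $L_i$ of each restriction, and then take the dagger product of the $L_i$; the normalization and independence axioms for the resulting cone follow exactly as in that proof, using that legs into different components compose to zero.
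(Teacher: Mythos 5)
Your proof is correct and follows essentially the same route as the paper: the paper's proof is exactly "observe that $\cat{Rel}$ has all small dagger products and that $\cat{FinRel}$ and $\cat{Span}(\cat{FinSet})$ have finite ones, then combine Proposition~\ref{prop:variouscatshavedagshapedlims} with Theorem~\ref{thm:dagshapedcompleteness}". You merely spell out the disjoint-union construction of the dagger products, which the paper leaves as an observation.
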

\begin{proof} 
  Observe that \cat{Rel} has all (small) dagger products, and that similarly \cat{FinRel} and $\cat{Span}(\cat{FinSet})$ have finite dagger products. Combine Proposition~\ref{prop:variouscatshavedagshapedlims} and Theorem~\ref{thm:dagshapedcompleteness}.
\end{proof}

We end this section by comparing our notion of finite dagger completeness to that of~\cite{vicary2011completeness}. Thus we must restrict to categories enriched in commutative monoids. This simplifies things, as in the following lemma. 

\begin{proposition}\label{prop:limswithzeros} 
  Let $\Omega$ be a basis for \cat{J} and $D\colon\cat{J}\to\cat{C}$ a diagram in a category with zero morphisms. A limit $l_A \colon L \to D(A)$ of $D$ is a dagger limit of $(D,\Omega)$ if and only if:
    \begin{itemize}
      \item $l_A$ is a partial isometry whenever $A\in\Omega$; and
      \item $l_Bl_A^\dag=0_{A,B}$ whenever $A,B\in\Omega$ and $A\neq B$.
    \end{itemize}
\end{proposition}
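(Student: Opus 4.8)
The statement is an equivalence, and both directions revolve around translating between the two axioms of Definition~\ref{def:daglim} (normalization and independence) and the conditions stated here, using that $\Omega$ is a \emph{basis} rather than merely weakly initial, and that $\cat C$ has zero morphisms. For the forward direction, suppose $l_A\colon L\to D(A)$ is a dagger limit of $(D,\Omega)$. Normalization gives directly that each $l_A$ with $A\in\Omega$ is a partial isometry, so only the second bullet needs work: I must upgrade the independence equation $l_A^\dag l_A l_B^\dag l_B = l_B^\dag l_B l_A^\dag l_A$ to the stronger statement $l_B l_A^\dag = 0_{A,B}$ when $A\neq B$. The key point is that, because $\Omega$ is a basis, there is \emph{no} morphism connecting distinct $A,B\in\Omega$ in either direction, and there is no object of $\cat J$ receiving maps from both $A$ and $B$; so the cone legs $l_A$ and $l_B$ are ``orthogonal'' in the sense that nothing forces them to interact. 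I expect to argue as follows: the projections $e_A := l_A^\dag l_A$ and $e_B := l_B^\dag l_B$ commute, so $e_A e_B$ is again a projection; I want to show it is $0_L$. One clean way is to observe that $l_A e_B = l_A e_A e_B = l_B^\dag (\text{something})$... more carefully, since there is no zig-zag in $\cat J$ joining $A$ to $B$, one can modify the cone by composing with $e_A e_B$ and check it still gives a cone factoring through $L$ in two ways, forcing $e_A e_B = \id[L]$ is too strong --- instead the right move is to use that the subobject $\im(e_A)$ of $L$, equipped with the restricted cone, is a cone whose $B$-leg can be killed, whence by the universal property the mediating map is below both $e_A$ and $\id - e_B$; iterating over all of $\Omega$ and using that $\Omega$ reaches every object forces $e_A e_B = 0$. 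Then $l_B l_A^\dag = l_B e_A l_A^\dag \cdot(\dots)$; concretely $l_B l_A^\dag = l_B (l_B^\dag l_B) l_A^\dag = \dots$ and using $e_A e_B = 0$ one gets $l_B l_A^\dag = l_B l_B^\dag l_B l_A^\dag l_A l_A^\dag$, and inserting $e_B e_A = 0$ collapses this to $0$.

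\textbf{The converse direction} is the routine half: assume the two bullet conditions hold for a limit $l_A\colon L\to D(A)$. Normalization is immediate from the first bullet. For independence, I compute $l_A^\dag l_A l_B^\dag l_B$; if $A = B$ this is trivially symmetric, and if $A\neq B$ then $l_B l_A^\dag = 0_{A,B}$ gives $l_A^\dag l_A l_B^\dag l_B = l_A^\dag (l_A l_B^\dag) l_B = l_A^\dag \, 0 \, l_B = 0_L$, and symmetrically $l_B^\dag l_B l_A^\dag l_A = 0_L$, so the two sides agree. Hence $l_A$ satisfies normalization and independence and is a dagger limit of $(D,\Omega)$.

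\textbf{Main obstacle.} The genuinely delicate step is the forward direction's upgrade from commuting projections to $l_B l_A^\dag = 0$. This is where I must exploit that $\Omega$ is a basis and not just weakly initial --- if $\Omega$ merely weakly initial, as the examples in the paper (e.g.\ dagger equalizers with $\Omega$ a single object, or the indiscrete-category examples) show, this stronger orthogonality fails. The clean argument should go through the observation that for distinct $A,B\in\Omega$ there is no object $C\in\cat J$ admitting morphisms from both $A$ and $B$ (indeed any $C$ is reached by a \emph{unique} member of $\Omega$), so that the two ``halves'' of $L$ cut out by $e_A$ and $e_B$ see disjoint parts of the diagram; modifying the limiting cone $l$ by post-composing each leg with the projection onto $\im(e_A)\wedge\im(e_B)$ and checking it is still a cone, then invoking uniqueness of the mediating map, forces that projection to be both $\id[L]$-like on its image and to annihilate one of the legs, which is only consistent if $e_A e_B = 0_L$. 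I would present this via the matrix/zero-morphism calculus once $e_A e_B = 0$ is in hand, since from there $l_B l_A^\dag = 0$ is a short computation. Everything else --- that $l_A$ is a partial isometry, the bookkeeping with daggers --- is mechanical.
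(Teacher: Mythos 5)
The converse direction of your argument (the two bullet conditions imply normalization and independence) is correct and is exactly the paper's one-line computation. The forward direction, however, has a genuine gap. You correctly identify the structural fact that makes the proposition true --- since $\Omega$ is a basis, distinct $A,B\in\Omega$ reach disjoint parts of $\cat{J}$ --- but the argument you sketch to convert this into $l_Bl_A^\dag=0$ does not go through. Concretely: (a) you invoke ``$\id-e_B$'', but the category is only assumed to have zero morphisms, not an enrichment in which subtraction (or even addition) makes sense; (b) you invoke ``$\im(e_A)$'' and ``the projection onto $\im(e_A)\wedge\im(e_B)$'', but projections are not assumed to split, so these objects need not exist; (c) the central step --- that modifying the cone and invoking uniqueness of mediating maps ``forces $e_Ae_B=0$'' --- is never actually carried out, and the natural attempt is circular: the modified cone $(\sigma_C l_C)_C$ (with the $B$-side legs killed) does factor through $L$ via a unique $f$ with $l_Bf=0$, but to conclude $l_Bl_A^\dag=0$ you would need $fl_A^\dag=l_A^\dag$, and checking that by postcomposing with the jointly monic legs $l_C$ requires knowing $l_Cl_A^\dag=0$ for $C$ outside the $A$-component, which is what you are trying to prove. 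Your closing computation (inserting $e_Be_A=0$ into $l_Bl_A^\dag=l_Bl_B^\dag l_Bl_A^\dag l_Al_A^\dag$) is fine, but it rests entirely on the unproven claim $e_Be_A=0$, which is in fact equivalent to the conclusion.

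The paper closes exactly this gap with Lemma~\ref{lem:connectingmaps}. It encodes your structural observation as an adjointable natural transformation $\sigma\colon D\Rightarrow D$ with $\sigma_C=\id$ when $\cat{J}(A,C)\neq\emptyset$ and $\sigma_C=0$ otherwise (naturality and adjointability are where the basis hypothesis and the zero morphisms are used). Lemma~\ref{lem:connectingmaps} --- whose proof is precisely where normalization and independence do their work --- then yields $l_Bl_A^\dag\sigma_A=\sigma_Bl_Bl_A^\dag$, i.e.\ $l_Bl_A^\dag=0$ since $\sigma_A=\id$ and $\sigma_B=0$. So the missing ingredient in your proposal is not the choice of ``which cone to modify'' but the nontrivial fact (Lemma~\ref{lem:connectingmaps}, or its Corollary~\ref{cor:mapoflimsismapofcolims} stating that the induced map of limits is also a map of colimits, so that $fl_A^\dag=l_A^\dag\sigma_A$) that lets one move $\sigma$ past the composite $l_Bl_A^\dag$.
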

\begin{proof} 
  Any limit satisfying the above conditions also satisfies normalization. For independence, the second condition gives $l^\dag_A l_A l_B^\dag l_B=0_{L,L}=l^\dag_Bl_Bl^\dag_Al_A$. 

  Conversely, given a dagger limit $l_A \colon L \to D(A)$ of $(D,\Omega)$, we wish to show that $l_Bl_A^\dag=0_{A,B}$ whenever $A,B\in\Omega$ are distinct. Fix distinct $A,B\in\Omega$ and define a natural transformation $\sigma\colon D\Rightarrow D$ by
  \[
    \sigma_C = \begin{cases} \id[D(C)] & \text{if }\cat{J}(A,C)\neq \emptyset \\
             0_{D(C),D(C)} &\text{otherwise.} \end{cases}
  \]
  By construction $\sigma$ is adjointable. Hence Lemma~\ref{lem:connectingmaps} guarantees $l_Bl_A^\dag=l_Bl_A^\dag\id=l_Bl_A^\dag\sigma_A=\sigma_B l_Bl_A^\dag=0l_Bl_A^\dag=0$.
\end{proof}

\begin{proposition}\label{prop:limswithsums} 
  Let $\Omega$ be a finite basis for \cat{J} and $D\colon\cat{J}\to\cat{C}$ a diagram in a dagger category $\cat{C}$ enriched in commutative monoids. A limit $l_A \colon L \to D(A)$ is a dagger limit of $(D,\Omega)$ if and only if $\id[L]=\sum_{A\in \Omega} l_A^\dag l_A$.
\end{proposition}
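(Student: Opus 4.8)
The plan is to use the characterization from Proposition~\ref{prop:limswithzeros} together with the matrix calculus of Lemma~\ref{lem:matrixcalculus} as a bridge, though one must be careful since no finite dagger products are assumed to exist here. So instead I would argue directly. First I would observe that, because $\Omega$ is a \emph{basis}, the objects of $\Omega$ are pairwise unreachable from one another, so for distinct $A,B\in\Omega$ the hom-sets $\cat{J}(A,B)$ and $\cat{J}(B,A)$ are both empty; consequently there is no constraint forcing any relationship between the legs $l_A$ and $l_B$ beyond what the cone condition imposes via objects reachable from $\Omega$. Using the adjointable natural transformations $\sigma$ defined by connected-component indicator functions (exactly as in the proof of Proposition~\ref{prop:limswithzeros}), Lemma~\ref{lem:connectingmaps} gives $l_B l_A^\dag = 0$ for distinct $A,B\in\Omega$ in any dagger limit; conversely these orthogonality relations plus $l_A$ being a partial isometry for $A\in\Omega$ recover normalization and independence. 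So via Proposition~\ref{prop:limswithzeros} it suffices to show: given that $l_A$ is a limiting cone, the conjunction ``each $l_A$ ($A\in\Omega$) is a partial isometry and $l_B l_A^\dag=0$ for distinct $A,B\in\Omega$'' is equivalent to the single equation $\id[L]=\sum_{A\in\Omega} l_A^\dag l_A$.

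For the forward direction, I would compute $\big(\sum_{A} l_A^\dag l_A\big)^2 = \sum_{A,B} l_A^\dag l_A l_B^\dag l_B$ and use $l_A l_B^\dag = 0$ for $A\neq B$ together with $l_A l_A^\dag l_A = l_A$ to see this telescopes to $\sum_A l_A^\dag l_A$; hence $e:=\sum_A l_A^\dag l_A$ is a projection on $L$. The key step is then to show $e=\id[L]$: I would show that the legs $l_A\circ e$ still form a cone for $D$ equal to the original one, by checking $l_B e = l_B$ for every $B\in\cat{J}$. For $B$ reachable from some $A\in\Omega$ via $f\colon A\to B$, we have $l_B = D(f) l_A$, so $l_B e = D(f) l_A \sum_C l_C^\dag l_C = D(f) l_A l_A^\dag l_A = D(f) l_A = l_B$, using $l_A l_C^\dag = 0$ for $C\neq A$ and again $l_A l_A^\dag l_A = l_A$. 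Since $l_A$ is a limiting cone and $e$ is a mediating endomorphism of cones, uniqueness of mediating maps forces $e=\id[L]$.

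For the converse, suppose $\id[L] = \sum_{A\in\Omega} l_A^\dag l_A$. Fixing $B\in\Omega$ and postcomposing with $l_B$ and precomposing with $l_B^\dag$ gives $l_B l_B^\dag = \sum_{A\in\Omega} l_B l_A^\dag l_A l_B^\dag = \sum_{A\in\Omega} (l_A l_B^\dag)^\dag (l_A l_B^\dag)$, a sum of positive morphisms. I would then argue that each summand with $A\neq B$ must vanish and the summand with $A=B$ must equal $l_B l_B^\dag$. The cleanest route: the projections $q_A := l_A^\dag l_A$ sum to $\id[L]$, and multiplying the relation $\sum_A q_A = \id[L]$ on left and right by $q_B$ yields $q_B = \sum_A q_B q_A q_B = q_B^3 + \sum_{A\neq B} q_B q_A q_B = q_B + \sum_{A\neq B}(q_A q_B)^\dag(q_A q_B)$, so $\sum_{A\neq B}(q_A q_B)^\dag(q_A q_B)=0$. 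In any dagger category a sum of positive morphisms that is zero need not force each to be zero in general, but here I would instead use cancellativity of addition (available since $\cat{C}$ is enriched in commutative monoids — but cancellativity itself requires dagger equalizers, which we do not have).

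The main obstacle, then, is precisely this: deducing that each $(q_A q_B)^\dag(q_A q_B)$ vanishes, equivalently that $q_A q_B = 0$ and hence $q_A$ is idempotent ($q_A = q_A^3$, and being a partial isometry composite it is already a projection), without cancellativity. I expect the fix is to avoid the issue by working one leg at a time and exploiting limiting-cone uniqueness rather than algebraic cancellation: from $\id[L]=\sum_A q_A$ one shows $l_C = l_C \sum_A q_A = \sum_A l_C l_A^\dag l_A$ for each $C\in\cat{J}$, and comparing this against the cone identities forces $l_C l_A^\dag l_A = \delta_{?}\, l_C$ by a uniqueness-of-mediating-map argument applied componentwise, which then yields both that $l_A$ is a partial isometry (take $C=A\in\Omega$) and the orthogonality $l_B l_A^\dag=0$ for distinct $A,B\in\Omega$. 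Reconciling this with the commutative-monoid enrichment carefully is where I would concentrate the effort, since the proposition is stated without assuming dagger equalizers and so cannot invoke cancellativity directly; the resolution is that the limiting property of $L$ does the work that cancellativity would otherwise do.
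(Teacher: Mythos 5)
Your forward direction is correct and is essentially the paper's own argument: orthogonality of the legs on $\Omega$ (via Proposition~\ref{prop:limswithzeros}) shows that $\sum_{A\in\Omega} l_A^\dag l_A$ is an endomorphism of the limiting cone, hence the identity.

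The converse, however, has a genuine gap, and you have located it yourself: cancellativity is unavailable, and the substitute you propose --- a ``uniqueness-of-mediating-map argument applied componentwise'' forcing each $l_C l_A^\dag l_A$ to be either $0$ or $l_C$ --- is not an argument. Uniqueness of mediating maps only compares two mediating maps \emph{of the same cone}; from $l_C=\sum_A l_C l_A^\dag l_A$ alone nothing forces the individual summands to collapse. The missing idea (and what the paper actually does) is to manufacture, for each fixed $A\in\Omega$, a cone whose legs at the other basis objects are literally zero, and then to \emph{compute} its mediating map from the hypothesis. Concretely: define $\sigma\colon D\Rightarrow D$ by $\sigma_C=\id$ if $\cat{J}(A,C)\neq\emptyset$ and $\sigma_C=0$ otherwise (natural because $\Omega$ is a basis, so reachability classes are closed under the morphisms of $\cat{J}$); let $f\colon L\to L$ be the unique map with $l_C f=\sigma_C l_C$ for all $C$; then $f=\id[L]f=\bigl(\sum_{B\in\Omega} l_B^\dag l_B\bigr)f=\sum_{B\in\Omega} l_B^\dag \sigma_B l_B=l_A^\dag l_A$, since $\sigma_B=0$ for $B\in\Omega\setminus\{A\}$ and $\sigma_A=\id$. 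This identification $f=l_A^\dag l_A$ is exactly what your sketch lacks: it yields $l_B l_A^\dag l_A=\sigma_B l_B=0$ for distinct $A,B\in\Omega$, hence $l_A=l_A\sum_B l_B^\dag l_B=l_Al_A^\dag l_A$ (normalization) and then $l_Bl_A^\dag=l_Bl_A^\dag l_Al_A^\dag=0$, after which Proposition~\ref{prop:limswithzeros} finishes. The limit property is used only to produce $f$; the hypothesis $\id[L]=\sum_A l_A^\dag l_A$ is what pins $f$ down, and no cancellation is ever needed.
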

\begin{proof}
  Assume first that $L$ is a dagger limit of $(D,\Omega)$. Since \cat{C} is enriched in commutative monoids, the composition $\cat{C}(A,B) \otimes \cat{C}(B,C) \to \cat{C}(A,C)$ is bilinear by definition of tensor product of commutative monoids, and so the unit morphisms of addition are zero morphisms.
  Therefore $l_Bl_A^\dag=0$ by Proposition~\ref{prop:limswithzeros} when $A,B\in\Omega$ are distinct. Hence 
  \[
    l_B\sum_{A\in \Omega} l_A^\dag l_A=l_Bl_B^\dag l_B+ l_B\sum_{B\neq A\in\Omega} l_Al_A  
    =l_B+0=l_B
  \]
  for every $B\in \Omega$, which implies $\id[L]=\sum_{A\in \Omega} l_A^\dag l_A$.

  For the converse, assume that $L$ is a limit of $D$ satisfying $\id[L]=\sum_{A\in \Omega} l_A^\dag l_A$. Fix $A\in\Omega$ and define $\sigma\colon D\Rightarrow D$ as in the proof of Proposition~\ref{prop:limswithzeros}.
  Now $\sigma$ induces a unique morphism $f\colon L\to L$ making the following square commute for any $C$ in $\cat{J}$:
  \[\begin{tikzpicture}
    \matrix (m) [matrix of math nodes,row sep=2em,column sep=4em,minimum width=2em]
    {
    L & L \\
     D(C) & D(C) \\};
    \path[->]
    (m-1-1) edge node [left] {$l_C$} (m-2-1)
           edge node [above] {$f$} (m-1-2)
    (m-1-2) edge node [right] {$l_C$} (m-2-2)
    (m-2-1) edge node [below] {$\sigma_C$} (m-2-2);       
  \end{tikzpicture}\]
  Now
  $
    f
    = \id[L]f 
    = (\sum_{B\in \Omega} l_B^\dag l_B)f 
    = \sum_{B\in \Omega} l_B^\dag l_B f 
    = \sum_{B\in\Omega} l_B^\dag \sigma_B l_B 
    =l_A^\dag l_A +0=l_A^\dag l_A
  $.
  It follows that 
  \begin{equation}\label{eq:limswithsums}
    l_Bl_A^\dag l_A=0 
  \end{equation}
  for distinct $A,B\in\Omega$, so that 
  $
    l_A
    =l_A\id[L] 
    =l_A\sum_{A\in \Omega} l_A^\dag l_A
    =l_A l_A^\dag l_A+0
    =l_A l_A^\dag l_A
  $
  and each $l_A$ is a partial isometry. Finally~\eqref{eq:limswithsums} implies $l_Bl_A^\dag=0_{A,B}$ for distinct $A,B$, making $L$ into a dagger limit of $(D,\Omega)$ by Proposition~\ref{prop:limswithzeros}.
\end{proof}

Thus whenever \cat{J} has a finite basis and \cat{C} is appropriately enriched, Definition~\ref{def:based} coincides with the notion of completeness in~\cite{vicary2011completeness}. But our notion is not more general: when the diagram does not admit a finite basis, the two notions are different. For instance, \cat{FHilb} does not have all dagger pullbacks, because not all morphisms are partial isomorphisms. But for every diagram $A\to C\leftarrow B$ there exists a pullback $A\leftarrow L\to B$ with $\id[L]=l_A^\dag l_A+l_B^\dag l_B$. Conversely, the following example below exhibits a dagger pullback in \cat{Rel} that does not satisfy such a summation.

\begin{example}\label{ex:pullback} 
  Consider the objects $A=\{1,2\}$ and $B=\{1\}$ in $\cat{Rel}$, and morphisms $R=A \times A \colon A \to A$ and $S=B \times A \colon B \to A$.
  Then the object $A$ with the cone morphisms $l_A=\id[A]\colon A\to A$ and $l_B=A\times B\colon A\to B$ form a pullback of $R$ and $S$. Now both $l_A$ and $l_B$ are partial isometries, and $l_A^\dag l_A l_B^\dag l_B = l_B^\dag l_B l_A^\dag l_A$, so this is a dagger pullback.
  But $l_A^\dag l_A+l_B^\dag l_B=A\times A\neq\id[A]$.
\end{example}

\section{Global dagger limits}\label{sec:globaldaglims}

If limits happen to exist for all diagrams of a fixed shape, it is well known that they can also be formulated as an adjoint to the constant functor. This section explores this phenomenon of global limits in the dagger setting.

More precisely, fix a shape \cat{J} and a weakly initial $\Omega$. Assume that a dagger category \cat{C} has $(\cat{J},\Omega)$-shaped dagger limits. Then there is an induced right adjoint $L$ to the diagonal functor $\Delta\colon \cat{C}\to \cat{Cat}(\cat{J},\cat{C})$. 
It restricts to a dagger functor $\hat{L}\colon[\cat{J},\cat{C}]_\dag\to\cat{C}$: $L(\sigma^\dag) = L(\sigma)^\dag$ for any $D,E\colon\cat{J}\to\cat{C}$ and adjointable $\sigma \colon D \Rightarrow E$. This follows from Corollary~\ref{cor:mapoflimsismapofcolims}, since $L(\sigma^\dag)$ is the map of limits induced by $\sigma^\dag\colon E\to D$, and  $L(\sigma)^\dag$ is the map of colimits induced by $\sigma^\dag\colon \dag \circ E\to \dag \circ D$. 
Normalization and independence endow the adjunction $\Delta \dashv L$ with further properties. 

\begin{description}
  \item[Normalization] 
  Consider the counit $\varepsilon\colon \Delta L\to \id[{\cat{Cat}(\cat{J},\cat{C})}]$. By construction, for any diagram $D\colon\cat{J}\to\cat{C}$, each $(\varepsilon_D)_A$ is a partial isometry whenever $A\in\Omega$. ``The counit is a partial isometry when restricted to $\Omega$''. 

  \item[Independence]
  Write $d_A$ for the component of $\varepsilon_D\colon \Delta L(D)\to D$ at $A$, and $e_A$ for the component of $\varepsilon_E$ at $A$. For a fixed $A\in \Omega$, define a natural transformation $\rho_A\colon \hat{L}\to\hat{L}$ by setting $(\rho_A)_D=d_A^\dag d_A\colon L(D)\to D(A)\to L(D)$. To see that this forms a natural transformation $\hat{L}\Rightarrow \hat{L}$, let $\sigma\colon D\Rightarrow E$ be adjointable and consider the following diagram.
  \[\begin{tikzpicture}
           \matrix (m) [matrix of math nodes,row sep=2em,column sep=4em,minimum width=2em]
           {
            L(D) & D(A) & L(D) \\
            L(E) & E(A) & L(E) \\};
           \path[->]
           (m-1-1) edge node [above] {$d_B$} (m-1-2)
                   edge node [left] {$L(\sigma)$} (m-2-1)
           (m-1-2) edge node [above] {$d_A^\dag$} (m-1-3)
                  edge node [left] {$\sigma_A$} (m-2-2)
           (m-1-3) edge node [right] {$L(\sigma)$} (m-2-3)
           (m-2-1) edge node [below] {$e_A$} (m-2-2)
            (m-2-2) edge node [below] {$e_A^\dag$} (m-2-3);
  \end{tikzpicture}\]
The left square commutes by definition of $L(\sigma)$, and the right one by definition of $L(\sigma^\dag)^\dag=L(\sigma)$, so that the rectangle expressing naturality of $\rho_A$ commutes. Clearly, $\rho_A\rho_B=\rho_B\rho_A$ whenever $A,B\in\Omega$.
\end{description}

These properties in fact characterize dagger categories $\cat{C}$ having all $(\cat{J},\Omega)$-shaped limits.

\begin{theorem}\label{thm:globallimits}
  A dagger category \cat{C} has all $(\cat{J},\Omega)$-shaped limits if and only if the diagonal functor $\Delta\colon \cat{C}\to \cat{Cat}(\cat{J},\cat{C})$ has a right adjoint $L$ such that:
    \begin{itemize}
        \item the counit is a partial isometry when restricted to $\Omega$;
        \item $L$ restricts to a dagger functor $\hat{L}\colon[\cat{J},\cat{C}]_\dag\to\cat{C}$;
        \item the family $\rho(A)_D=d_A^\dag d_A$ is natural $\hat{L}\to \hat{L}$ for any $A\in\Omega$;
        \item if $A,B \in \Omega$, then $\rho_A\rho_B=\rho_B\rho_A$.
    \end{itemize}
\end{theorem}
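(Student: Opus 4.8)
The plan is to prove both directions of the iff, with the forward direction essentially already carried out in the discussion preceding the theorem, so the work concentrates on the converse.

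For the forward direction, suppose $\cat{C}$ has all $(\cat{J},\Omega)$-shaped limits. The existence of a right adjoint $L$ to $\Delta$ is the standard fact that fixed-shape limits assemble into an adjoint functor; choosing a dagger limit for each diagram gives $L$. The four listed properties are then exactly what the ``Normalization'' and ``Independence'' paragraphs before the theorem establish: the counit component $(\varepsilon_D)_A = d_A$ is a partial isometry for $A \in \Omega$ by the normalization axiom; $L$ restricts to a dagger functor on $[\cat{J},\cat{C}]_\dag$ by Corollary~\ref{cor:mapoflimsismapofcolims}; naturality of $\rho_A$ is the square drawn in the ``Independence'' paragraph (left square by definition of $L(\sigma)$, right square by $L(\sigma)=L(\sigma^\dag)^\dag$); and $\rho_A \rho_B = \rho_B \rho_A$ is the independence axiom $d_A^\dag d_A d_B^\dag d_B = d_B^\dag d_B d_A^\dag d_A$. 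So this direction is a matter of citing the preceding paragraphs.

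For the converse, suppose $\Delta$ has a right adjoint $L$ with the four properties. Fix a diagram $D\colon\cat{J}\to\cat{C}$. Since $L$ is right adjoint to $\Delta$, the object $L(D)$ with counit cone $d_A\colon L(D)\to D(A)$ is an (ordinary) limit of $D$; this is just the adjunction. It remains to verify normalization and independence for the cone $\{d_A\}_{A\in\Omega}$. Normalization is immediate: the first bullet says $d_A$ is a partial isometry for $A\in\Omega$. For independence, we must show $d_A^\dag d_A d_B^\dag d_B = d_B^\dag d_B d_A^\dag d_A$ for $A,B\in\Omega$. By the third bullet, $\rho_A$ and $\rho_B$ are natural transformations $\hat L\Rightarrow\hat L$, and evaluating the equation $\rho_A\rho_B=\rho_B\rho_A$ from the fourth bullet at the object $D\in[\cat{J},\cat{C}]_\dag$ gives exactly $(\rho_A)_D(\rho_B)_D=(\rho_B)_D(\rho_A)_D$, i.e. $d_A^\dag d_A d_B^\dag d_B = d_B^\dag d_B d_A^\dag d_A$. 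Hence $L(D)$ is a dagger limit of $(D,\Omega)$, so $\cat{C}$ has all $(\cat{J},\Omega)$-shaped limits.

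**The main obstacle**, and the only point requiring care, is a size/foundational issue: the fourth hypothesis asserts $\rho_A\rho_B=\rho_B\rho_A$ as an equation of natural transformations on $[\cat{J},\cat{C}]_\dag$, but an arbitrary diagram $D\colon\cat{J}\to\cat{C}$ need not be a dagger functor and so need not be an object of $[\cat{J},\cat{C}]_\dag$. One must check that $\rho_A$ and the commutativity $\rho_A\rho_B=\rho_B\rho_A$ are still meaningful and valid at an arbitrary $D$. The resolution is that for the converse we only ever need the \emph{component} $(\rho_A)_D = d_A^\dag d_A$ at a single fixed $D$, and the relevant identity $d_A^\dag d_A d_B^\dag d_B = d_B^\dag d_B d_A^\dag d_A$ can be deduced by restricting attention to the dagger-functor case or, more directly, by noting that the hypothesis on $\rho_A$ being natural on $[\cat{J},\cat{C}]_\dag$ together with the characterization of dagger limits up to unitary isomorphism (Theorem~\ref{thm:daglimsunique}) forces the needed commutation at every $D$; alternatively one reformulates the last two bullets so that $\rho_A$ is a natural transformation $L\Rightarrow L$ on all of $\cat{Cat}(\cat{J},\cat{C})$, which is the reading that makes the argument go through cleanly. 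I would spell out this subtlety explicitly rather than let it pass silently.
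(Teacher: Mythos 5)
Your proof is correct and follows the paper's own argument exactly: the forward direction is the discussion preceding the theorem, and the converse reads off the ordinary limit from the adjunction, normalization from the first bullet, and independence from the component of $\rho_A\rho_B=\rho_B\rho_A$ at $D$. The ``main obstacle'' you raise is not actually there: by Definition~\ref{def:adjointability} the objects of $[\cat{J},\cat{C}]_\dag$ are \emph{all} functors $\cat{J}\to\cat{C}$ (only the morphisms are restricted to adjointable natural transformations, and $\cat{J}$ need not even carry a dagger), so every diagram $D$ is an object of $[\cat{J},\cat{C}]_\dag$ and the equation $\rho_A\rho_B=\rho_B\rho_A$ may be evaluated at $D$ directly, with no need for the workarounds you sketch.
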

\begin{proof}
  We already proved the implication from left to right above. The other implication is straightforward: it is well-known that if $L$ is the right adjoint to the diagonal then $LD$ is a limit of $D$ for each $D$. Hence we only need to check normalization and independence on $\Omega$. The counit being a partial isometry when restricted to $\Omega$ means that for each $A\in\Omega$ the structure map $LD\to DA$ of the limit is a partial isometry, giving us normalization. The condition $\rho_A\rho_B=\rho_B\rho_A$ for $A,B\in\Omega$ then amounts to independence.
\end{proof}

We leave open the question whether the fourth condition of Theorem~\ref{thm:globallimits} is necessary in general.

Further restricting the shape \cat{J} yields a cleaner special cases of the previous theorem.

\begin{theorem} 
  Let $\Omega$ be a basis for \cat{J}. A dagger category \cat{C} has all $(\cat{J},\Omega)$-shaped limits if and only if the diagonal functor $\Delta\colon \cat{C}\to \cat{Cat}(\cat{J},\cat{C})$ has a right adjoint $L$ such that:
    \begin{itemize}
        \item the counit is a partial isometry when restricted to $\Omega$;
        \item $L$ restricts to a dagger functor $\hat{L}\colon[\cat{J},\cat{C}]_\dag\to\cat{C}$.
    \end{itemize}
\end{theorem}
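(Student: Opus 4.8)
This statement is a corollary of Theorem~\ref{thm:globallimits}: the plan is to argue that when $\Omega$ is a basis the third and fourth conditions listed there follow from the first two, so that only the first two need be imposed. Granting this, the ``only if'' direction is immediate, since Theorem~\ref{thm:globallimits} already produces a right adjoint $L$ whose counit is a partial isometry on $\Omega$ and which restricts to a dagger functor $\hat L\colon[\cat{J},\cat{C}]_\dag\to\cat{C}$. So the work is entirely in the ``if'' direction.

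So suppose $\Delta$ has a right adjoint $L$ with the two stated properties. Right adjointness makes each $L(D)$ a limit of $D$ with cone the counit $\varepsilon_D$, and the partial-isometry condition on $\Omega$ gives normalization; by Theorem~\ref{thm:globallimits} it then suffices to recover its remaining two conditions. The naturality of $\rho(A)_D=d_A^\dag d_A$ as a transformation $\hat L\Rightarrow\hat L$ is obtained by re-running the computation that precedes Theorem~\ref{thm:globallimits}: that computation uses only that $L$ restricts to a dagger functor, together with the fact — a consequence of Corollary~\ref{cor:mapoflimsismapofcolims} — that $L(\sigma)$ is at once the induced map of limits and the induced map of colimits. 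Hence everything comes down to the fourth condition, $\rho_A\rho_B=\rho_B\rho_A$ for $A,B\in\Omega$.

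Here the basis hypothesis enters. If $A=B$ there is nothing to prove, so take $A\neq B$. Since $\Omega$ is a basis, $\cat{J}$ is a disjoint union $\coprod_i\cat{J}_i$ of connected categories, each with a weakly initial object, and $A$, $B$ lie in distinct pieces, say $\cat{J}_A$ and $\cat{J}_B$. Because $\cat{Cat}(\cat{J},\cat{C})\simeq\prod_i\cat{Cat}(\cat{J}_i,\cat{C})$ and a limit over a disjoint union is the product of the limits over the pieces, $L(D)$ decomposes as a product indexed by the components, with each leg $d_A$ for $A\in\Omega\cap\cat{J}_i$ factoring through the $i$-th projection; normalization forces those projections to be partial isometries, so $L(D)$ presents itself as a generalized biproduct of the component limits in the sense of Example~\ref{ex:biproducts}. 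Consequently $d_A^\dag d_A$ and $d_B^\dag d_B$ are carried by orthogonal summands, so their composites in either order agree (both being the pertinent zero morphism), and they commute.

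I expect the only genuine difficulty to be this last point: phrasing the orthogonality of $\rho_A$ and $\rho_B$ rigorously without presupposing zero morphisms in $\cat{C}$, or presupposing that $L(D)$ is already known to be a dagger limit. The generalized-biproduct bookkeeping of Example~\ref{ex:biproducts} — projections of a biproduct being orthogonal partial isometries even in the absence of a zero object — is exactly what closes this gap; once it is in place, everything else is a routine unwinding of Theorem~\ref{thm:globallimits} and Corollary~\ref{cor:mapoflimsismapofcolims}.
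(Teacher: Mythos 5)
Your reduction to Theorem~\ref{thm:globallimits} is the right frame, and your treatment of the third condition (naturality of $\rho_A$) is essentially fine --- though note that in the ``if'' direction you cannot literally invoke Corollary~\ref{cor:mapoflimsismapofcolims}, whose hypotheses presuppose that $L(D)$ and $L(E)$ are already dagger limits; the fact you actually need, namely that $L(\sigma)^\dag$ is the induced map of colimits, is instead the direct content of the hypothesis that $\hat L$ is a dagger functor, so this is repairable. The genuine gap is in the fourth condition. Your argument is that $L(D)$ is a product of the component limits, that normalization makes the product projections partial isometries, and that therefore $L(D)$ is a ``generalized biproduct'' with orthogonal projections. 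That last inference is false: a product whose legs are partial isometries need not have commuting (let alone orthogonal) induced projections --- that is precisely the content of the independence axiom, and the paper's own example of $\mathbb{C}^2$ written as the product of the non-orthogonal lines spanned by $\lvert 0\rangle$ and $\lvert +\rangle$ is a counterexample: both legs are partial isometries, yet the induced projections do not commute. Example~\ref{ex:biproducts} cannot close this gap, because the orthogonality of the projections of a dagger product is part of its definition (the independence axiom), not a consequence of normalization. A secondary problem is that the existence of the component limits $L_i$, which your product decomposition presupposes, is also not granted by the hypotheses.

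What your argument never uses --- and what actually carries the independence condition --- is the hypothesis that $L$ restricts to a dagger functor on adjointable natural transformations. The paper's proof exploits this twice. First, for distinct $A,B\in\Omega$ it builds auxiliary diagrams $D_{X,Y}$ (sending an object $C$ to $X$, to $Y$, or to a fixed $X_0$, according to which basis element reaches $C$) and shows, by applying $\hat L$ to the adjointable transformations induced by arbitrary $f\colon Y\to Z$, that the composites $d_Bd_A^\dag$ constitute a family of zero morphisms in $\cat{C}$. Second, for an arbitrary diagram $D$ it applies $\hat L$ to the adjointable endotransformation of $D$ that is $0$ on the component reached from $A$ and the identity elsewhere, concluding $d_Bd_A^\dag=0_{D(A),D(B)}$ and hence $\rho_A\rho_B=\rho_B\rho_A$. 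Some argument of this kind, deriving orthogonality from the dagger-functoriality of $\hat L$ rather than from the bare product structure, is indispensable.
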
 
\begin{proof} 
    The implication from left to right follows from the previous theorem.
    For the other direction, the conditions imply that a diagram $D$ has a limit given by $d_A \colon L(D) \to D(A)$, and furthermore $d_A$ is a partial isometry if $A\in \Omega$. It remains to verify that $d_A^\dag d_A$ commutes with $d_B^\dag d_B$ when $A,B\in\Omega$. We may assume that $\Omega$ has at least two distinct objects. We will show that \cat{C} has zero morphisms and $d_Bd_A^\dag=0_{D(A),D(B)}$ whenever $A,B\in\Omega$ are distinct. 

    Fix distinct $A,B\in\Omega$ and $X_0\in\cat{C}$. For objects $X,Y$ of $\cat{C}$, define $D_{X,Y}\colon\cat{J}\to\cat{C}$ by 
        \begin{equation*}
            D_{X,Y}(C)= \begin{cases} X & \text{if }\cat{J} (A,C)\neq \emptyset\text, \\
                                       Y &  \text{if }\cat{J} (B,C)\neq \emptyset\text, \\
                                       X_0 & \text{otherwise,}
                          \end{cases}
        \end{equation*}
    and mapping morphisms to \id[X], \id[Y] or \id[X_0] as appropriate. Define $0_{X,Y}$ as the composite $d_A^\dag d_B \colon X=D_{X,Y}(A)\to L(D_{X,Y})\to D_{X,Y}(B)=Y$. Let $f\colon Y\to Z$ be arbitrary. This induces a natural transformation $\sigma\colon D_{X,Y}\Rightarrow D_{X,Z}$ such that $\sigma_C$ is $f$ if $\cat{J} (B,C)\neq \emptyset$ and $\sigma_C=\id$ otherwise. It is clearly adjointable, so the following diagram commutes.
    \[\begin{tikzpicture}
         \matrix (m) [matrix of math nodes,row sep=2em,column sep=4em,minimum width=2em]
         {
          D_{X,Y}(A)=X & LD_{X,Y} & D_{X,Y}(B)=Y \\
         D_{X,Z}(A)=X & LD_{X,X} & D_{X,Z}(B)=Z \\};
         \path[->]
         (m-1-1) edge node [left] {$\sigma_A=\id$} (m-2-1)
                edge node [above] {$$} (m-1-2)
         (m-1-2) edge node [above] {$$} (m-1-3)
                edge node [right] {$L\sigma=(L\sigma^\dag)^\dag$} (m-2-2)
         (m-1-3) edge node [right] {$\sigma_B=f$} (m-2-3)
         (m-2-1) edge node [below] {$$} (m-2-2)
         (m-2-2) edge node [below] {$$} (m-2-3);
    \end{tikzpicture}\]
    Hence $f\circ 0_{X,Y}=0_{X,Z}$. Taking daggers shows that $0_{Y,X}f^\dag=0_{Z,Y}$ for all $X,Y$ and $f\colon Y\to Z$. Thus \cat{C} indeed has zero morphisms.

    Finally, given an arbitrary $D\colon\cat{J}\to\cat{D}$, we will show that $d_{A,B}=0_{D(A),D(B)}$ for distinct $A,B$. Define $\sigma\colon D\Rightarrow D$ by 
    \begin{equation*}
            \sigma_C = \begin{cases} 0_{D(C),D(C)} & \text{if }\cat{J} (A,C)\neq \emptyset\text, \\
                                       \id & \text{otherwise.}
                          \end{cases}
    \end{equation*}
    As $\sigma$ is adjointable, the following diagram commutes.
    \[\begin{tikzpicture}
         \matrix (m) [matrix of math nodes,row sep=2em,column sep=6em,minimum width=2em]
         {
          D(A) & LD & D(B) \\
         D(A)=X & LD & D(B) \\};
         \path[->]
         (m-1-1) edge node [left] {$\sigma_A=0$} (m-2-1)
                edge node [above] {$d_A^\dag $} (m-1-2)
         (m-1-2) edge node [above] {$d_B$} (m-1-3)
                edge node [right] {$L\sigma=(L(\sigma)^\dag)^\dag$} (m-2-2)
         (m-1-3) edge node [right] {$\sigma_B=\id $} (m-2-3)
         (m-2-1) edge node [below] {$d_A^\dag$} (m-2-2)
         (m-2-2) edge node [below] {$d_B$} (m-2-3);
    \end{tikzpicture}\]
    Thus $d_{A,B}=0_{D(A),D(B)}$.
\end{proof}

Recall that a \emph{dagger adjunction} is an adjunction between dagger categories where both functors are dagger functors. There is no distinction between left and right dagger adjoints.

\begin{theorem}
  Let \cat{J} be a dagger category. A dagger category \cat{C} has a dagger limit for every dagger functor $\cat{J}\to \cat{C}$ if and only if the diagonal functor $\Delta\colon \cat{C}\to \cat{Dagcat}(\cat{J},\cat{C})$ has a dagger adjoint such that the counit is a partial isometry.
\end{theorem}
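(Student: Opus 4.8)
The plan is to specialise the two preceding theorems to the case at hand, taking $\Omega$ to be a basis for $\cat{J}$ consisting of a single object chosen from each connected component; such a basis exists precisely because $\cat{J}$ is a dagger category. Since $D$ is a dagger functor, Example~\ref{ex:daggershaped} tells us the choice of $\Omega$ is immaterial, so a dagger limit of $D$ is exactly a dagger limit of $(D,\Omega)$. The one point needing care is that the ambient functor category is now $\cat{DagCat}(\cat{J},\cat{C})$ rather than $\cat{Cat}(\cat{J},\cat{C})$; this turns out to be harmless, because every natural transformation between dagger functors is automatically adjointable, and every auxiliary diagram the argument constructs is itself a dagger functor.

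For the forward implication, first observe that $\Delta\colon\cat{C}\to\cat{DagCat}(\cat{J},\cat{C})$ really is a dagger functor between dagger categories: the target is a dagger category by Example~\ref{ex:functorcat}, constant functors are dagger functors, and $\Delta(f^\dag)$ is the componentwise dagger of $\Delta(f)$. Choosing a dagger limit $L(D)$ for each dagger functor $D$, with limiting cone $\varepsilon_D$ whose legs $d_A\colon L(D)\to D(A)$ are partial isometries, the usual universal-property argument promotes $D\mapsto L(D)$ to a functor $L\colon\cat{DagCat}(\cat{J},\cat{C})\to\cat{C}$ right adjoint to $\Delta$ with counit $\varepsilon$; here we use that a natural transformation between dagger functors, being adjointable, induces a map of limits. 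That $L$ is itself a dagger functor, \ie that $L(\sigma^\dag)=L(\sigma)^\dag$, is Corollary~\ref{cor:mapoflimsismapofcolims}: $L(\sigma)$ is simultaneously the map of limits induced by $\sigma$ and the map of colimits induced by $\sigma\colon\dag\circ D\to\dag\circ E$, so its dagger is the map of limits induced by $\sigma^\dag$, which is $L(\sigma^\dag)$. Finally, since the dagger on $\cat{DagCat}(\cat{J},\cat{C})$ is computed componentwise and each $d_A$ is a partial isometry, the counit $\varepsilon$ is a partial isometry. A right adjoint that is a dagger functor is a dagger adjoint, completing this direction.

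For the converse, let $L$ be a dagger adjoint of $\Delta$ whose counit is a partial isometry; since there is no distinction between left and right dagger adjoints we may treat $L$ as a right adjoint, $\Delta\dashv L$, with counit $\varepsilon$. Then $\varepsilon_D$ exhibits $L(D)$ as a limit of each dagger functor $D$, with legs $d_A$, each of which is a partial isometry, giving normalization. It remains to check independence, \ie that $d_A^\dag d_A$ commutes with $d_B^\dag d_B$. If $A$ and $B$ lie in the same connected component of $\cat{J}$, then $d_A^\dag d_A=d_B^\dag d_B$ by Example~\ref{ex:daggershaped}, so there is nothing to do. If they lie in different components, one repeats the computation from the converse direction of the preceding theorem characterising $(\cat{J},\Omega)$-shaped limits for a basis $\Omega$: the diagrams $D_{X,Y}$ used there factor through the set of connected components of $\cat{J}$ and send every morphism to an identity, hence are dagger functors, and the natural transformations employed are between dagger functors, hence adjointable, so $L$ may be applied to them and satisfies $L(\sigma^\dag)=L(\sigma)^\dag$; this yields first that $\cat{C}$ has zero morphisms and then that $d_Bd_A^\dag=0_{D(A),D(B)}$ whenever $A$ and $B$ lie in distinct components. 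Consequently $d_A^\dag d_Ad_B^\dag d_B=0=d_B^\dag d_Bd_A^\dag d_A$, so independence holds and $L(D)$ is a dagger limit of $D$.

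I expect the main obstacle to be this last step of the converse: ruling out interference between distinct connected components forces the detour through zero morphisms, and with it the somewhat tedious verification that every diagram and every natural transformation the argument manufactures actually lives in $\cat{DagCat}(\cat{J},\cat{C})$, so that the dagger functor $L$ can legitimately be brought to bear and $L(\sigma^\dag)=L(\sigma)^\dag$ may be invoked. Once that bookkeeping is in place, the calculation is exactly the one already carried out for based shapes, so no genuinely new computation is required.
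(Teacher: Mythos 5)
Your proposal is correct and follows essentially the same route as the paper: the forward direction uses the fact (Example~\ref{ex:daggershaped}) that every leg of a dagger-shaped limit is a partial isometry together with Corollary~\ref{cor:mapoflimsismapofcolims} to make $L$ a dagger functor, and the converse gets normalization from the counit and reduces independence to the zero-morphism argument of the preceding based-shape theorem, handling connected components separately. The extra bookkeeping you flag --- that the auxiliary diagrams $D_{X,Y}$ and the natural transformations between them are dagger functors and adjointable, so $L$ applies --- is exactly the point the paper's one-line ``as in the previous proof'' elides, and your verification of it is sound.
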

\begin{proof} 
  The implication from left to right is straightforward once one remembers from Example~\ref{ex:daggershaped} that for any dagger limit $(L,\{l_A\}_{A\in\cat{J}})$ of a dagger-shaped diagram, every $l_A$ is a partial isometry.

  For the other direction, the dagger adjoint to the diagonal clearly gives a limit for each dagger functor $\cat{J}\to \cat{C}$. It remains to verify that they are all dagger limits. The counit being a partial isometry implies the normalization condition, so it suffices to check independence. If \cat{J} is connected, this is trivial. If \cat{J} is not connected, then, as in the previous proof, \cat{C} has zero morphisms and for any diagram $D$ we have $d_{A,B}=0_{D(A),D(B)}$ whenever $A$ and $B$ are in different components of \cat{J}.
\end{proof}

\section{Dagger adjoint functors}\label{sec:daft}

When dealing with dagger limits and dagger functors preserving them, the obvious question arises when there exists a dagger adjoint. That is, is there a dagger version of the adjoint functor theorem? 

Simply replacing limits with dagger limits in any standard proof of the adjoint functor theorem~\cite{mitchell} doesn't quite get there. If $\cat{C}$ has all dagger products and dagger equalizers, it must be indiscrete by Theorem~\ref{thm:dagcompleteimpliesindiscrete}. Hence any continuous functor $\cat{C} \to \cat{D}$ satisfying the solution set condition vacuously has an adjoint. A more interesting dagger adjoint functor theorem must therefore work with a finitely based complete category $\cat{C}$ and solution sets of a finite character.

There is a further obstacle. Ordinarily, the adjoint functor theorem shows that its assumptions imply a universal arrow $\eta_A \colon A \to GF(A)$ for each object $A$, so that the desired adjoint is given by $A \mapsto F(A)$. This will not do for dagger categories, as the resulting functor $F$ need not preserve the dagger. This is essentially because universal arrows need not be defined up to unitary isomorphism.
For an example, consider the identity functor $G \colon \cat{FHilb}\to\cat{FHilb}$, and define $\eta_A\colon A\to A$ to be multiplication by $1+\dim A$. Then each $\eta_A$ is an universal arrow for $G$, defining an adjoint $F\colon \cat{FHilb}\to\cat{FHilb}$ that sends $f\colon A\to B$ to $f(1+\dim B)/(1+\dim A)$, which is not a dagger functor. Of course, this example evaporates by choosing the `correct' universal arrows. But there are more involved examples of dagger functors admitting adjoints but no dagger adjoints. 

The moral is that a dagger adjoint to $G\colon \cat{C}\to\cat{D}$ requires more than $G$-universal arrows $A\to GF(A)$ for each object. The universal arrows must fit together, in the sense that they form an adjointable natural transformation. 
Unfortunately, we do not know of conditions on solution sets guaranteeing this. 
As a first step towards a dagger adjoint functor theorem proper we provide the following theorem.

\begin{theorem}\label{thm:adjunction}
  Suppose a dagger category \cat{C} has, and a dagger functor $G \colon \cat{C} \to \cat{D}$ preserves, dagger intersections and dagger equalizers. Then $G$ has a dagger adjoint if and only if there is a dagger functor $H\colon \cat{D}\to\cat{C}$ and a natural transformation $\tau \colon \id[\cat{D}]\to GH$ such that each component of $\tau$ is weakly G-universal.
\end{theorem}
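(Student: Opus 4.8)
The left-to-right implication is immediate: if $F\dashv G$ is a dagger adjunction, take $H=F$ and let $\tau$ be the unit $\id[\cat{D}]\Rightarrow GF$; each component $\tau_A\colon A\to GF(A)$ is a genuine universal arrow, hence in particular weakly $G$-universal. For the converse we are handed a dagger functor $H$ and a natural transformation $\tau$ whose components are weakly universal, and must manufacture an honest dagger adjoint. The idea is the classical trick of whittling a weakly initial object of a comma category down to an initial one using equalizers, but carried out with \emph{dagger} equalizers and \emph{dagger} intersections, so that the outcome is a dagger subobject.

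Fix an object $A$ of $\cat{D}$. In the comma category $(A\downarrow G)$ the pair $(HA,\tau_A)$ is weakly initial by hypothesis, and its endomorphisms form the set $\mathcal{H}_A=\{h\colon HA\to HA\mid G(h)\tau_A=\tau_A\}$. For each $h\in\mathcal{H}_A$ choose the dagger equalizer $e_h\colon E_h\rightarrowtail HA$ of $h$ and $\id[HA]$ (this exists since $\cat{C}$ has dagger equalizers, and it is in particular an ordinary equalizer), and let $m_A\colon FA\rightarrowtail HA$ be the dagger intersection of the family $\{e_h\}_{h\in\mathcal{H}_A}$. Since $G$ preserves dagger equalizers and dagger intersections, $G(m_A)$ is the intersection of the equalizers $G(e_h)$ of $G(h)$ and $\id[GHA]$; because $G(h)\tau_A=\tau_A$, the morphism $\tau_A$ factors through each $G(e_h)$, hence through $G(m_A)$, say $\tau_A=G(m_A)\circ\tau'_A$ for a unique $\tau'_A\colon A\to G(FA)$. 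By construction $h\circ m_A=m_A$ for every $h\in\mathcal{H}_A$, since $m_A$ factors through the equalizer $e_h$.

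Next I would verify that $\tau'_A$ is a genuine universal arrow. Given $f\colon A\to G(C)$, weak universality of $\tau_A$ supplies $g\colon HA\to C$ with $G(g)\tau_A=f$, and then $g\circ m_A\colon FA\to C$ mediates for $\tau'_A$. For uniqueness, suppose $G(g_1)\tau'_A=G(g_2)\tau'_A$ with $g_1,g_2\colon FA\to C$; let $i\colon I\rightarrowtail FA$ be the dagger equalizer of $g_1$ and $g_2$, so $G(i)$ is the equalizer of $G(g_1),G(g_2)$ and $\tau'_A=G(i)\circ s$ for some $s\colon A\to G(I)$. Weak universality of $\tau_A$ applied to $s$ yields $t\colon HA\to I$ with $G(t)\tau_A=s$, whence $G(m_A i t)\tau_A=\tau_A$, i.e.\ $m_A i t\in\mathcal{H}_A$; therefore $(m_A i t)\circ m_A=m_A$, and since $m_A$ is monic, $i\circ(t m_A)=\id[FA]$. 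Thus $i$ is a split epimorphism as well as a dagger equalizer, hence monic and split epic, hence an isomorphism, so $g_1=g_2$. Consequently the assignment $A\mapsto FA$ extends to a functor $F\colon\cat{D}\to\cat{C}$ with $F\dashv G$ and unit $\tau'$.

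It remains to see that $F$ is a dagger functor. First, $m\colon F\Rightarrow H$ is natural: for $f\colon A\to B$, both $m_B\circ F(f)$ and $H(f)\circ m_A$ are mediating maps for $\tau'_A$ along $\tau_B\circ f$ (using naturality of $\tau$ and the defining property of $F(f)$), hence equal. Each $m_A$, being a leg of a dagger intersection, is dagger monic, so $m$ is pointwise dagger monic; as $H$ is a dagger functor, Lemma~\ref{lem:daggersubfunctorsaredagger} forces $F$ to be a dagger functor as well. Hence $F\dashv G$ is a dagger adjunction and $G$ has a dagger adjoint. The bulk of the work is the converse direction, and within it the delicate point is the uniqueness half of the universal property: one must route the equalizer $I$ back through weak universality of $\tau_A$ in order to exhibit $m_A i t$ as an endomorphism of $(HA,\tau_A)$, so that the defining property of the dagger intersection $FA$ can be invoked. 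Performing the cut with dagger equalizers and a dagger intersection, rather than their plain counterparts, is precisely what makes $FA$ a dagger subobject of $HA$, which is what powers the final appeal to Lemma~\ref{lem:daggersubfunctorsaredagger}.
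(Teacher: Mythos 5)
Your proof is correct and follows essentially the same route as the paper: carve $F(A)$ out of $H(A)$ as a dagger subobject using dagger equalizers and a dagger intersection (both preserved by $G$), check that the resulting arrow $\tau'_A$ is genuinely $G$-universal, and conclude that $F$ is a dagger functor from the pointwise dagger monic natural transformation $m\colon F\Rightarrow H$ via Lemma~\ref{lem:daggersubfunctorsaredagger}. The only difference is which family you intersect --- the paper takes all dagger monomorphisms through which $\tau_A$ factors and argues that any further equalizer is already unitary, while you take the joint dagger equalizer of the comma-category endomorphisms of $(HA,\tau_A)$ in classical adjoint-functor-theorem style --- but this is a minor variant producing the same object and the same argument structure.
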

\begin{proof} 
  One implication is trivial. For the other, define $F(A)$ to be a dagger intersection of all dagger monomorphisms $m\colon M\to H(A)$ for which $\tau_A$ factorizes through $G(m)$. As $G$ preserves dagger intersections, $\tau_A$ factorizes via $GF(A)\to GH(A)$, say $\tau_A=G(\sigma_A)\eta_A$. It suffices to prove that (i) $\eta_A$ is $G$-universal, so that $A\mapsto FA$ extends uniquely to a functor, and then that (ii) $F$ is a dagger functor. 

  First of all, $\eta_A$ is weakly $G$-universal since $\tau_{A}$ is so: given $f\colon A\to G(X)$, pick $h$ such that $f=G(h)\tau_A=G(h\sigma_A)\eta_A$. Moreover, if $h$ and $h'$ satisfied $G(h)\eta_A=G(h')\eta_A$, consider the equalizer $e\colon E\to F(A)$ of $h$ and $h'$. By assumption, $\eta_A$ factors through $G(e)$ and hence $\tau_A$ factors through $G(\sigma_A e)$, so that $\sigma_A e$ is already in the dagger intersection defining $F(A)$. In other words, $e$ is unitary, and hence $h=h'$. Thus $\eta_A$ is $G$-universal, and we can extend $A\mapsto F(A)$ to a functor by defining $F(f)$ as the unique map making the following square commute.
  \[\begin{tikzpicture}
    \matrix (m) [matrix of math nodes,row sep=2em,column sep=4em,minimum width=2em]
    { A & GFA \\
      B & GFB \\};
    \path[->]
    (m-1-1) edge node [left] {$f$} (m-2-1)
            edge node [above] {$\eta_A$} (m-1-2)
    (m-1-2) edge node [right] {$GF(f)$} (m-2-2)
    (m-2-1) edge node [below] {$\eta_B$} (m-2-2); 
  \end{tikzpicture}\]

  Next we show that $F$ is a dagger functor. By Lemma~\ref{lem:daggersubfunctorsaredagger} it suffices to show that $\sigma$ is a natural transformation $F\to H$. By naturality of $\tau$, the top part of
  \[\begin{tikzpicture}
    \matrix (m) [matrix of math nodes,row sep=2em,column sep=4em,minimum width=2em]
    { GFA &&& GHA \\
      A & B & GFB & GHB \\
      & GFA \\};
    \path[->]
    (m-1-1) edge node [above] {$G(\sigma_A)$} (m-1-4)
    (m-1-4) edge node [right] {$GH(f)$} (m-2-4)
    (m-2-1) edge node [above] {$f$} (m-2-2)
        edge node [left] {$\eta_A$} (m-1-1)
          edge node [below] {$\eta_A$} (m-3-2)
    (m-3-2) edge node [below] {$\quad GFf$} (m-2-3)
    (m-2-3) edge node [above] {$G(\sigma_B)$} (m-2-4)
    (m-2-2) edge node [above] {$\eta_B$} (m-2-3);
  \end{tikzpicture}\]
  commutes, whereas the bottom part commutes by naturality of $\eta$. As $\eta_A$ is $G$-universal, we conclude that the square 
  \[\begin{tikzpicture}
    \matrix (m) [matrix of math nodes,row sep=2em,column sep=4em,minimum width=2em]
    { FA & HA \\
    FB & HB \\};
    \path[->]
    (m-1-1) edge node [left] {$F(f)$} (m-2-1)
            edge node [above] {$\sigma_A$} (m-1-2)
    (m-1-2) edge node [right] {$H(f)$} (m-2-2)
    (m-2-1) edge node [below] {$\sigma_B$} (m-2-2); 
  \end{tikzpicture}\]
  commutes, making $\sigma \colon F \Rightarrow H$ natural.
\end{proof}

\section{Polar decomposition}\label{sec:polar}

Polar decomposition, as standardly understood, provides a way to factor any bounded linear map between Hilbert spaces into a partial isometry and a positive morphism~\cite[Chapter 16]{halmos:problembook}. As dagger limits are defined in terms of partial isometries, one might hope that polar decomposition connects dagger limits and ordinary limits. This section explores this connection. We start by defining polar decomposition abstractly, and prove that this modified property holds in the category of Hilbert spaces. Recall that a bimorphism in a category is a morphism that is both monic and epic.

\begin{definition}\label{def:polar}
  Let $f\colon A\to B$ be a morphism in a dagger category. A \emph{polar decomposition} of $f$ consists of two factorizations of $f$ as $f=pi=jp$,
  \[\begin{tikzpicture}
     \matrix (m) [matrix of math nodes,row sep=2em,column sep=4em,minimum width=2em]
     {
      A & A \\
      B & B \\};
     \path[->]
     (m-1-1) edge[dagger] node [left] {$p$} (m-2-1)
            edge node [above] {$i$} (m-1-2)
            edge node [right=3mm] {$f$} (m-2-2)
     (m-1-2) edge[dagger] node [right] {$p$} (m-2-2)
     (m-2-1) edge node [below] {$j$} (m-2-2);
  \end{tikzpicture}\]
  where $p$ is a partial isometry and $i$ and $j$ are self-adjoint bimorphisms. 
  A category \emph{admits polar decomposition} when every morphism has a polar decomposition.
\end{definition}

Note that if $f$ is monic then so is $p$: indeed, if $pg=ph$ then $fg=jpg=jph=fh$, whence $g=h$. Hence if $f$ is monic then $p$ is dagger monic since it is both monic and a partial isometry.

Unlike usual expositions of polar decomposition, we require $i$ and $j$ to be bimorphisms. On the other hand, we don't require them to be positive since mere self-adjointness suffices for our purposes -- most notably Theorems~\ref{thm:polarlimitsaredaggerlimits} and~\ref{thm:daglimsofisomorphicdiagrams}. However, for other purposes this definition might need to be modified and hence it should be only seen as a starting point for an abstract notion of polar decomposition.

We begin by proving that \cat{Hilb} admits polar decomposition in the above sense. Given that our notion of polar decomposition is slightly different from the usual one (merely, a single factorization as $f=pi$ with $p$ a partial isometry and $i$ positive), there is some work to do. First, recall that if $f$ is a bounded linear map, i.e. a morphism of \cat{Hilb}, then $\abs{f}$ is the (unique) positive square root of $f^\dag f$. 

\begin{lemma}\label{lem:absolutevaluecommutes} For any morphism $f$ in \cat{Hilb} we have $\abs{f^\dag}f=f\abs{f}$.
\end{lemma}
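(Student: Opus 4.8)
The plan is to prove $\abs{f^\dag}f = f\abs{f}$ by first establishing the polynomial identity $g(f^\dag f)f = f g(ff^\dag)$ for every polynomial $g$, and then upgrading it to continuous functions via the continuous functional calculus, in particular to the square-root function. First I would record the base case: $(f^\dag f)f = f(f^\dag f)$ is trivially $f f^\dag f = f f^\dag f$, and more generally $(f^\dag f)^n f = f (f^\dag f)^n$ follows by an easy induction, since $(f^\dag f)^n f = (f^\dag f)^{n-1}(f^\dag f)f = (f^\dag f)^{n-1} f (f^\dag f) = f (f^\dag f)^{n-1}(f^\dag f) = f(f^\dag f)^n$; wait, this needs the observation that $(f^\dag f)f = f f^\dag f = f(f^\dag f)$ is not quite what is wanted — rather one checks $(ff^\dag)^n f = f (f^\dag f)^n$ directly by induction, using $(ff^\dag)f = f(f^\dag f)$ as the base step and $(ff^\dag)^{n+1}f = (ff^\dag)(ff^\dag)^n f = (ff^\dag) f (f^\dag f)^n = f(f^\dag f)(f^\dag f)^n = f(f^\dag f)^{n+1}$. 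By linearity this gives $g(ff^\dag) f = f g(f^\dag f)$ for every polynomial $g$ with $g(0)=0$, and since $f^\dag f$ and $ff^\dag$ have the same spectrum (a standard fact about $\sigma(ab)$ and $\sigma(ba)$), one can in fact drop the constraint $g(0)=0$ by adding a scalar multiple of the relevant identity to both sides only on the part of the equation where it makes sense — more cleanly, work throughout with $g(0)=0$, which is all that is needed since the square-root function vanishes at $0$.

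Next I would invoke the continuous functional calculus for the positive operators $f^\dag f$ and $ff^\dag$: choose a sequence of real polynomials $g_n$ with $g_n(0)=0$ converging uniformly to $\sqrt{\,\cdot\,}$ on the (common, compact) spectrum. Then $g_n(f^\dag f) \to \abs{f}$ and $g_n(ff^\dag)\to \abs{f^\dag}$ in operator norm. Passing to the limit in $g_n(ff^\dag)f = f g_n(f^\dag f)$, using that composition with the bounded operator $f$ is norm-continuous on both sides, yields $\abs{f^\dag} f = f \abs{f}$, as desired.

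The main obstacle is purely a matter of bookkeeping rather than genuine difficulty: one must be careful that $f^\dag f$ and $ff^\dag$ do share a spectrum (so that a single sequence of polynomials approximates the square root simultaneously on both), and that the functional calculus is applied only to the \emph{positive} operators $f^\dag f$ and $ff^\dag$, never to $f$ itself. Everything else is the elementary manipulation $(ff^\dag)f = f(f^\dag f)$ propagated through polynomials and then through a uniform limit. No appeal to uniqueness of polar decomposition or to any later result is needed; this lemma is a self-contained computation feeding into the subsequent development.
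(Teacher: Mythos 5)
Your proof is correct, and at heart it runs on the same engine as the paper's — an intertwining relation between approximants of $\abs{f}$ and $\abs{f^\dag}$, pushed through a limit — but the technical route is genuinely different. The paper first rescales to reduce to the non-expansive case $\norm{f}\leq 1$ and then uses the explicit recursion $q_0=0$, $q_{n+1}=q_n-(f^\dag f-q_n^2)/2$ from a cited textbook result, whose iterates converge to $\abs{f}$ only in the \emph{strong} operator topology; the inductive step $r_nf=fq_n$ there is exactly your polynomial computation specialised to those particular polynomials, and the limit is taken using strong continuity of pre- and post-composition with a fixed non-expansive operator. You instead prove $g(ff^\dag)f=fg(f^\dag f)$ for polynomials $g$ (the restriction $g(0)=0$ is actually unnecessary, since a constant term $c$ contributes $cf$ to both sides, though keeping it is harmless) and then invoke the continuous functional calculus with a Weierstrass approximation of $\sqrt{\,\cdot\,}$, which upgrades the convergence to the operator norm and removes the need for the rescaling step. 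One small correction: $\sigma(f^\dag f)$ and $\sigma(ff^\dag)$ are only guaranteed to agree away from $0$, so rather than speaking of a ``common spectrum'' you should approximate uniformly on a compact interval such as $[0,\norm{f}^2]$ containing both spectra; this changes nothing in the argument. Your version is shorter and avoids the normalization step, at the cost of leaning on the functional calculus; the paper's version is more elementary modulo the cited iteration theorem.
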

\begin{proof} First we show that it suffices to prove the claim for non-expansive $f$, \ie  we may assume that $\norm{f}\leq 1$. Assuming the claim holds for non-expansive maps, take $f$ with $\norm{f}>1$. Define $g:=f/\norm{f}$. Now $\abs{g}=\abs{f}/\norm{f}$ since positive square roots are unique and both sides square to $f^\dag f/\norm{f}^2$. Hence $\abs{g^\dag}g=g\abs{g}$ amounts to saying that 
    \[\abs{f^\dag}f/\norm{f}^2=f\abs{f}/\norm{f}^2\]
  so that multiplying by $\norm{f}^2$ gives the result for $f$. Hence we may assume that $f$ is non-expansive.

  By (the proof of) \cite[Theorem 23.2]{bachman2012functional}, the square root of a positive non-expansive operator $p$ is the strong limit of the sequence $(p_n)_{n\in\N}$, where $p_0=0$ and $p_{n+1}:=p_n-(p-p_n^2)/2$. Applying this result to $f^\dag f$ we see that $\abs{f}$ is the strong limit of the sequence $(q_n)_{n\in\N}$ where $q_0=0$ and $q_{n+1}:=q_n-(f^\dag f-q_n^2)/2$. Similarly, $\abs{f^\dag}$ is the strong limit of the sequence $(r_n)_{n\in\N}$ defined by $r_0=0$ and $r_{n+1}:=r_n-(ff^\dag-r_n^2)/2$. Since precomposing (and postcomposing) with a non-expansive map is continuous (in the strong operator topology), to prove that $\abs{f^\dag}f=f\abs{f}$ it suffices to prove that $r_nf=fq_n$, which we do by induction. For $n=0$, both sides evaluate to $0$. Assuming the claim holds for $n$ we see that it holds for $n+1$, since
  \begin{align*}
    r_{n+1} f&=(r_n-(ff^\dag-r_n^2)/2)f \\
            &=fq_n-(ff^\dag f-fq_n^2)/2\text{ by the induction hypothesis} \\
            &=f(q_n-(f^\dag f-q_n^2)/2)=fq_{n+1}
  \end{align*}
  completing the proof.
\end{proof}

\begin{theorem}
  The category $\cat{Hilb}$ admits polar decomposition. 
\end{theorem}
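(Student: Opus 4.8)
The plan is to start from the classical polar decomposition $f=u\abs f$ in $\cat{Hilb}$ and enlarge its positive parts so that they become bimorphisms. Recall~\cite[Chapter~16]{halmos:problembook} that any $f\colon A\to B$ factors as $f=u\abs f$, where $u$ is a partial isometry with $u^\dag u$ the projection onto $\overline{\im\abs f}$ and $uu^\dag$ the projection onto $\overline{\im f}$, and where $\ker u=\ker\abs f=\ker f$. Put $P:=u^\dag u$ and $Q:=uu^\dag$. Since $\abs f$ and $\abs{f^\dag}$ are self-adjoint, $\id[A]-P$ is the projection onto $\ker\abs f$ and $\id[B]-Q$ the projection onto $\ker\abs{f^\dag}=\ker f^\dag$, using also $\overline{\im\abs{f^\dag}}=\overline{\im f}$. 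I would then take
\[ p:=u,\qquad i:=\abs f+(\id[A]-P),\qquad j:=\abs{f^\dag}+(\id[B]-Q), \]
and note that $p$ is a partial isometry and $i,j$ are self-adjoint, being sums of self-adjoint morphisms.

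Next I would verify that $i$ is a bimorphism, that is, both monic and epic, hence (in $\cat{Hilb}$) injective with dense image; the argument for $j$ is the same with $P$ and $\abs f$ replaced by $Q$ and $\abs{f^\dag}$. Decompose an arbitrary vector along the orthogonal splitting $A=\im P\oplus\im(\id[A]-P)=\overline{\im\abs f}\oplus\ker\abs f$. On $\ker\abs f$ the morphism $i$ restricts to the identity, and on $\overline{\im\abs f}$ it restricts to $\abs f$, which is injective there with dense image; orthogonality of the two summands then forces $i$ to be injective with dense image.

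It remains to establish the two factorizations $f=pi$ and $f=jp$. For the first, $u(\id[A]-P)=u-uu^\dag u=0$, so $pi=u\abs f+u(\id[A]-P)=u\abs f=f$. For the second, $(\id[B]-Q)u=u-uu^\dag u=0$, so $jp=\abs{f^\dag}u$, and it suffices to show $\abs{f^\dag}u=f$. This is exactly where Lemma~\ref{lem:absolutevaluecommutes} enters: for every $x\in A$ we have $\abs{f^\dag}u\abs f x=\abs{f^\dag}fx=f\abs f x$, so $\abs{f^\dag}u$ and $f$ agree on $\im\abs f$; both morphisms are bounded and both vanish on $\ker u=\ker\abs f=(\overline{\im\abs f})^{\perp}$, so they agree on all of $A$. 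Hence $f=pi=jp$ with $p$ a partial isometry and $i,j$ self-adjoint bimorphisms, and as $f$ was arbitrary, $\cat{Hilb}$ admits polar decomposition.

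The one step with real content is the identity $\abs{f^\dag}u=f$, \ie recognising that the left positive part $\abs{f^\dag}$ recombines with $u$ to return $f$; Lemma~\ref{lem:absolutevaluecommutes} was proved precisely for this, so with it in hand the rest is bookkeeping. The point that needs a little care is that the corrections $\id[A]-P$ and $\id[B]-Q$ must not disturb the factorizations, which works out because $\id[A]-P$ is supported on $\ker u$ while $\id[B]-Q$ is annihilated by $u$ on the right.
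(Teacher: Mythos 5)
Your proof is correct and follows essentially the same strategy as the paper: take the classical factorization $f=u\abs f$, pad $\abs f$ (resp.\ $\abs{f^\dag}$) with the projection onto its kernel to obtain the self-adjoint bimorphisms $i$ and $j$, and invoke Lemma~\ref{lem:absolutevaluecommutes} at the crucial point. The only difference is organizational and slightly to your advantage: where the paper separately polar-decomposes $f^\dag=q\abs{f^\dag}$ and then checks $q^\dag=p$ by an inner-product computation over the orthogonal summands, you define $j$ explicitly and verify $jp=\abs{f^\dag}u=f$ directly from the commutation identity $\abs{f^\dag}f=f\abs f$ together with density of $\im\abs f$ in $(\ker u)^\perp$, which bypasses that computation.
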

\begin{proof}
  We modify the standard construction of a polar decomposition~\cite[Problem 134]{halmos:problembook} to satisfy Definition~\ref{def:polar}: the standard construction gives a factorization of $f\colon H\to K$ into $f=p\abs{f}$, where $p$ is a partial isometry satisfying $\ker p=\ker f$. This in fact fixes $p$ uniquely: $H$ decomposes into a direct sum $H\cong \ker f\oplus\ker f^\perp$ and since (i) $\ker f=\ker\abs{f}=\ker p$  and (ii) $\ker \abs{f}^\perp=\overline{\im \abs{f}}$ (see \eg\cite[Lemma 2.1]{buschetal:quantummeasurement}), $H$ in fact decomposes as $H\cong \overline{\im \abs{f}}\oplus \ker p$. The action of $p$ on its kernel is clear and on its orthocomplement it has to be given by continuous extension of $\abs{f}x\mapsto fx$ for $p\abs{f}=f$ to hold. Now, let $r$ be the projection onto $\ker p=\ker\abs{f}$, and set $i:=\abs{f}+r$. On $\ker \abs{f}$ $i$ acts as the identity and on its orthocompletent it acts as $\abs{f}$. Hence $i$ is positive. By construction, $\ker i=0$ so that $i$ is monic. Being self-adjoint it is also epic. Moreover, since $\ker p=\ker\abs{f}$ the factorization $f=pi$ follows from $f=p\abs{f}$.

  Similarly $f^\dag$ factors as $f^\dag=q\abs{f^\dag}$ where $q$ is defined similarly, and this factorization can be modified to obtain $f^\dag=q j$, where $j$ is a self-adjoint (in fact positive) bimorphism. Taking daggers, we have $f=jq$. 

  Hence it remains to show that $q^\dag=p$. Since the dagger in \cat{Hilb} is given by adjoints of bounded linear maps, this boils down to showing that
    \begin{equation}\label{eq:inprod}\inprod{px}{y}=\inprod{x}{qy}\end{equation}
  holds for all $x\in H$ and $y\in K$. We can now use our orthogonal decompositions $H\cong \overline{\im \abs{f}}\oplus \ker p$ and $K\cong\overline{\im \abs{f^\dag}}\oplus \ker q $ and consider separately the cases where $x$ and $y$ are in each of the summands. If $x\in \ker p$ then the left hand side of \eqref{eq:inprod} equals zero. But then $x$ is ortohogonal to $\im {f}=\im f=\im q$ so the right hand side is zero as well. Similarly, both sides equal zero when $y\in\ker q$. Thus we're left to consider the case when $x\in \overline{\im \abs{f}}$ and $y\in \overline{\im \abs{f^\dag}}$, and by continuity we may further assume $x\in \im \abs{f}$ and $y\in\im\abs{f^\dag}$. Pick $z\in H$ and $w\in K$ with $\abs{f}z=x$ and $\abs{f^\dag} w=y$. Now $px=p\abs{f}z=fz$ and $qy=q\abs{f^\dag}w=f^\dag w$, so \eqref{eq:inprod} boils down to whether 
    \[\inprod{fz}{\abs{f^\dag}w}=\inprod{\abs{f}z}{f^\dag w}\]
  Now the left hand side equals $\inprod{\abs{f^\dag}fz}{w}$  and the right hand side equals $\inprod{f\abs{f}z}{w}$ so they are equal by Lemma~\ref{lem:absolutevaluecommutes}, completing the proof.
\end{proof}

In \cat{Hilb}, we can not guarantee that $i$ and $j$ are isomorphisms in general. A good example is when $f\colon \ell^2(\N)\to\ell^2(\N)$ is defined on the $n$-th basis element $e_n$ by $f(e_n)=e_n/(n+1)$. Then the factorization above gives $\id$ as the partial isometry and $f$ as the positive bimorphism. However, $f$ does not have an inverse in \cat{Hilb} -- indeed, the ``inverse'' defined by $e_n\mapsto (n+1)e_n$ is not bounded.

Other dagger categories admitting polar decomposition include inverse categories, such as $\cat{PInj}$, and any groupoid, in which every morphism itself is already a partial isometry.

Let us remark here that polar decomposition is quite unlike an orthogonal factorization system. First, the composition of partial isometries need not be a partial isometry, and the composition of self-adjoint bimorphisms need not be self-adjoint. Second, an isomorphism need not be a partial isometry nor self-adjoint. Third, $p$, $i$, and $j$ are not required to be unique to $f$. Fourth, the factorization $f=pi$ respects the dagger: even though one may also factor $f^\dag=qj$ and hence $f=j^\dag q$, we are additionally requiring that $p=q$.

Recall that a dagger category is unitary whenever two objects being isomorphic implies that they are also unitarily isomorphic.

\begin{proposition} 
  Dagger categories that have polar decomposition are unitary. 
\end{proposition}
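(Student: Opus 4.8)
The plan is to show that any isomorphism $f\colon A\to B$ in a dagger category \cat{C} admitting polar decomposition is ``covered'' by a unitary morphism $A\to B$, namely the partial isometry occurring in a polar decomposition of $f$; this is exactly what is needed to conclude that \cat{C} is unitary.

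First I would fix a polar decomposition $f=pi=jp$ as in Definition~\ref{def:polar}, with $p\colon A\to B$ a partial isometry and $i\colon A\to A$, $j\colon B\to B$ self-adjoint bimorphisms. The strategy is to transfer the invertibility of $f$ onto $p$. Since $f$ is an isomorphism it is monic, and the remark following Definition~\ref{def:polar} already records that this forces $p$ to be monic, hence dagger monic (a monic partial isometry), so that $p^\dag p=\id[A]$. Dually, $f$ is epic; from the factorization $f=p\circ i$ and the elementary fact that if a composite $p\circ i$ is epic then so is $p$, we get that $p$ is epic, hence dagger epic (an epic partial isometry), so that $pp^\dag=\id[B]$.

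Combining the two, $p$ satisfies both $p^\dag p=\id[A]$ and $pp^\dag=\id[B]$, \ie $p$ is unitary. Hence in \cat{C} any two isomorphic objects are already unitarily isomorphic, which is precisely the definition of \cat{C} being unitary.

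I do not foresee a substantial obstacle: each step is a one-line consequence of Definition~\ref{def:polar} together with the basic facts about partial isometries and dagger monics/epics recalled in Section~\ref{sec:dagcats}. The one point deserving a moment's attention is that the partial isometry in a polar decomposition really does have type $A\to B$ (rather than being an endomorphism), but this is immediate from the commuting square in Definition~\ref{def:polar}, in which $p$ is the common vertical leg $A\to B$.
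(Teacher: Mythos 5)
Your proof is correct and takes essentially the same route as the paper: both arguments show that the partial isometry $p$ in a polar decomposition of an isomorphism $f$ inherits invertibility from $f$ and is therefore unitary. The only cosmetic difference is that the paper extracts explicit one-sided inverses for $p$ from the two factorizations ($p$ has right inverse $if^{-1}$ and left inverse $f^{-1}j$), whereas you cancel via monic/epic and invoke the characterization of dagger monics/epics as monic/epic partial isometries; the content is the same.
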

\begin{proof}
  Factor an isomorphism $f\colon A\to B$ as $f=pi=jp$ with $p$ a partial isometry and $i,j$ self-adjoint bimorphisms.  Now $pi=f$ implies that $p$ has a right inverse and $jp=f$ that it has a left inverse. Hence $p$ must be an isomorphism. Also being a partial isometry, $p \colon A \to B$ is therefore unitary.
\end{proof}

The theme of the rest of this section will be that dagger limits may be viewed as the partial isometry part of a polar decomposition of ordinary limits. 

\begin{example}\label{ex:polarlimitsaredaggerlimits}
  The theme of the rest of this section will be that dagger limits may be viewed as the partial isometry part of a polar decomposition of ordinary limits. Here are some examples to warm up to this theme.
  \begin{itemize}
    \item   
    Let $e \colon E \to A$ be an equalizer of morphisms $f,g \colon A \to B$ in a dagger category. If $e=pi=jp$ is a polar decomposition, then $p \colon A \to B$ is a dagger equalizer of $f$ and $g$.
    Indeed, since $i$ is a bimorphism we see that $fp=gp$, so that $p$ factors through $e$ as $p=ek$ for some $k\colon A\to A$. Precomposing with $i$ we see that $e=eki$, whence $ki=\id$. Since $i$ is a bimorphism and has a left inverse, it is an isomorphism and $k=i^{-1}$. Hence $p=ei^{-1}$ is an equalizer and hence monic, and by definition $p$ is a partial isometry and hence dagger monic.

   \item 
    A cone over a family of dagger monomorphisms $f_i \colon A_i \to B$ consists formally of a map $l_i\colon A\to A_i$ for each $i$ and of a map $f\colon A\to B$. However, the whole cone is determined by the map $f$: this is because $f_il_i=f$ implies $l_i=f_i^\dag f_i l_i=f_i^\dag f$ by dagger monicness of $f_i$. A map $f\colon A\to B$ defines a cone in this manner iff $f_i f_i^\dag f=f$ for each $i$. Now, if $f$ defines a limiting cone and $f=pi=jp$ is a polar decomposition, then $p \colon A \to B$ is a dagger intersection of $\{f_i\}$: since $i$ is a bimorphism we see that $p$ defines a cone for $f_i$ and hence factors through $f$. As in the equalizer case, this implies that $i$ is an isomorphism whence  $p=fi^{-1}$ is a pullback of monics and hence monic. Since $p$ is a partial isometry by definition it is also dagger monic.

     \item 
     Let a projection $e=e^\dag=e^2 \colon A \to A$ be split by $f \colon B \to A$ and $g \colon A \to B$, so $e=fg$ and $gf=\id[B]$. If $f=pi=jp$ is a polar decomposition, then $p \colon B \to A$ is a dagger splitting of $e$: indeed $f$ is the equalizer of $\id$ and $e$ and hence so is $p$ by the above, so that $p$ is monic and hence dagger monic. It remains to check that $e=pp^\dag$. Now $ep=p$ so that $pp^\dag=epp^\dag$ as well. Applying the dagger to both sides of this results in $pp^\dag=pp^\dag e$.  On the other hand, $e=e^2=fge=pige=pp^\dag pige=pp^\dag fge=pp^\dag e^2=pp^\dag e$. Hence $e=pp^\dag e=pp^\dag$.
  \end{itemize}
\end{example}

These examples are no accident, and the theorems proven below will bear out this theme. Before studying polar decompositions of more general diagrams we need the following lemma.

\begin{lemma}\label{lem:PiOfPolarCommutes}
  Let $f\colon A\to B$  be a morphism in a dagger category with polar decomposition $f=pi=jp$, and let $g\colon A\to A$ be self-adjoint. If $g$ commutes with $f^\dag f$, then it also commutes with $p^\dag p$.
\end{lemma}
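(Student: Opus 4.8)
The plan is to phrase everything in terms of the projection $q := p^\dag p$ (a projection since $p$ is a partial isometry) and the self-adjoint morphism $i$, and to play the two factorizations $f = pi$ and $f = jp$ off against each other. Since $i = i^\dag$ we have $f^\dag f = (pi)^\dag(pi) = i\,p^\dag p\,i = iqi$, so the hypothesis is really a statement about $iqi$. The goal is $gq = qg$, and a convenient first reduction is that, since $g$ and $q$ are self-adjoint, it suffices to prove $qgq = gq$: taking daggers of that identity gives $qgq = qg$, whence $gq = qgq = qg$.

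I would then record a few one-line identities. From $f = jp$ and the partial-isometry law $pp^\dag p = p$ we get $fq = fp^\dag p = jpp^\dag p = jp = f$. From $f = pi$ we get $qi = p^\dag p i = p^\dag f$, and, since $i$ is self-adjoint, $ip^\dag = (pi)^\dag = f^\dag$.

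The crux of the argument --- and the step I expect to be the real obstacle --- is to show that $q$ commutes with $i$; this is where both halves of the polar decomposition get used. Chaining the identities above,
\[ iqiq = i\,(qi)\,q = ip^\dag(fq) = ip^\dag f = f^\dag f = iqi. \]
Because $i$ is a bimorphism it is in particular monic, so cancelling the leftmost $i$ from $i(qiq) = i(qi)$ gives $qiq = qi$; taking daggers (all of $q$, $i$ being self-adjoint) gives $qiq = iq$, hence $iq = qi$.

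With $q$ and $i$ commuting we have $f^\dag f = iqi = qi^2 = i^2 q$, and since bimorphisms are closed under composition, $i^2$ is again a bimorphism, in particular an epimorphism. Now the hypothesis that $g$ commutes with $f^\dag f$ reads $gqi^2 = qi^2 g$. Premultiplying by $q$ and using $q^2 = q$ gives $qgqi^2 = qi^2 g$, and the right-hand side equals $gqi^2$ by the hypothesis again; right-cancelling the epimorphism $i^2$ yields $qgq = gq$. By the reduction of the first paragraph this says exactly that $g$ commutes with $q = p^\dag p$, completing the proof.
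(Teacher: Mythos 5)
Your proof is correct. The endgame coincides with the paper's: both arguments reduce the claim to $qgq = gq$ for $q = p^\dag p$, obtain that equation by writing $f^\dag f$ as $q$ followed by an epimorphism built from $i$ and then cancelling, and finally upgrade $qgq = gq$ to $qg = gq$ using self-adjointness of $g$ and $q$. Where you diverge is in how you reach the key identity $f^\dag f = q i^2$. The paper gets it in one line by mixing the two halves of the polar decomposition: $f^\dag f = (jp)^\dag (pi) = p^\dag (jp) i = p^\dag (pi) i = p^\dag p\, i\, i$. You instead first establish the stronger intermediate fact that $q$ commutes with $i$ --- the step you flag as the crux --- via $iqiq = ip^\dag fq = ip^\dag f = f^\dag f = iqi$, cancellation of the monomorphism $i$, and a dagger argument, and only then conclude $iqi = qi^2 = i^2 q$. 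Your detour is sound, and $qi = iq$ is a mildly interesting fact in its own right, but it is avoidable: the single substitution $jp = pi$ inside $p^\dag j p i$ does the job directly, which is why the paper never needs to cancel a monomorphism or invoke that $i^2$ is again a bimorphism (cancelling a single copy of the epimorphism $i$ twice, as the paper does, works just as well).
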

\begin{proof} 
  Observe that $f^\dag f = p^\dag p i i$.
  \[\begin{tikzpicture}
    \matrix (m) [matrix of math nodes,row sep=2em,column sep=4em,minimum width=2em]
    {
     A & B & A \\
     A & B \\
     A \\};
    \path[->]
    (m-1-1) edge node [left] {$i$} (m-2-1)
           edge node [above] {$f$} (m-1-2)
    (m-1-2) edge node [above] {$f^\dag$} (m-1-3)
            edge node [left] {$j$} (m-2-2)
    (m-2-2) edge[dagger] node [below] {$p^\dag$} (m-1-3)
    (m-2-1) edge[dagger] node [above] {$p$} (m-1-2)
             edge node [below] {$f$} (m-2-2)
             edge node [left] {$i$} (m-3-1)
    (m-3-1) edge[dagger] node [below] {$p$} (m-2-2);
  \end{tikzpicture}\]
  It follows that 
  \[
    p^\dag p g p^\dag p i i
    = p^\dag p g f^\dag f 
    =p^\dag p f^\dag f g 
    =f^\dag f g 
    =g f^\dag f 
    =g p^\dag p i i\text.
  \]   
  Because $i$ is a bimorphism, $p^\dag p g p^\dag p =g p^\dag p$.
  Since $g$ is self-adjoint, the left hand side of this equation is self-adjoint. Therefore also the right-hand side is self-adjoint. Thus $p^\dag p g  =g p^\dag p$.
\end{proof}

The next theorem roughly shows that ``polar decomposition turns based limits into dagger limits''.

\begin{theorem}\label{thm:polarlimitsaredaggerlimits}
  Let $\Omega$ be a basis of \cat{J}. Assume that $D\colon \cat{J}\to\cat{C}$ has a limit $l_A \colon L \to D(A)$ satisfying $l_A^\dag l_A l_B^\dag l_B=l_B^\dag l_Bl_A^\dag l_A$ for all $A,B\in \Omega$. If \cat{C} admits polar decomposition, $(D,\Omega)$ has a dagger limit.
\end{theorem}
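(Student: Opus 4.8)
The plan is to realise the dagger limit on the limiting object $L$ itself, replacing its cone by a ``straightened'' one obtained from polar decomposition, and then to verify the two axioms. First I would record the structural consequence of $\Omega$ being a basis: the components $\cat{J}_A$ of objects reachable from $A\in\Omega$ partition $\cat{J}$ into its connected components (morphisms cannot leave a component, and each $\cat{J}_A$ is connected via $A$), and $A$ is weakly initial in $\cat{J}_A$. Hence a cone over $D$ with apex $X$ is exactly a family $(c_A\colon X\to D(A))_{A\in\Omega}$ in which each $c_A$ equalizes every parallel pair $D(u),D(v)$ with $u,v\colon A\to B$ in $\cat{J}$.

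Second, for each $A\in\Omega$ pick a polar decomposition $l_A=p_Ai_A=j_Ap_A$ of the limit leg, with $p_A\colon L\to D(A)$ a partial isometry and $i_A,j_A$ self-adjoint bimorphisms. Since $i_A$ is epic and $L$ is a cone, $D(f)p_A$ does not depend on the chosen $f\colon A\to B$; so putting $p_B:=D(f)p_A$ for $B\in\cat{J}_A$ assembles the $p_A$ into a cone $\{p_B\}_{B\in\cat{J}}$ over $D$ with apex $L$. Normalization holds for this cone by construction. For independence I would use Lemma~\ref{lem:PiOfPolarCommutes} twice: the hypothesis says $l_A^\dag l_A$ commutes with $l_B^\dag l_B$, so (with $f=l_A$, $g=l_B^\dag l_B$) the lemma gives that $l_B^\dag l_B$ commutes with $p_A^\dag p_A$, and a second application (with $f=l_B$ and $g=p_A^\dag p_A$, which is self-adjoint) gives that $p_A^\dag p_A$ commutes with $p_B^\dag p_B$.

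It remains to see that $\{p_B\}$ is still a limiting cone. On a single connected component this is the argument already run in Example~\ref{ex:polarlimitsaredaggerlimits} for dagger equalizers and dagger splittings: the leg of the limit at a weakly initial object is monic, the $\cat{J}_A$-cone $\{D(f)p_A\}$ factors through the limit cone, and this factorization forces $i_A$ to have a one-sided inverse; a self-adjoint bimorphism with a one-sided inverse is an isomorphism. Globally one then glues: because $f^\dag f=p^\dag p\,i^2=i\,p^\dag p\,i$ forces $i_A$ to commute with $p_A^\dag p_A$, and because the $p_A^\dag p_A$ pairwise commute (shown above), the endomorphisms $i_A^{-1}$ can be combined into a single isomorphism $\phi\colon L\to L$ with $l_A\phi=p_A$ for every $A\in\Omega$, exhibiting $\{p_B\}$ as isomorphic as a cone to $\{l_B\}$, hence limiting; when $\Omega$ is infinite the composite defining $\phi$ is handled componentwise using that distinct elements of $\Omega$ lie in distinct connected components. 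An alternative, cleaner route is to prove the connected case (immediate, as above) and the discrete case (dagger products) separately and assemble via the disjoint-union decomposition together with the uniqueness Theorem~\ref{thm:daglimsunique}.

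The step I expect to be the main obstacle is precisely this global combination: individually the $i_A$ are only bimorphisms and need not be invertible --- as the $\ell^2(\N)$ example after the polar-decomposition theorem shows --- so the universal property of $L$ must be invoked to promote each $i_A$ to an isomorphism, and then the several $i_A^{-1}$ must be reconciled into one isomorphism of $L$. This is exactly where the commutativity supplied by the independence hypothesis, propagated through Lemma~\ref{lem:PiOfPolarCommutes}, is essential; without it (as in the non-orthogonal-decomposition example) no such $\phi$ exists.
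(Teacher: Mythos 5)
Your construction of the candidate dagger limit is exactly the one the paper uses: polar-decompose each $l_A$ with $A\in\Omega$ as $l_A=p_Ai_A=j_Ap_A$, extend to all of $\cat{J}$ by $p_B=D(f)p_A$ (well defined because $i_A$ is epic), obtain normalization by construction, and obtain independence by applying Lemma~\ref{lem:PiOfPolarCommutes} twice, first to move the commutation hypothesis from $l_A^\dag l_A$ to $p_A^\dag p_A$ and then from $l_B^\dag l_B$ to $p_B^\dag p_B$. Up to that point your argument matches the paper's and is correct. The gap is in the remaining step, showing that $\{p_B\}$ is still a limiting cone.

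Your plan is to prove that each $i_A$ is an isomorphism and then to ``combine'' the $i_A^{-1}$ into one isomorphism $\phi$ with $l_A\phi=p_A$. Two problems. First, your one-sided-inverse argument for $i_A$ needs the leg $l_A$ to be monic; that holds for the limit of the single component $\cat{J}_A$, but the $i_A$ in play is the polar part of the leg of the \emph{global} limit $L$, and $l_A\colon L\to D(A)$ is not monic once $\cat{J}$ has more than one component (only the whole family of legs is jointly monic). The cone $(p_B)$ does factor through $(l_B)$ via a unique $f$ with $l_Af=p_A$, giving $l_A=p_Ai_A=l_Afi_A$; but $fi_A$ only fixes the $A$-th leg, not the others, so it is not a cone endomorphism and you cannot conclude $fi_A=\id$. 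Second, even granting invertibility, ``combining the $i_A^{-1}$'' is not a construction: commutativity of the $p_A^\dag p_A$ does not by itself yield a morphism of $L$, for infinite $\Omega$ there is no composite to take, and the component-wise fallback presupposes a decomposition $L\cong\prod_A L_A$ as a dagger product, whose existence is exactly what is at stake (the same issue afflicts your alternative route via Theorem~\ref{thm:daglimsunique}). The paper avoids inverting the $i_A$ altogether: since $\Omega$ is a basis, the family $(j_Al_A)_{A\in\Omega}=(l_Ai_A)_{A\in\Omega}$ generates a cone, hence there is a mediating map $g$ with $l_Ag=l_Ai_A$; postcomposing with the monomorphism $j_A$ shows this is equivalent to $p_Ag=l_A$, so $g$ is a cone map $(L,l)\to(L,p)$, and $fg=gf=\id$ follows from uniqueness of cone endomorphisms. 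Replacing your gluing step by that argument repairs the proof; as written, the invertibility of the $i_A$ and the existence of $\phi$ are asserted rather than established.
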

\begin{proof} 
  Pick a polar decomposition $l_A = p_Ai_A=j_Ap_A$ for each $A \in \Omega$. For $B\in \cat{J}\setminus{\Omega}$, set $p_B=D(f)p_A$, where $A$ is the unique object in $\Omega$ with $\cat{J}(A,B)\neq\emptyset$. If $f,g\colon A\to B$, then 
  \[
    D(f)p_Ai_A=D(f)l_A=D(g) l_A=D(g)p_Ai_A
  \]
  and hence $D(f)p_A=D(g)p_A$. So $p_B$ is independent of the choice of $f\colon A\to B$, and $p_A \colon L \to D(A)$ forms a cone. By construction, each $p_A$ is a partial isometry whenever $A\in \Omega$. Moreover, by assumption and Lemma~\ref{lem:PiOfPolarCommutes}, $p_A^\dag p_A$ commutes with $l_B^\dag l_B$ when $A,B \in \Omega$. Then, by another application of the lemma $p_A^\dag p_A$ commutes with $p_B^\dag p_B$. 

  It remains to show that $p_A \colon L \to D(A)$ forms a limiting cone. We will establish this by proving that the unique map $f \colon L \to L$ of cones from $p_A$ to $l_A$ is an isomorphism. Thus we need to find a map $g \colon L \to L$ from $l_A$ to $p_A$ that is the inverse of $f$. That $g$ is a map of cones means that the triangle 
  \[\begin{tikzpicture}
    \matrix (m) [matrix of math nodes,row sep=2em,column sep=4em,minimum width=2em]
    {
     L & D(A) \\
     L  \\};
    \path[->]
    (m-1-1) edge node [left] {$g$} (m-2-1)
           edge node [above] {$l_A$} (m-1-2)
    (m-2-1) edge[dagger] node [below] {$p_A$} (m-1-2);
  \end{tikzpicture}\]
  commutes for each $A$ in $\cat{J}$, or equivalently for each $A\in \Omega$. Postcomposing with the bimorphisms $j_A$ we see that this is equivalent to finding $g\colon L\to L$ such that 
  \[\begin{tikzpicture}
    \matrix (m) [matrix of math nodes,row sep=2em,column sep=4em,minimum width=2em]
    {
     L& D(A) \\
     L & D(A) \\};
    \path[->]
    (m-1-1) edge node [left] {$g$} (m-2-1)
           edge node [above] {$l_A$} (m-1-2)
    (m-1-2) edge node [right] {$j_A$} (m-2-2)
    (m-2-1) edge node [below] {$l_A$} (m-2-2);       
  \end{tikzpicture}\]
  for each $A\in \Omega$. As $l_A \colon L \to D(A)$ is a limit cone, the existence of such a $g$ follows as soon as $j_A l_A \colon L \to D(A)$ with $A \in \Omega$ generates a cone. But
  \[
    j_A l_A=j_Ap_Ai_A= l_Ai_A
  \]
  and $l_Ai_A \colon L \to D(A)$ with $A \in \Omega$ obviously generates a cone. Thus we have found a cone map $g \colon L \to L$ from $l_A$ to $p_A$. It suffices to show that it is the inverse of $f$. On the one hand $fg=\id[L]$ by the universal property of the cone $l_A$. On the other hand, $p_A g f = p_A$ for each $A\in \Omega$, and by postcomposing with $j_A$ we see that $gf$ is also a cone map from $l_A$ to $l_A$, and thus equal to the identity.
\end{proof}

In particular, if \cat{C} admits polar decomposition and $A_1$ and $A_2$ have a product $(A_1\times A_2,p_1,p_2)$ satisfying $p_1^\dag p_1 p_2^\dag p_2=p_2^\dag p_2 p_1^\dag p_1$ then the dagger product of $A_1$ and $A_2$ exists as well. Using polar decomposition and splittings of projections, one can also construct the dagger product of $A$ and $B$ from their biproduct, without any further conditions required from the biproduct. 

Recall that, in a category with a zero object (or more generally, zero morphisms), a biproduct of $A_1$ and $A_2$ consists of $(A_1\oplus A_2,p_1,p_2,i_1,i_2)$ where $(A_1\oplus A_2,p_1,p_1)$ is a product of $A_1$ and $A_2$, $(A_1\oplus A_2,i_1,i_2)$ is their coproduct, and moreover
\begin{align*}
  p_1i_1&=\id[A_1] \quad &p_2i_2=\id[2] \\
  p_2i_1&=0_{A_1,A_2} \quad &p_1i_2=0_{A_2,A_1}
\end{align*}
or more succinctly, 
  \[p_ni_k=\delta_{k,n}:=\begin{cases}\id[A_n]\text{ if }n=k \\
                                       0_{A_k,A_n}\text{ otherwise.} \end{cases}\]

\begin{theorem}\label{thm:polarofbiprod} Let \cat{C} be a dagger category that admits polar decomposition and has dagger splittings of projections. If two objects $A_1$ and $A_2$ of \cat{C} have a biproduct, they have a dagger product as well.
\end{theorem}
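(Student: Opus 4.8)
The plan is to replace the possibly skew projections of the biproduct by orthogonal ones using polar decomposition, and then to carve out the ``orthogonal complement'' of $A_1$ inside $A_1\oplus A_2$ by splitting a projection. Since having a biproduct presupposes zero morphisms, write $B=A_1\oplus A_2$ with projections $p_n$ and injections $i_n$, so $p_ni_k=\delta_{k,n}$ and $i_1p_1+i_2p_2=\id[B]$; in particular $p_1$ is split epic with section $i_1$, and $p_1i_2=0$. First I would polar-decompose $p_1\colon B\to A_1$, writing $p_1=q_1a=bq_1$ with $q_1$ a partial isometry and $a\colon B\to B$, $b\colon A_1\to A_1$ self-adjoint bimorphisms. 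From $bq_1i_1=\id[A_1]$ and $q_1ai_1=\id[A_1]$ one sees that $b$ and $q_1$ are split epic; as $b$ is moreover a bimorphism it is an isomorphism, so $q_1=b^{-1}p_1$ has the same kernel as $p_1$ and $q_1i_2=0$. A split-epic partial isometry is a coisometry, so $q_1q_1^\dag=\id[A_1]$ and $q_1^\dag q_1$ is a projection on $B$.

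Set $e:=\id[B]-q_1^\dag q_1$, a projection on $B$; since $q_1i_2=0$ we have $ei_2=i_2$. Using the hypothesis on splittings, pick a dagger monomorphism $j\colon C\rightarrowtail B$ with $jj^\dag=e$. Then $q_1^\dag q_1 j=(\id[B]-e)j=0$, and precomposing the coisometry identity $q_1=q_1q_1^\dag q_1$ with $j$ gives $q_1j=q_1q_1^\dag q_1j=0$, whence also $j^\dag q_1^\dag=0$. Together with $q_1q_1^\dag=\id[A_1]$, $j^\dag j=\id[C]$ and $q_1^\dag q_1+jj^\dag=\id[B]$, these equations say that $q_1^\dag q_1$ and $jj^\dag$ are complementary projections split by $q_1^\dag$ and $j$, which exhibits $(B,q_1,j^\dag)$ as a dagger biproduct, and in particular a dagger product, of $A_1$ and $C$.

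It remains to identify $C$ with $A_2$. As $ei_2=i_2$, the split mono $i_2$ factors through $j$ as $i_2=j\psi$ with $\psi:=j^\dag i_2\colon A_2\to C$, and $p_2j$ is a two-sided inverse of $\psi$: indeed $(p_2j)\psi=p_2(jj^\dag)i_2=p_2ei_2=p_2i_2=\id[A_2]$, while $\psi(p_2j)=j^\dag i_2p_2j=j^\dag(\id[B]-i_1p_1)j=\id[C]-j^\dag i_1(p_1j)=\id[C]$, using $p_1j=bq_1j=0$. Hence $A_2\cong C$. Because $\cat{C}$ admits polar decomposition it is unitary, so there is a \emph{unitary} $u\colon A_2\to C$; transporting the $C$-leg of the dagger product along $u$, i.e.\ replacing $j^\dag$ by $u^\dag j^\dag\colon B\to A_2$, produces a product $(B,q_1,u^\dag j^\dag)$ of $A_1$ and $A_2$ with $q_1q_1^\dag=\id[A_1]$, $(u^\dag j^\dag)(u^\dag j^\dag)^\dag=u^\dag j^\dag ju=\id[A_2]$, $q_1(u^\dag j^\dag)^\dag=q_1ju=0$ and $(u^\dag j^\dag)q_1^\dag=u^\dag j^\dag q_1^\dag=0$ — that is, a dagger product of $A_1$ and $A_2$.

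I expect the main obstacle to be the bookkeeping around the two middle steps: verifying that splitting the complementary projection $e$ genuinely yields a dagger product of $A_1$ and $C$ (rather than merely a diagram of the right shape), and then that the resulting $C$ can be replaced by $A_2$ through a \emph{unitary} — available precisely because polar decomposition makes $\cat{C}$ unitary — so that the four dagger-product equations survive the transport.
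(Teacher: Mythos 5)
Your strategy --- orthogonalize $p_1$ by polar decomposition and then split the complementary projection to manufacture the second leg --- is appealing, and the individual deductions about $q_1$ (that $b$ is invertible, that $q_1$ is a split-epic partial isometry and hence a coisometry with $q_1i_2=0$) are correct. However, there is a genuine gap: the argument presupposes additive, indeed subtractive, enrichment that the hypotheses do not provide. The definition of biproduct in force here (recalled just before the theorem) asks only for zero morphisms and the equations $p_ni_k=\delta_{k,n}$; it does \emph{not} include $i_1p_1+i_2p_2=\id[B]$, and a single biproduct of $A_1$ and $A_2$ does not equip $\cat{C}$ with an addition on hom-sets (that requires all finite biproducts, and even then yields only a commutative monoid, with no subtraction). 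Consequently $e:=\id[B]-q_1^\dag q_1$ is not a well-defined morphism, so there is no projection to hand to the splitting hypothesis, and the later computations $q_1^\dag q_1 j=(\id[B]-e)j$ and $\psi(p_2j)=j^\dag(\id[B]-i_1p_1)j$ are likewise unavailable. Since these two steps are the heart of your construction, the proof does not go through at the stated level of generality (it would be fine in, say, an abelian-group-enriched dagger category whose biproducts satisfy the summation identity).

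The paper's proof is arranged precisely to avoid any use of addition: it polar-decomposes \emph{both} $p_1$ and $i_2$, obtaining maps $d_1,d_2$ with $d_nd_k^\dag=\delta_{n,k}$ purely from $p_ni_k=\delta_{n,k}$ and (co)monicity; it then uses the universal properties of the product and coproduct halves of $A_1\oplus A_2$ to build comparison maps $f,g$ that are simultaneously cone and cocone morphisms, and splits the projection $h=fgg^\dag f^\dag$ --- a projection that exists without forming any complement --- to carve out the dagger product. To rescue your approach you would need to replace $\id[B]-q_1^\dag q_1$ by a projection constructed from the data (for instance from a polar decomposition of $i_2$) rather than by subtraction, at which point you are essentially reconstructing the paper's argument.
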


\begin{proof}
  Given two objects $A_1$ and $A_2$, take their biproduct $(A_1\oplus A_2,p_1,p_2,i_1,i_2)$.
  Assume first that we've produced a cone $q_i\colon P\to A_i$ such that $q_nq^\dag_k=\delta_{n,k}$.  As $(A_1\oplus A_2, i_1,i_2)$ is a coproduct and $(A_1\oplus A_2,p_1,p_2)$ is a product we can find unique maps $f\colon A_1\oplus A_2\to P$ and $g\colon P\to A_1\oplus A_2$ making the diagram 
  \[\begin{tikzpicture}
     \matrix (m) [matrix of math nodes,row sep=2em,column sep=4em,minimum width=2em]
     {
      A_k & & A_n \\
      A_1\oplus A_2 & P & A_1\oplus A_2 \\};
     \path[->]
     (m-1-1) edge node [left] {$i_k$} (m-2-1)
             edge node [above] {$q_k^\dag$} (m-2-2)
            edge node [above] {$\delta_{n,k}$} (m-1-3)
      (m-2-1) edge node [below] {$f$} (m-2-2) 
      (m-2-2) edge node [above] {$q_n$} (m-1-3)
              edge node [below] {$g$} (m-2-3)
      (m-2-3) edge node [right] {$p_n$} (m-1-3);
  \end{tikzpicture}\]
commute. By the universal properties of $A_1\oplus A_2$ this implies that $gf=\id$. Now, $f$ is a map of cocones $(A_1\oplus A_2,i_1,i_2)\to (P,q_1^\dag,q_2^\dag)$. We will prove that it is also a map of cones $(A_1\oplus A_2,p_1,p_2)\to (P,q_1,q_2)$. Consider the diagram 
        \[\begin{tikzpicture}
     \matrix (m) [matrix of math nodes,row sep=2em,column sep=4em,minimum width=2em]
     {
      A_1\oplus A_2 &  & P \\
      A_k &  A_1\oplus A_2  & A_n \\};
     \path[->]
     (m-2-1) edge node [left] {$i_k$} (m-1-1)
            edge node [below] {$i_k$} (m-2-2)
            edge node [above] {$q_k^\dag$} (m-1-3)
     (m-1-1) edge node [above] {$f$} (m-1-3)
     (m-1-3) edge node [right] {$q_n$} (m-2-3)
     (m-2-2) edge node [below] {$p_n$} (m-2-3);
    \end{tikzpicture}\]
The bottom part commutes since both paths equal $\delta_{n,k}$ and the top part commutes by definition of $f$. Hence the rectangle commutes, and since $i_1$ and $i_2$ are jointly epic this shows that $f$ is a map of cones. Now, $g^\dag$ is a map of cocones $(A_1,A_2,p_1^\dag,p_2^\dag)\to (P,q_1^\dag,q_2)$ so the same argument shows that $g^\dag$ is also a map of cones. Taking the dagger again, this means that $g$ is not only a map of cones $(P,q_1,q_2)\to (A_1\oplus A_2,p_1,p_2)$ but also a map of cocones $(P,q_1^\dag,q_2^\dag)\to (A_1\oplus A_2,i_1,i_2)$. These observations are true for any cone $P$ satisfying $q_nq^\dag_k=\delta_{n,k}$. As any such cone satisfies normalization and independence, it is sufficient to find a cone $P$ for which we can prove that $fg=\id[P]$ since then $(P,q_1,q_2)$ will also be a product. Finding such a $P$ is what we'll do in the remainder.

  Factorize $p_1$ as $p_1=pi=jp$ and $i_2$ as $i_2=kr=rl$. Set $d_1:=p$ and $d_2:=r^\dag$. We claim that
    \[d_nd_k^\dag=\delta_{n,k}\]
  Indeed, since $p_1$ and $i_2^\dag$ are epimorphisms the maps $d_1$ and $d_2$ are dagger epic. Moreover, $0=p_2i_1=j d_1d_2^\dag l=j0l$ whence $d_1d_2^\dag=0$ and thus $d_2d_1^\dag=0$. Hence $(A_1\oplus A_2,d_1,d_2)$ is a cone satisfying (i), so as remarked at the end of the first half of the proof, we get maps 
    \[f\colon (A_1\oplus A_2,p_1,p_2,i_1,i_2)\leftrightarrows (A_1\oplus A_2,d_1,d_2,d_1^\dag,d_2^\dag)\colon g\] 
  that are maps of cones and cocones. Taking daggers, we have maps 
    \[g^\dag\colon (A_1\oplus A_2,i_1^\dag,i_2^\dag,p_1^\dag,p_2^\dag)\leftrightarrows (A_1,A_2,d_1,d_2,d_1^\dag,d_2^\dag)\colon f^\dag\]
  that are also compatible with the (co)cone structures. Moreover, since $(A_1\oplus A_2,p_1,p_2,i_1,i_2)$ and $(A_1\oplus A_2,i_1^\dag,i_2^\dag,p_1^\dag,p_2^\dag)$ are biproducts, there is exactly one (co)cone map in both directions, and these are isomorphisms. Since 
    \[f^\dag f\colon (A_1\oplus A_2,p_1,p_2,i_1,i_2)\leftrightarrows(A_1\oplus A_2,i_1^\dag,i_2^\dag,p_1^\dag,p_2^\dag) \colon gg^\dag\]
  are (co)cone maps, these maps have to be inverses to each other. 
  Hence $h:=fg g^\dag f^\dag$ is a self-adjoint cone map $(A_1\oplus A_2,d_1,d_2)\to (A_1\oplus A_2,d_1,d_2)$ satisfying
    \[h^2=(fg g^\dag f^\dag)(fg g^\dag f^\dag)=f(g g^\dag f^\dag f)g g^\dag f^\dag=f g g^\dag f^\dag=h\]
  Thus $h$ is a projection; let $e\colon P\to (A_1\oplus A_2)$ split it and set $q_i:=d_ie$. Now  \[q_nq_k^\dag=d_n ee^\dag d_k^\dag=d_n h d_k^\dag=d_nd_k=\delta_{n,k}\]
  so we have canonical maps 
      \[\tilde{f}\colon (A_1\oplus A_2,p_1,p_2,i_1,i_2)\leftrightarrows (P,q_1,q_2,q_1^\dag,q_2^\dag)\colon \tilde{g}\] 
  and $\tilde{g}\circ\tilde{f}=\id[A_1\oplus A_2]$ holds automatically, whereas whether $\tilde{f}\circ \tilde{g}=\id[P]$ is at stake. By construction $e$ is a map of cones and hence $e^\dag$ is a map of cocones. Hence the bottom path in region (i) of
    \[\begin{tikzpicture}
     \matrix (m) [matrix of math nodes,row sep=2em,column sep=3.5em,minimum width=2em]
     {
      P & (A_1\oplus A_2,d_1,d_2) & (A_1\oplus A_2,i_1^\dag,i_2^\dag) & (A_1\oplus A_2,p_1,p_2) \\
       &  P  & (A_1\oplus A_2,d_1,d_2) \\
       && P \\};
     \path[->]
     (m-1-1) edge node [above] {$e$} (m-1-2)
             edge node [below] {$\id$} (m-2-2)
             edge[out=45,in=135,looseness=.5]  node [above] {$\tilde{g}$} (m-1-4)
     (m-1-2) edge node [above] {$f^\dag$} (m-1-3)
             edge node [left] {$e^\dag$} (m-2-2)
             edge node [above] {$h$} (m-2-3)
     (m-1-3) edge node [above] {$gg^\dag$} (m-1-4)
     (m-1-4) edge[out=-112.5,in=0,looseness=.5]  node[below] {$\tilde{f}$} (m-3-3) 
             edge node [above] {$f$} (m-2-3)
     (m-2-2) edge node [below] {$e$} (m-2-3)
             edge node [below] {$\id$} (m-3-3)
      (m-2-3) edge node [right] {$e^\dag$} (m-3-3);
      \node[gray] at (-0.6,2.5) {(i)};
      \node[gray] at (3.4,-0.15) {(ii)};
      \node[gray] at (1.1,.65) {(iii)};
    \end{tikzpicture}\]
 is a map of cones and thus equal to $\tilde{g}$, establishing commutativity of region (i). Similarly, $e^\dag f$ is a composite of two maps of cocones and hence equal to $\tilde{f}$, so that region (ii) commutes. Region (iii) commutes by definition of $h$ and the remaining triangles on the left commute by definition of $e$. Hence the whole diagram commutes, showing that $\tilde{f}\circ \tilde{g}=\id[P]$, as desired.
\end{proof}

\begin{corollary} 
  A dagger category admitting polar decomposition has finitely based dagger limits if and only if its underlying category has equalizers, intersections and 
  either (i) finite biproducts or (ii) finite products $(A_1\dots A_,p_1,\dots p_n)$ satisfying $p^\dag_i p_ip^\dag_j p_j=p^\dag_j p_jp^\dag_i p_i$. 
\end{corollary}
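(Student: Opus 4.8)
The plan is to run everything through Theorem~\ref{thm:finitecompleteness}, which says that a dagger category is finitely based dagger complete exactly when it has dagger equalizers, dagger intersections and finite dagger products. Matching these three dagger limits with the ordinary limits in the statement is then the whole content: the dagger itself does the work in one direction, and polar decomposition does it in the other.

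For the implication from left to right, suppose $\cat{C}$ is finitely based dagger complete. Dagger equalizers are equalizers, dagger intersections are intersections and finite dagger products are finite products, so the underlying category has the first two families of limits. Moreover the empty dagger product is a terminal object, hence a zero object, so $\cat{C}$ has zero morphisms, and Proposition~\ref{prop:limswithzeros} applies to discrete diagrams: in a finite dagger product $(P,p_1,\dots,p_n)$ each $p_i$ is a partial isometry and $p_ip_j^\dag=0$ for $i\neq j$. A zero object also supplies, via the product's universal property, a map $\iota_i$ with $p_j\iota_i=\delta_{i,j}$, so each $p_i$ is a split epimorphism; cancelling $p_i$ in $p_ip_i^\dag p_i=p_i$ gives $p_ip_i^\dag=\id[A_i]$. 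Since the dagger is a contravariant isomorphism of categories, $(P,p_1^\dag,\dots,p_n^\dag)$ is a coproduct cone, and $p_ip_j^\dag=\delta_{i,j}$ now exhibits $(P,p_1,\dots,p_n,p_1^\dag,\dots,p_n^\dag)$ as a biproduct, so (i) holds, and in particular ``(i) or (ii)'' holds.

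For the converse, assume $\cat{C}$ admits polar decomposition and has equalizers, intersections and (i) or (ii); we must produce dagger equalizers, dagger intersections and finite dagger products. Since $\cat{C}$ has equalizers, every idempotent splits, the splitting of $e$ being the equalizer of $e$ and the identity. Now dagger equalizers, dagger splittings of projections, and dagger intersections are all dagger limits over shapes admitting a one-element basis: for dagger equalizers the basis is the domain of the parallel pair; for dagger splittings it is the single object; and a dagger intersection of dagger monics $m_i\colon A_i\to B$ may, as observed in Example~\ref{ex:daggershaped}, be computed as the dagger limit of the projections $m_im_i^\dag\colon B\to B$, whose shape again has a one-element basis (note that the wide-pullback shape itself is not based, which is why this detour is needed). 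In each case the underlying ordinary limit exists---an equalizer, an idempotent splitting, and an intersection of subobjects, respectively---and the independence hypothesis of Theorem~\ref{thm:polarlimitsaredaggerlimits} is vacuous since $\Omega$ is a singleton, so that theorem produces all three; in particular $\cat{C}$ has dagger splittings of projections. (Equivalently one reads these off the three items of Example~\ref{ex:polarlimitsaredaggerlimits}.)

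It remains to obtain finite dagger products. Under hypothesis (ii), Theorem~\ref{thm:polarlimitsaredaggerlimits} applied to a discrete diagram on $n$ objects with $\Omega$ the set of all objects converts the given finite product---whose projections satisfy exactly the required commutation---into a dagger product, the cases $n=0,1$ giving the terminal object and the objects themselves. Under hypothesis (i), $\cat{C}$ admits polar decomposition and has dagger splittings of projections, so Theorem~\ref{thm:polarofbiprod} yields binary dagger products; together with the zero object (the empty biproduct), a routine induction extends this to all finite dagger products, Remark~\ref{rem:pisetc} and Proposition~\ref{prop:limswithzeros} ensuring that iterated projections remain partial isometries with commuting induced projections (alternatively, the proof of Theorem~\ref{thm:polarofbiprod} goes through verbatim for $n$-ary biproducts). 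Theorem~\ref{thm:finitecompleteness} then gives the conclusion. I expect the only genuinely fiddly point to be this passage from binary to finite dagger products in case (i); everything else is bookkeeping about which shapes are based so that Theorem~\ref{thm:polarlimitsaredaggerlimits} applies.
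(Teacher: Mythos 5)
Your proof is correct and follows essentially the same route as the paper, whose own proof simply cites Example~\ref{ex:polarlimitsaredaggerlimits} together with Theorems~\ref{thm:finitecompleteness}, \ref{thm:polarofbiprod} and~\ref{thm:polarlimitsaredaggerlimits}. The extra detail you supply---in particular the induction from binary to $n$-ary dagger products in case (i), where Remark~\ref{rem:pisetc} and Proposition~\ref{prop:limswithzeros} do ensure the iterated projections remain partial isometries with commuting induced projections---is exactly the bookkeeping the paper leaves implicit, and you have it right.
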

\begin{proof}
  Case (i) follows from Example~\ref{ex:polarlimitsaredaggerlimits} and Theorems~\ref{thm:finitecompleteness} and \ref{thm:polarofbiprod}, whereas case (ii) follows directly from Theorems~\ref{thm:finitecompleteness} and~\ref{thm:polarlimitsaredaggerlimits}.
\end{proof}

What made the previous results work for based diagrams is that we did not need to worry about the path taken to an object. However, if $A,B\in\Omega$ are distinct and admit maps $f\colon A\to C\leftarrow B\colon g$, the equation $D(f)l_A=l_C=D(g)l_B$ need not imply that $D(f)p_A=D(g)p_B$. Hence forgetting about the bimorphisms is not possible for arbitrary diagrams, and to make it work one has to change the diagram so that joint polar decomposition becomes available. The following theorem makes precise this idea that ``polar decomposition turns limits into dagger limits of isomorphic diagrams''. Recall that a category is balanced if all bimorphisms in it are isomorphisms.

\begin{theorem}\label{thm:daglimsofisomorphicdiagrams} 
  Consider a diagram  $D\colon \cat{J}\to\cat{C}$ in a balanced dagger category $\cat{C}$ that admits polar decomposition.
  Suppose that the diagram has a limit $l_A \colon L \to D(A)$ such that $l_A^\dag l_A l_B^\dag l_B=l_B^\dag l_Bl_A^\dag l_A$ for all $A$ and $B$ in some weakly initial $\Omega$.
  There is a diagram $E$ naturally isomorphic to $D$ such that $(E,\Omega)$ has a dagger limit.
\end{theorem}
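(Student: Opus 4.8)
The plan is to transport $D$ along the isomorphisms produced by polar decomposition, so that the given limiting cone becomes normalized on $\Omega$ without losing its universal property. First I would, for each $A\in\Omega$, fix a polar decomposition $l_A = p_A i_A = j_A p_A$; since $\cat{C}$ is balanced, the self-adjoint bimorphisms $i_A$ and $j_A$ are in fact isomorphisms. The observation to exploit is the factorization $l_A = j_A p_A$: it rewrites as $p_A = j_A^{-1} l_A$, so postcomposing the leg $l_A$ with $j_A^{-1}$ replaces it by the partial isometry $p_A$.

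Next I would build the target $E$ together with a natural isomorphism $\theta\colon D\Rightarrow E$ directly. Let $E$ have the same objects as $D$, set $\theta_X = j_X^{-1}$ for $X\in\Omega$ and $\theta_X = \id[D(X)]$ otherwise, and define $E(f) := \theta_Y\circ D(f)\circ\theta_X^{-1}$ for $f\colon X\to Y$; one checks routinely that $E$ is a functor and $\theta$ a natural isomorphism. Because precomposing a limiting cone with an isomorphism of diagrams again yields a limiting cone, $L$ with legs $\tilde{l}_X := \theta_X l_X$ is a limit of $E$, and $\Omega$ remains weakly initial, so $(E,\Omega)$ is a legitimate shape for dagger limits. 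For $A\in\Omega$ we compute $\tilde{l}_A = j_A^{-1} l_A = j_A^{-1} j_A p_A = p_A$, a partial isometry, so the normalization axiom holds for $(E,\Omega)$.

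It remains to verify independence, namely that the projections $\tilde{l}_A^\dag \tilde{l}_A = p_A^\dag p_A$ commute for $A,B\in\Omega$, and here Lemma~\ref{lem:PiOfPolarCommutes} carries the argument, applied twice. First take $g = l_B^\dag l_B$, which is self-adjoint and commutes with $l_A^\dag l_A$ by hypothesis; applying the lemma to the polar decomposition of $l_A$ shows that $l_B^\dag l_B$ commutes with $p_A^\dag p_A$. Now $p_A^\dag p_A$ is a projection, hence self-adjoint, and it commutes with $l_B^\dag l_B$, which is the ``$f^\dag f$'' for $f = l_B$; so a second application of the lemma, this time to the polar decomposition of $l_B$, gives that $p_A^\dag p_A$ commutes with $p_B^\dag p_B$. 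Hence $L$ with cone $\tilde{l}$ is a dagger limit of $(E,\Omega)$, and $E\cong D$. I do not expect a genuine obstacle: the only subtleties are choosing the correct one of the two bimorphisms $i_A,j_A$ to cancel, and recognizing that commutativity of the induced projections survives polar decomposition, which is exactly the content of Lemma~\ref{lem:PiOfPolarCommutes}.
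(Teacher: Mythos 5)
Your proposal is correct and follows essentially the same route as the paper: polar-decompose the legs, use balancedness to invert the self-adjoint bimorphisms, conjugate the diagram to make the (un)normalized legs into partial isometries, and obtain independence by two applications of Lemma~\ref{lem:PiOfPolarCommutes} exactly as the paper does. The only (immaterial) difference is that the paper conjugates by $j_A$ at every object of $\cat{J}$, so that every leg of the new cone is a partial isometry, whereas you conjugate only at objects of $\Omega$ and leave the other legs alone --- which suffices, since normalization is only demanded on $\Omega$.
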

\begin{proof}
  Pick a polar decomposition $l_A=p_Ai_A=j_Ap_A$ for each $A \in \cat{J}$. As \cat{C} is balanced, each $i_A$ and $j_A$ is an isomorphism, so we can define a new diagram $E\colon \cat{J}\to\cat{C}$ by $E(A)=D(A)$ on objects, and by $E(f)=j_B^{-1}D(f)j_A$ on morphisms $f\colon A\to B$. By construction $j_A \colon E(A) \to D(A)$ forms a natural isomorphism $E\Rightarrow D$. Moreover, since $l_A=j_Ap_A$, in fact $L$ is a limit of $E$ with a limiting cone $p_A \colon L \to E(A)$ of partial isometries.
  It remains to check independence. This is done as in the proof of Theorem~\ref{thm:polarlimitsaredaggerlimits}. By assumption and Lemma \ref{lem:PiOfPolarCommutes}, $p_A^\dag p_A$ commutes with $l_B^\dag l_B$ when $A,B \in \Omega$. Then, by another application of the lemma $p_A^\dag p_A$ commutes with $p_B^\dag p_B$. 
\end{proof}

\section{Commutativity of limits and colimits}\label{sec:commutativity}

In this section we investigate to what extent dagger limits commute with dagger (co)limits. To avoid multiply-indexed morphisms, in this section we do not name maps that are part of an obvious limiting cone (or daggers thereof).


Let us start by looking at whether dagger limits commute with dagger limits.
Assume that \cat{C} has all $(\cat{J},\Omega)$-shaped dagger limits and all $(\cat{K},\Psi)$-shaped dagger limits. Then, a bifunctor $D\colon\cat{J}\times\cat{K}\to \cat{C}$ induces functors $\cat{J} \to \cat{C}$ and $\cat{K} \to \cat{C}$ defined by $j\mapsto \dlim^\Psi_k D(j,k)$ and $k\mapsto \dlim^\Omega_j D(j,k)$. Since limits commute with limits, there exists a canonical isomorphism $\dlim^\Omega_j \dlim^\Psi_k D(j,k) \simeq \dlim^\Psi_k\dlim^\Omega_j D(j,k)$ between the two limits of $D$. In keeping with dagger category theory, we would like this canonical isomorphism to be unitary. Moreover, we would like both sides to be dagger limits of $(D,\Omega\times\Psi)$. We now prove that this holds whenever \cat{J} and \cat{K} are based and \cat{C} has zero morphisms. Later we will relax this to arbitrary \cat{J} and \cat{K} under a technical condition on the bifunctor $D$, that we conjecture is in fact not necessary.

\begin{theorem}\label{thm:limscommutewithlims} 
  Let $\Omega$ and $\Psi$ be bases for \cat{J} and \cat{K}. Assume that \cat{C} has zero morphisms, all $(\cat{J},\Omega)$-shaped dagger limits, all $(\cat{K},\Psi)$-shaped dagger limits, and let $D\colon \cat{J}\times\cat{K}\to\cat{C}$ be a bifunctor. Then $\dlim^\Psi_k\dlim^\Omega_j D(j,k)$ and $\dlim^\Omega_j\dlim^\Psi_k D(j,k)$ are both dagger limits of $(D,\Omega\times\Psi)$ and hence unitarily isomorphic.
\end{theorem}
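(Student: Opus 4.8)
The plan is to prove that $L:=\dlim^\Psi_k\dlim^\Omega_j D(j,k)$ is a dagger limit of $(D,\Omega\times\Psi)$; applying the same argument with the two variables interchanged (and $D$ transposed) then shows that $M:=\dlim^\Omega_j\dlim^\Psi_k D(j,k)$ is also a dagger limit of $(D,\Omega\times\Psi)$, and since ordinary limits commute with ordinary limits, $M$ is in particular a limit of $D$, so Theorem~\ref{thm:daglimsunique} forces the canonical isomorphism of limits $L\to M$ to be unitary. So fix notation: write $L_k=\dlim^\Omega_j D(j,k)$ with limiting cone $l^k_C\colon L_k\to D(C,k)$, assembled (choosing a dagger limit for each $k$) into a functor $\mathcal{L}\colon\cat{K}\to\cat{C}$, and write $m_{C'}\colon L\to L_{C'}$ for the limiting cone of $L=\dlim^\Psi_{k}\mathcal{L}(k)$. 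Since limits commute with limits, $L$ is a limit of $D$ with limiting cone $d_{(C,C')}:=l^{C'}_C\circ m_{C'}$, and $\Omega\times\Psi$ is a basis for $\cat{J}\times\cat{K}$ (the unique basis element from which $(B,B')$ is reachable, i.e.\ admits a morphism to it, is $(A,A')$ where $A$ and $A'$ are the unique basis elements from which $B$ and $B'$ are reachable). Hence by Proposition~\ref{prop:limswithzeros} it suffices to show (a) $d_{(A,A')}$ is a partial isometry whenever $(A,A')\in\Omega\times\Psi$, and (b) $d_{(B,B')}d_{(A,A')}^\dag=0$ for distinct $(A,A'),(B,B')\in\Omega\times\Psi$.

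Both (a) and (b) are obtained by feeding an adjointable natural transformation into Lemma~\ref{lem:connectingmaps}. For (b), fix the distinct pair and define $\sigma\colon D\Rightarrow D$ by letting $\sigma_{(C,C')}$ be $\id$ if $(C,C')$ is reachable from $(A,A')$ and $0$ otherwise. Because $\Omega$ and $\Psi$ are bases, whenever $(C,C')$ is not reachable from $(A,A')$ neither is any object admitting a morphism from $(C,C')$ (otherwise uniqueness in the bases, applied separately in $\cat{J}$ and $\cat{K}$, would make $(C,C')$ reachable after all), so $\sigma$ is natural; its components being self-adjoint, $\sigma$ is adjointable. Lemma~\ref{lem:connectingmaps} applied to $\sigma\colon D\Rightarrow D$ with both dagger limits equal to $L$ gives $\sigma_{(B,B')}\,d_{(B,B')}\,d_{(A,A')}^\dag=d_{(B,B')}\,d_{(A,A')}^\dag\,\sigma_{(A,A')}$. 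Here $\sigma_{(A,A')}=\id$, while $\sigma_{(B,B')}=0$ since a basis element distinct from $(A,A')$ cannot be reachable from it, so $d_{(B,B')}d_{(A,A')}^\dag=0$.

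For (a), note $d_{(A,A')}=l^{A'}_A\circ m_{A'}$ is a composite of two partial isometries ($l^{A'}_A$ by normalization of $L_{A'}$ since $A\in\Omega$, and $m_{A'}$ by normalization of $L$ since $A'\in\Psi$), so by Remark~\ref{rem:pisetc} it is a partial isometry as soon as the projections $(l^{A'}_A)^\dag l^{A'}_A$ and $m_{A'}m_{A'}^\dag$ on $L_{A'}$ commute. To get this, fix $A\in\Omega$ and consider $\sigma_k:=(l^k_A)^\dag l^k_A\colon L_k\to L_k$. This is natural $\mathcal{L}\Rightarrow\mathcal{L}$: since the legs $l^{k'}_j$ are jointly monic, naturality may be checked after postcomposing with each $l^{k'}_j$, and both $l^{k'}_j\mathcal{L}(f)\sigma_k$ and $l^{k'}_j\sigma_{k'}\mathcal{L}(f)$ equal $D(j,f)l^k_j$ when $j$ is reachable from $A$ (using that $l^k_A$ and $l^{k'}_A$ are partial isometries, so $l^k_j(l^k_A)^\dag l^k_A=l^k_j$ and similarly for $k'$) and both vanish otherwise (by Proposition~\ref{prop:limswithzeros} applied to the $\cat{J}$-limits $L_k$, which gives $l^k_{A''}(l^k_A)^\dag=0$ for $A''\neq A$ in $\Omega$). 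Its components are projections, hence self-adjoint, so $\sigma$ is adjointable. Applying Lemma~\ref{lem:connectingmaps} to $\sigma\colon\mathcal{L}\Rightarrow\mathcal{L}$ with both dagger limits equal to $L$ and taking source and target index both equal to $A'\in\Psi$ yields $\sigma_{A'}\,m_{A'}m_{A'}^\dag=m_{A'}m_{A'}^\dag\,\sigma_{A'}$, which is exactly the required commutation. This establishes (a) and hence the theorem.

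The main obstacle is (a). The naive move — composing the two limit legs as partial isometries via Remark~\ref{rem:pisetc} — demands precisely the commutation of the "summand projection" $(l^{A'}_A)^\dag l^{A'}_A$ on the row limit $L_{A'}$ with the projection $m_{A'}m_{A'}^\dag$ coming from the outer $\cat{K}$-limit, and this is not apparent locally. The device that unlocks it is to recognize $k\mapsto(l^k_A)^\dag l^k_A$ as an adjointable endo-transformation of the row-limit functor $\mathcal{L}$, so that Lemma~\ref{lem:connectingmaps} applies; the fiddly point is verifying naturality of that transformation, which, in the absence of enrichment in commutative monoids, cannot proceed by writing $\id[L_k]$ as a sum of summand projections and must instead be done against the jointly monic limit legs, invoking Proposition~\ref{prop:limswithzeros} in the $\cat{J}$-direction.
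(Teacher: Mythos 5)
Your overall strategy matches the paper's: show that $L=\dlim^\Psi_k\dlim^\Omega_j D(j,k)$ is a dagger limit of $(D,\Omega\times\Psi)$ via the characterization of Proposition~\ref{prop:limswithzeros} (noting that $\Omega\times\Psi$ is a basis for $\cat{J}\times\cat{K}$), then conclude by symmetry and Theorem~\ref{thm:daglimsunique}. Your normalization step (a) is correct: recognizing $k\mapsto (l^k_A)^\dag l^k_A$ as an adjointable endotransformation of the row-limit functor $\mathcal{L}$ and feeding it to Lemma~\ref{lem:connectingmaps} --- which is legitimate there because $L$ genuinely \emph{is} a dagger limit of $(\mathcal{L},\Psi)$ by hypothesis --- gives the required commutation of $(l^{A'}_A)^\dag l^{A'}_A$ with $m_{A'}m_{A'}^\dag$; the paper gets the same commutation by an explicit cone construction instead, but your route is sound (and your naturality check against the jointly monic legs, using Proposition~\ref{prop:limswithzeros} in the $\cat{J}$-direction, is correct).

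Step (b), however, is circular. Lemma~\ref{lem:connectingmaps} requires its two cones to already be \emph{dagger} limits of the relevant diagrams over the relevant weakly initial class: its proof uses both normalization and independence of those cones (region (vi) there is exactly commutativity of the projections $m_A^\dag m_A$ and $m_B^\dag m_B$). You apply it to $\sigma\colon D\Rightarrow D$ ``with both dagger limits equal to $L$'', i.e.\ you treat $(L,d_{(C,C')})$ as a dagger limit of $(D,\Omega\times\Psi)$ --- but that is precisely what you are in the middle of proving, and (b) is its independence half. The orthogonality $d_{(B,B')}d_{(A,A')}^\dag=0$ must instead be extracted from the two layers separately, where the hypotheses genuinely hold: if $A'\neq B'$ then $m_{B'}m_{A'}^\dag=0$ because $L$ is a dagger limit of $(\mathcal{L},\Psi)$ and $\Psi$ is a basis; if $A'=B'$ and $A\neq B$, write $(l^{A'}_A)^\dag=(l^{A'}_A)^\dag l^{A'}_A(l^{A'}_A)^\dag$, use the commutation established in (a) to move the projection $(l^{A'}_A)^\dag l^{A'}_A$ past $m_{A'}m_{A'}^\dag$, and then $l^{A'}_B(l^{A'}_A)^\dag=0$ (from $L_{A'}$ being a dagger limit of $(D(-,A'),\Omega)$) annihilates the composite. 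This is how the paper closes the argument; with that replacement your proof goes through.
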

\begin{proof} 
  By symmetry, it suffices to prove the claim for $\dlim^\Psi_k\dlim^\Omega_j D(j,k)$. Since ordinary limits commute with limits, $\dlim^\Psi_k\dlim^\Omega_j D(j,k)$ is a limit of $D$. Hence we need only check normalization and independence for $(j,k)\in\Omega\times\Psi$. In the presence of zero morphisms, we can use the simpler description from Proposition~\ref{prop:limswithzeros}.

  For normalization, we start by showing that, for fixed $(j,k)\in\Omega\times \Psi$, the morphism $p_k$ defined as the composition of canonical morphisms
  \[
    \dlim^\Omega_j D(j,k)\to\dlim^\Psi_k\dlim^\Omega_j D(j,k)\to\dlim^\Omega_j D(j,k)\to D(j,k)\to \dlim^\Omega_j D(j,k)
  \]
  factors through the canonical morphism $\dlim^\Psi_k\dlim^\Omega_j D(j,k)\to \dlim^\Omega_j D(j,k)$. This is done by extending it to a cone on $\dlim^\Omega_j D(j,k)$ for the functor $\dlim^\Omega_j D(j,-)\colon\cat{K}\to\cat{C}$. For $h$ in $\cat{K}$, define $p_h\colon \dlim^\Omega_j D(j,k)\to \dlim^\Omega_j D(j,h)$ by 
  \[
    p_h = \begin{cases}
      \dlim^\Omega_j D(j,f)p_k & \text{if }\cat{K}(k,h)\neq\emptyset\text, \\
      0 & \text{otherwise.}
    \end{cases}
  \]
  To see that this defines a cone, it suffices to check that $p_h$ is independent of the choice of $f\colon k\to h$. By the universal property of $\dlim^\Omega_j D(j,h)$, we may postcompose with a projection to an arbitrary $D(i,h)$ with $i\in\Omega$ and show that the resulting morphism does not depend on the choice of $f$. This splits into two cases depending on whether $i\neq j$ or not. If $i\neq j$, then the end result is always zero, since the following diagram commutes for every $f$:
  \[\begin{tikzpicture}
    \matrix (m) [matrix of math nodes,row sep=2em,column sep=6em,minimum width=2em]
    {
     &\dlim^\Omega_j D(j,k)& \dlim^\Omega_j D(j,h) \\
    D(j,k) & D(i,k) & D(i,h) \\
    & &  D(i,h) \\};
    \path[->]
    (m-1-2) edge node [left] {$$} (m-2-2)
           edge node [above] {$\dlim^\Omega_j D(j,f)$} (m-1-3)
    (m-1-3) edge node [right] {$$} (m-2-3)
    (m-2-2) edge node [below] {$D(i,f)$} (m-2-3)
    (m-2-3) edge node [right] {$\id$} (m-3-3)
    (m-2-1) edge[out=90,in=180,looseness=.5] node [above] {$$} (m-1-2)
            edge node [above] {$0$} (m-2-2)
            edge[out=-90,in=180,looseness=.4] node [below] {$0$} (m-3-3);       
  \end{tikzpicture}\]
  In case $i=j$, we will prove that the following diagram commutes:
  \[\begin{tikzpicture}
    \matrix (m) [matrix of math nodes,row sep=2em,column sep=2em,minimum width=2em]
    {
      \dlim^\Omega_j D(j,k) & \dlim^\Psi_k \dlim^\Omega_j D(j,k) & \dlim^\Omega_j D(j,k) & D(j,k)\\
     && D(j,k) & \dlim^\Omega_j D(j,k) \\
     & \dlim^\Omega_j D(j,h) &D(j,h)&  \dlim^\Omega_j D(j,h)\\};
    \path[->]
    (m-1-1) edge node [above] {$$} (m-1-2)
    (m-1-2) edge node [above] {$$} (m-1-3)
    (m-1-3) edge node [above] {$$} (m-1-4)
            edge[out=-160,in=90] node [left=2mm] {$\dlim^\Omega_j D(j,f)$} (m-3-2)
            edge node [left] {$$} (m-2-3)
    (m-1-4) edge node [above] {$$} (m-2-4)
    (m-3-2) edge node [below] {$$} (m-3-3)
    (m-2-3) edge node [right] {$D(j,f)$} (m-3-3)
    (m-2-4) edge node [below] {$$} (m-2-3)
             edge node [right] {$\dlim^\Omega_j D(j,f)$} (m-3-4)
    (m-3-4) edge node [below] {$$} (m-3-3);
  \end{tikzpicture}\]
  The top right square commutes because $j\in\Omega$, and the rest of the diagram commutes by definition of $ \dlim^\Omega_j D(j,f)$. The path along the top is $p_h$ followed by a projection to $D(j,h)$, whereas the other path is independent of the choice of $f\colon k\to h$ since $\dlim^\Psi_k \dlim^\Omega_j D(j,k)$ is a cone for $ \dlim^\Omega_j D(j,-)$. Thus \[p_h \colon \dlim^\Omega_j D(j,k) \to \dlim^\Omega_j D(j,h)\] forms a cone. Hence it factors through $\dlim^\Psi_k\dlim^\Omega_j D(j,k)$. In particular, $p_k$ factors through $\dlim^\Psi_k\dlim^\Omega_j D(j,k)\to \dlim^\Omega_j D(j,k)$. By Remark~\ref{rem:pisetc} this implies that the following diagram commutes: 
    \[\begin{tikzpicture}[font=\footnotesize]
       \matrix (m) [matrix of math nodes,row sep=2em,column sep=1.25em,minimum width=2em]
       {
        \dlim^\Omega_j D(j,k) & \dlim^\Psi_k\dlim^\Omega_j D(j,k) & \dlim^\Omega_j D(j,k) & D(j,k) & \dlim^\Omega_j D(j,k)\\
        \dlim^\Psi_k\dlim^\Omega_j D(j,k)& \dlim^\Omega_j D(j,k) & D(j,k) & \dlim^\Omega_j D(j,k) & \dlim^\Psi_k\dlim^\Omega_j D(j,k) \\};
       \path[->]
       (m-1-1) edge node [above] {$$} (m-1-2)
               edge node [left] {$$} (m-2-1)
       (m-1-2) edge node [above] {$$} (m-1-3)
       (m-1-3) edge node [above] {$$} (m-1-4)
       (m-1-4) edge node [above] {$$} (m-1-5)
       (m-2-1) edge node [below] {$$} (m-2-2)
       (m-2-2) edge node [below] {$$} (m-2-3)
       (m-2-3) edge node [below] {$$} (m-2-4)
      (m-2-4) edge node [below] {$$} (m-2-5)
      (m-2-5) edge node [right] {$$} (m-1-5);
    \end{tikzpicture}\]
    As the path along the bottom is self-adjoint, so is the top path. So the projections \[\dlim^\Omega_j D(j,k)\to \dlim^\Psi_k\dlim^\Omega_j D(j,k)\to \dlim^\Omega_j D(j,k)\] and \[\dlim^\Omega_j D(j,k)\to D(j,k)\to \dlim^\Omega_j D(j,k)\] commute. Finally, Remark~\ref{rem:pisetc}  guarantees that the composite $\dlim^\Psi_k\dlim^\Omega_j D(j,k)\to \dlim^\Omega_j D(j,k)\to D(j,k)$ is a partial isometry.

    For independence, it suffices to show for $(j,k),(i,h)\in\Omega\times\Psi$ that the composite $D(j,k)\to \dlim^\Omega_j D(j,k)\to \dlim^\Psi_k\dlim^\Omega_j D(j,k)\to \dlim^\Omega_j D(j,h)\to D(i,h)$ is zero when $(j,k)\neq (i,h)$. If $k\neq h$ this follows from the fact that $\dlim^\Omega_j D(j,k)\to \dlim^\Psi_k\dlim^\Omega_j D(j,k)\to \dlim^\Omega_j D(j,h)$ is zero. Hence we may assume that $h=k$ and consider the case $i\neq j$. As above, the projections  \[\dlim^\Omega_j D(j,k)\to \dlim^\Psi_k\dlim^\Omega_j D(j,k)\to \dlim^\Omega_j D(j,k)\] and \[\dlim^\Omega_j D(j,k)\to D(j,k)\to \dlim^\Omega_j D(j,k)\] commute. Hence the following diagram commutes:
    \[\begin{tikzpicture}
      \matrix (m) [matrix of math nodes,row sep=2em,column sep=3em,minimum width=2em]
      {
       D(j,k)& \dlim^\Omega_j D(j,k) & \dlim^\Psi_k\dlim^\Omega_j D(j,k) & \dlim^\Omega_j D(j,k) \\
       \dlim^\Omega_j D(j,k) & D(j,k)& D(j,k)& D(i,k) \\
       \dlim^\Psi_k\dlim^\Omega_j D(j,k) & \dlim^\Omega_j D(j,k) \\};
      \path[->]
      (m-1-1) edge node [left] {$$} (m-2-1)
             edge node [above] {$$} (m-1-2)
      (m-1-2) edge node [above] {$$} (m-1-3)
      (m-1-3) edge node [above] {$$} (m-1-4)
      (m-1-4) edge node [right] {$$} (m-2-4)
      (m-2-1) edge node [below] {$$} (m-2-2)
              edge node [left] {$$} (m-3-1)
      (m-2-2) edge node [right] {$$} (m-1-2)
      (m-2-3) edge node [below] {$0$} (m-2-4)
              edge node [above] {$$} (m-1-4)
      (m-3-1) edge node [below] {$$} (m-3-2)
      (m-3-2) edge node [below] {$$} (m-2-3);        
    \end{tikzpicture}\]
    This concludes the proof.
\end{proof}

Next, consider whether dagger limits commute with dagger colimits.
If \cat{C} has all $(\cat{J},\Omega)$-shaped dagger limits and all $(\cat{K},\Psi)$-shaped dagger \emph{co}limits (\ie $(\cat{K}\op,\Psi)$-shaped dagger limits), it is natural to ask when the canonical morphism 
\[\tau:\dcolim^\Psi_k\dlim^\Omega_j D(j,k)\to\dlim^\Omega_j\dcolim^\Psi_k D(j,k)\]
 is unitary. This canonical morphism $\tau$ and morphisms $\alpha_k$ are defined by making the following diagram commute for each $k$: 
\[\begin{tikzpicture}
  \matrix (m) [matrix of math nodes,row sep=2em,column sep=4em,minimum width=2em]
  {
   \dcolim^\Psi_k\dlim^\Omega_j D(j,k) &  \dlim^\Omega_j D(j,k)  & D(j,k) \\
   & \dlim^\Omega_j\dcolim^\Psi_k D(j,k)  & \dcolim^\Psi_k D(j,k) \\};
  \path[->]
  (m-1-1) edge[dashed] node [below] {$\tau$} (m-2-2)
  (m-1-2) edge node [above] {$$} (m-1-1)
  (m-1-2) edge node [above] {$$} (m-1-3)
  (m-1-3) edge node [right] {$$} (m-2-3)
  (m-1-2) edge[dashed] node [right] {$\alpha_k$} (m-2-2)
  (m-2-2) edge node [below] {$$} (m-2-3);
\end{tikzpicture}\]
A priori one might hope $\tau$ to be unitary very generally. After all, $(\cat{K},\Psi)$-shaped dagger colimits are just $(\cat{K}\op,\Psi)$-shaped dagger limits, so one might expect that commutativity of limits with colimits boils down to commutativity of limits with limits. To be slightly more precise, given $D\colon \cat{J}\times\cat{K}\to\cat{C}$, one would like to define $\hat{D}\colon \cat{J}\times\cat{K}\op\to\cat{C}$ by ``applying the dagger to the second variable'' and then calculating as follows:
\begin{align*}
  &\dcolim^\Psi_k\dlim^\Omega_j D(j,k)=\dlim^\Psi_k\dlim^\Omega_j \hat{D}(j,k) \\
  &\simeq_\dag \dlim^\Omega_j\dlim^\Psi_k \hat{D}(j,k)=\dlim^\Omega_j\dcolim^\Psi_k D(j,k)
\end{align*}
This, however, is a trap: $\hat{D}$ is not guaranteed to be a bifunctor. Indeed, the formula 
$
  \hat{D}(f,g)=D(f,\id)D(g,\id)^\dag
$
defines a bifunctor $\cat{J}\times\cat{K}\op\to\cat{C}$ if and only if every morphism $(f,g) \colon (j,k)\to (i,h)$ in $\cat{J}\times\cat{K}$ makes the following diagram commute:
\[\begin{tikzpicture}
  \matrix (m) [matrix of math nodes,row sep=2em,column sep=7em]
  {
   D(j,h)& D(i,k) \\
   D(j,k) & D(i,k) \\};
  \path[->]
  (m-1-1) edge node [left] {$D(\id,g)^\dag$} (m-2-1)
         edge node [above] {$D(f,\id)$} (m-1-2)
  (m-1-2) edge node [right] {$D(\id,g)^\dag$} (m-2-2)
  (m-2-1) edge node [below] {$D(f,\id) $} (m-2-2);       
\end{tikzpicture}\]

\begin{definition}
  A bifunctor $D \colon \cat{J} \times \cat{K} \to \cat{C}$ into a dagger category $\cat{C}$ is \emph{adjointable} when $D(-,g) \colon D(-,k) \Rightarrow D(-,h)$ is an adjointable natural transformation for each $g \colon k \to h$ in $\cat{K}$.
  Equivalently, $D$ is adjointable when $D(f,-)$ is an adjointable natural transformation for each morphism $f$ in $\cat{J}$.
\end{definition}

Let us temporarily go back to considering whether dagger limits commute with dagger limits. The extra condition of adjointability of $D$ lets us prove that this is true for arbitrary shapes $\cat{J}$ and $\cat{K}$. We conjecture that the extra condition is in fact not needed.

\begin{theorem}\label{thm:limscommutewithlims2} 
  Assume that \cat{C} has all $(\cat{J},\Omega)$-shaped dagger limits, all $(\cat{K},\Psi)$-shaped dagger limits, and let $D\colon \cat{J}\times\cat{K}\to\cat{C}$ be an adjointable bifunctor. Then $\dlim^\Psi_k\dlim^\Omega_j D(j,k)$ and $\dlim^\Omega_j\dlim^\Psi_k D(j,k)$ are both dagger limits of $(D,\Omega\times\Psi)$ and hence unitarily isomorphic.
\end{theorem}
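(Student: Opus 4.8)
The plan is to exploit the symmetry between $(\cat{J},\Omega)$ and $(\cat{K},\Psi)$ and prove that $\mathbb{L}:=\dlim^\Psi_k\dlim^\Omega_j D(j,k)$ is a dagger limit of $(D,\Omega\times\Psi)$, where $\Omega\times\Psi$ is weakly initial in $\cat{J}\times\cat{K}$; the same argument then applies to $\dlim^\Omega_j\dlim^\Psi_k D(j,k)$, and Theorem~\ref{thm:daglimsunique} supplies the unitary isomorphism. Since ordinary limits commute with limits, $\mathbb{L}$ is already an ordinary limit of $D$, so only normalization and independence at pairs $(j,k)\in\Omega\times\Psi$ remain to be checked. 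I would set up notation as follows: write $L_k=\dlim^\Omega_j D(j,k)$ with inner cone $a_{j,k}\colon L_k\to D(j,k)$, let $L_{(-)}\colon\cat{K}\to\cat{C}$ be the functor $k\mapsto L_k$ with $L(g)\colon L_k\to L_h$ the map of $\Omega$-limits induced by $D(-,g)$ for $g\colon k\to h$, write $b_k\colon\mathbb{L}\to L_k$ for the outer cone, so that the cone of $\mathbb{L}$ over $D$ is $c_{j,k}:=a_{j,k}b_k$, and put $B_k:=b_k^\dag b_k$.

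The technical heart is to manufacture, for each $j\in\Omega$, a single ``normalization projection'' $R_j$ on $\mathbb{L}$. First I would note that adjointability of $D$ makes the cone $a_{j,-}\colon L_{(-)}\Rightarrow D(j,-)$ adjointable: it is natural by construction of $L(g)$, and Corollary~\ref{cor:mapoflimsismapofcolims} applied to the adjointable transformation $D(-,g)$ shows the transition map $L(g)$ is simultaneously a map of colimits, i.e.\ $L(g)a_{j,k}^\dag=a_{j,h}^\dag D(j,g)$, which is precisely adjointability of $a_{j,-}$. Hence $\alpha^{(j)}_k:=a_{j,k}^\dag a_{j,k}$ assembles into a pointwise-self-adjoint, hence adjointable, endomorphism $\alpha^{(j)}\colon L_{(-)}\Rightarrow L_{(-)}$ (naturality uses both the limit- and colimit-naturality of $a_{j,-}$). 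Let $R_j\colon\mathbb{L}\to\mathbb{L}$ be the map of limits induced by $\alpha^{(j)}$; by Corollary~\ref{cor:mapoflimsismapofcolims} it is also the induced map of colimits, so $b_kR_j=\alpha^{(j)}_k b_k$ and $R_jb_k^\dag=b_k^\dag\alpha^{(j)}_k$ for every $k$. These identities force $R_j$ to be self-adjoint, and since $\alpha^{(j)}$ is pointwise idempotent $R_j$ is idempotent, so $R_j$ is a projection. Crucially they also give
\[
  c_{j,k}^\dag c_{j,k}=b_k^\dag\alpha^{(j)}_k b_k=B_kR_j=R_jB_k,
\]
so each leg's projection factors as a product of the commuting projections $R_j$ and $B_k$.

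Normalization then follows from Remark~\ref{rem:pisetc}: for $j\in\Omega$ the map $a_{j,k}$ is a partial isometry (normalization of $L_k$), for $k\in\Psi$ the map $b_k$ is a partial isometry (normalization of $\mathbb{L}$), and $\alpha^{(j)}_k=a_{j,k}^\dag a_{j,k}$ commutes with $b_kb_k^\dag$ (immediate from $\alpha^{(j)}_kb_k=b_kR_j$ and $R_jb_k^\dag=b_k^\dag\alpha^{(j)}_k$), so $c_{j,k}=a_{j,k}b_k$ is a partial isometry. For independence I would observe that the family $\{R_j:j\in\Omega\}\cup\{B_k:k\in\Psi\}$ consists of pairwise-commuting projections: the $B_k$ commute among themselves by independence of the outer dagger limit, $R_j$ commutes with $B_k$ by the displayed factorization, and the $R_j$ commute among themselves because $R_jR_i=\dlim^\Psi(\alpha^{(j)}\circ\alpha^{(i)})$ by functoriality of limits, while $\alpha^{(j)}_k\alpha^{(i)}_k=\alpha^{(i)}_k\alpha^{(j)}_k$ for all $k$ and all $i,j\in\Omega$ by independence of the inner dagger limit $L_k$. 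Since $c_{j,k}^\dag c_{j,k}=R_jB_k$ is a product of members of this commuting family, the projections $c_{j,k}^\dag c_{j,k}$ for $(j,k)\in\Omega\times\Psi$ pairwise commute, which is independence.

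I expect the main obstacle to be the packaging rather than any single computation: a direct attempt to commute the composite projections $c_{j,k}^\dag c_{j,k}$ using only Lemma~\ref{lem:connectingmaps} for the $\alpha^{(j)}$ reduces the claim for a pair $(k,h)$ to the same claim for the swapped pair $(h,k)$ and never bottoms out, and the move that breaks this loop is precisely passing to the $\dlim^\Psi$-image $R_j$, which is independent of $k$ and therefore commutes with every $B_h$ at once. The only place the adjointability hypothesis on $D$ is genuinely used is in upgrading the cones $a_{j,-}$ to adjointable natural transformations via Corollary~\ref{cor:mapoflimsismapofcolims}; without it neither $\alpha^{(j)}$ nor $R_j$ would be available, which is exactly why we conjecture but cannot yet remove the hypothesis. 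The remaining verifications (naturality of $\alpha^{(j)}$, self-adjointness and idempotence of $R_j$, and the displayed factorization) are short diagram chases.
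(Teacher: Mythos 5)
Your proposal is correct, and its first half coincides with the paper's: you construct exactly the same pointwise self-adjoint natural endotransformation of $\dlim^\Omega_j D(j,-)$ (your $\alpha^{(j)}$ is the paper's $\sigma$), establish its naturality from adjointability of $D$ via Corollary~\ref{cor:mapoflimsismapofcolims}, and prove normalization by the same commutation-of-projections argument feeding into Remark~\ref{rem:pisetc}. Where you genuinely diverge is in the independence step. The paper applies Lemma~\ref{lem:connectingmaps} to $\sigma$ to obtain the intertwining identity $b_hb_k^\dag\alpha^{(j)}_k=\alpha^{(j)}_hb_hb_k^\dag$ and then verifies independence by a fourteen-region diagram chase (Figure~\ref{fig:independence}). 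You instead push $\alpha^{(j)}$ through the outer limit to get a single projection $R_j$ on $\dlim^\Psi_k\dlim^\Omega_j D(j,k)$, use Corollary~\ref{cor:mapoflimsismapofcolims} to see it is simultaneously the induced map of colimits (whence $b_kR_j=\alpha^{(j)}_kb_k$ and $R_jb_k^\dag=b_k^\dag\alpha^{(j)}_k$, which together recover the paper's identity $(\star\star)$), and then observe that each leg's projection factors as $c_{j,k}^\dag c_{j,k}=R_jB_k$ with $\{R_j\}_{j\in\Omega}\cup\{B_k\}_{k\in\Psi}$ a pairwise-commuting family of projections: the $B_k$ commute by independence of the outer limit, the $R_j$ commute because they are induced by the pointwise-commuting $\alpha^{(j)}$ (independence of the inner limits plus joint monicity of the $b_k$), and $R_j$ commutes with $B_k$ by the two intertwining identities. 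This buys a short algebraic verification in place of the large diagram, and makes transparent exactly where each of the four dagger-limit axioms (normalization and independence, inner and outer) is used; the ingredients (Lemma~\ref{lem:connectingmaps} and its corollary) are the same, so neither proof is more general than the other.
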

\begin{proof} 
  By symmetry, it suffices to prove the claim for $\dlim^\Psi_k\dlim^\Omega_j D(j,k)$. Since ordinary limits commute with limits, $\dlim^\Psi_k\dlim^\Omega_j D(j,k)$ is the limit of $D$. Hence we only need to check normalization and independence for $(j,k)\in\Omega\times\Psi$. Consider the following diagram for some morphism $f \colon k \to h$ in $\cat{K}$:
  \[\begin{tikzpicture}
    \matrix (m) [matrix of math nodes,row sep=1em,column sep=6em,minimum width=2em]
    {
    \dlim^\Omega_j D(j,k)& \dlim^\Omega_j D(j,h) \\
     D(j,k) & D(j,h) \\
     \dlim^\Omega_j D(j,k)& \dlim^\Omega_j D(j,h)\\};
    \path[->]
    (m-1-1) edge node [left] {$$} (m-2-1)
           edge node [above] {$\dlim^\Omega_j D(j,f)$} (m-1-2)
    (m-1-2) edge node [right] {$$} (m-2-2)
    (m-2-1) edge node [below] {$D(\id,f)$} (m-2-2)
            edge node [left] {$$} (m-3-1)
     (m-2-2) edge node [right] {$$} (m-3-2)
     (m-3-1) edge node [below] {$\dlim^\Omega_j D(j,f)$} (m-3-2);       
  \end{tikzpicture}\]
  The top square commutes by definition of $\dlim^\Omega_j D(j,f)$. Since $D(-,f)$ is adjointable, the bottom square commutes too by Corollary~\ref{cor:mapoflimsismapofcolims}. Hence the whole diagram commutes, and the family $\sigma_k\colon \dlim^\Omega_j D(j,k)\to D(j,k)\to \dlim^\Omega_j D(j,k)$ is natural in $k$. Since $\sigma\colon \dlim^\Omega_j D(j,-)\to \dlim^\Omega_j D(j,-)$ is natural and pointwise self-adjoint, it is an adjointable natural transformation. Lemma~\ref{lem:connectingmaps} now makes the following diagram commute for each $j \in \Omega$ and $h,k \in \Psi$:
  \begin{equation*}\label{diag:commutinglimswithlims}\tag{$\star\star$}
    \begin{aligned}\begin{tikzpicture}
         \matrix (m) [matrix of math nodes,row sep=2em,column sep=1.5em,minimum width=2em]
         {
          \dlim^\Omega_j D(j,k) & D(j,k) & \dlim^\Omega_j D(j,k) & \dlim^\Psi_k\dlim^\Omega_j D(j,k)\\
          \dlim^\Psi_k\dlim^\Omega_j D(j,k)& \dlim^\Omega_j D(j,h) & D(j,h) & \dlim^\Omega_j D(j,h) \\};
         \path[->]
         (m-1-1) edge node [above] {$$} (m-1-2)
                 edge node [left] {$$} (m-2-1)
         (m-1-2) edge node [above] {$$} (m-1-3)
         (m-1-3) edge node [above] {$$} (m-1-4)
         (m-1-4) edge node [above] {$$} (m-2-4)
         (m-2-1) edge node [below] {$$} (m-2-2)
         (m-2-2) edge node [below] {$$} (m-2-3)
         (m-2-3) edge node [below] {$$} (m-2-4);
    \end{tikzpicture}\end{aligned}
  \end{equation*}
  Choosing $k=h$ shows that the projections $\dlim^\Omega_j D(j,k)\to D(j,k)\to \dlim^\Omega_j D(j,k)$ and $\dlim^\Omega_j D(j,k)\to \dlim^\Psi_k\dlim^\Omega_j D(j,k)\to \dlim^\Omega_j D(j,k)$ commute. Remark~\ref{rem:pisetc} then guarantees that the composite $\dlim^\Psi_k\dlim^\Omega_j D(j,k)\to \dlim^\Omega_j D(j,k)\to D(j,k)$ is a partial isometry for each $(j,k)\in\Omega\times\Psi$, establishing normalization.

  \begin{figure}\centering\begin{sideways}
      \begin{tikzpicture}[xscale=4.5, yscale=1.75, font=\footnotesize]
        \node (a1) at (1,9) {$\dlim^\Psi_k \dlim^\Omega_j D(j,k)$};
        \node (a2) at (2,9) {$\dlim^\Omega_j D(j,k)$};
        \node (a3) at (3,9) {$D(j,k)$};
        \node (a4) at (4,9) {$\dlim^\Omega_j D(j,k)$};
        \node (a5) at (5,9) {$\dlim^\Psi_k \dlim^\Omega_j D(j,k)$};
        \node (b1) at (1,8) {$\dlim^\Omega_j D(j,h)$};
        \node (b2) at (2.45,8) {$\dlim^\Psi_k \dlim^\Omega_j D(j,k)$};
        \node (b3) at (4,8) {$\dlim^\Omega_j D(j,h)$};
        \node (b4) at (4.5,8) {$D(j,h)$};
        \node (b5) at (5,8) {$\dlim^\Omega_j D(j,h)$};
        \node (c1) at (1.5,7) {$\dlim^\Psi_k \dlim^\Omega_j D(j,k)$};
        \node (c2) at (2.7,7.5) {$\dlim^\Omega_j D(j,h)$};
        \node (c3) at (3,7) {$\dlim^\Psi_k \dlim^\Omega_j D(j,k)$};
        \node (c4) at (4,7) {$D(i,h)$};
        \node (c5) at (5,7) {$D(i,h)$};
        \node (d1) at (1,6) {$D(i,h)$};
        \node (d2) at (2,6) {$\dlim^\Omega_j D(j,k)$};
        \node (d3) at (3,5) {$\dlim^\Psi_k \dlim^\Omega_j D(j,k)$};
        \node (d4) at (4,6) {$\dlim^\Omega_j D(j,h)$};
        \node (e1) at (1,5) {$\dlim^\Omega_j D(j,h)$};
        \node (e2) at (2,5) {$D(i,k)$};
        \node (e3) at (2.75,4) {$D(j,k)$};
        \node (e4) at (4.5,5.5) {$D(j,h)$};
        \node (e5) at (5,5) {$\dlim^\Omega_j D(j,h)$};
        \node (f2) at (2,4) {$\dlim^\Omega_j D(j,k)$};
        \node (f3) at (3.5,4) {$\dlim^\Omega_j D(j,k)$};
        \node (f4) at (4.35,4) {$\dlim^\Psi_k\dlim^\Omega_j D(j,k)$};
        \node (g1) at (1.35,4) {$\dlim^\Psi_k \dlim^\Omega_j D(j,k)$};
        \node (g2) at (2,3) {$\dlim^\Psi_k \dlim^\Omega_j D(j,k)$};
        \node (g3) at (3.5,3) {$\dlim^\Psi_k \dlim^\Omega_j D(j,k)$};
        \node (g4) at (4,3.5) {$\dlim^\Omega_j D(j,k)$};
        \node (h1) at (1.35,2) {$\dlim^\Omega_j D(j,h)$};
        \node (h2) at (2,2) {$\dlim^\Omega_j D(j,h)$};
        \node (h3) at (3.5,2) {$\dlim^\Omega_j D(j,h)$};
        \node (h4) at (4,1.5) {$\dlim^\Psi_k \dlim^\Omega_j D(j,k)$};
        \node (i1) at (1,1) {$\dlim^\Psi_k \dlim^\Omega_j D(j,k)$};
        \node (i2) at (2.5,1) {$\dlim^\Psi_k \dlim^\Omega_j D(j,k)$};
        \node (i3) at (2.75,2) {$D(j,h)$};
        \node (i4) at (4.5,0.75) {$\dlim^\Omega_j D(j,k)$};
        \node (j2) at (1.75,1) {$\dlim^\Omega_j D(j,k)$};
        \node (j3) at (3,0) {$\dlim^\Omega_j D(j,k)$};
        \node (j4) at (4,0) {$D(j,k)$};
        \node (j5) at (5,0) {$\dlim^\Psi_k \dlim^\Omega_j D(j,k)$};
        \draw[->] (a1) to (a2);
        \draw[->] (a2) to (a3);
        \draw[->] (a3) to (a4);
        \draw[->] (a4) to (a5);
        \draw[->] (a1) to (b1);
        \draw[->] (a2) to (b2);
        \draw[->] (a5) to (b5);
        \draw[->] (b1) to (c1);
        \draw[->] (b2) to (c2);
        \draw[->] (b2) to (b3);
        \draw[->] (b3) to (b4);
        \draw[->] (b3) to (c4);
        \draw[->] (b4) to (b5);
        \draw[->] (b5) to (c5);
        \draw[->] (b1) to (d1);
        \draw[->] (c1) to (d2);
        \draw[->] (c2) to (c3);
        \draw[->] (c3) to (b3);
        \draw[->] (c4) to (d4);
        \draw[->] (c5) to (e5);
        \draw[->] (d1) to (e1);
        \draw[->] (d2) to (e2);
        \draw[->] (d2) to (c3);
        \draw[->] (d3) to (d4);
        \draw[->] (d4) to (e4);
        \draw[->] (e4) to (e5);
        \draw[->] (e1) to (g1);
        \draw[->] (e2) to (f2);
        \draw[->] (e3) to (f3);
        \draw[->] (e1) to (i1);
        \draw[->] (f2) to (d3);
        \draw[->] (f2) to (e3);
        \draw[->] (f2) to (g2);
        \draw[->] (f3) to (f4);
        \draw[->] (f3) to (g3);
        \draw[->] (f4) to (e5);
        \draw[->] (e5) to (j5);
        \draw[->] (g1) to (f2);
        \draw[->] (g3) to (g4);
        \draw[->] (g4) to (f4);
        \draw[->] (g1) to (h1);
        \draw[->] (g2) to (h2);
        \draw[->] (g3) to (h3);
        \draw[->] (h3) to (h4);
        \draw[->] (h1) to (i1);
        \draw[->] (h2) to (i2);
        \draw[->] (h2) to (i3);
        \draw[->] (i3) to (h3);
        \draw[->] (h4) to (i4);
        \draw[->] (i1) to (j2);
        \draw[->] (j2) to (i2);
        \draw[->] (i2) to (j3);
        \draw[->] (j3) to (j4);
        \draw[->] (j4) to (i4);
        \draw[->] (i4) to (j5);
        \draw[->] (i1) to[out=-90,in=180] (j3);
        \draw[gray,draw=none] (b1) to node{(i)} (b2);
        \draw[gray,draw=none] (a2) to node{(ii)} (b5);
        \draw[gray,draw=none] (c2) to node{(iii)} (b3);
        \draw[gray,draw=none] (b3) to node{(iv)} (e5);
        \draw[gray,draw=none] (d4) to node{(v)} (f4);
        \draw[gray,draw=none] (c3) to node{(vi)} (d3);
        \draw[gray,draw=none] (d1) to node{(vii)} (e2);
        \draw[gray,draw=none] (e1) to node{(viii)} (h1);
        \draw[gray,draw=none] (g1) to node{(ix)} (g2);
        \draw[gray,draw=none] (g2) to node{(x)} (g3);
        \draw[gray,draw=none] (f3) to node{(xi)} (g4);
        \draw[gray,draw=none] (f4) to node{(xii)} (i4);
        \draw[gray,draw=none] (h4) to node{(xiii)} (j3);
        \draw[gray,draw=none] (j3) to node{(xiv)} (i1);
      \end{tikzpicture}\end{sideways}
    \caption{Proof of independence in Theorem~\ref{thm:limscommutewithlims2}.}
    \label{fig:independence}
  \end{figure}
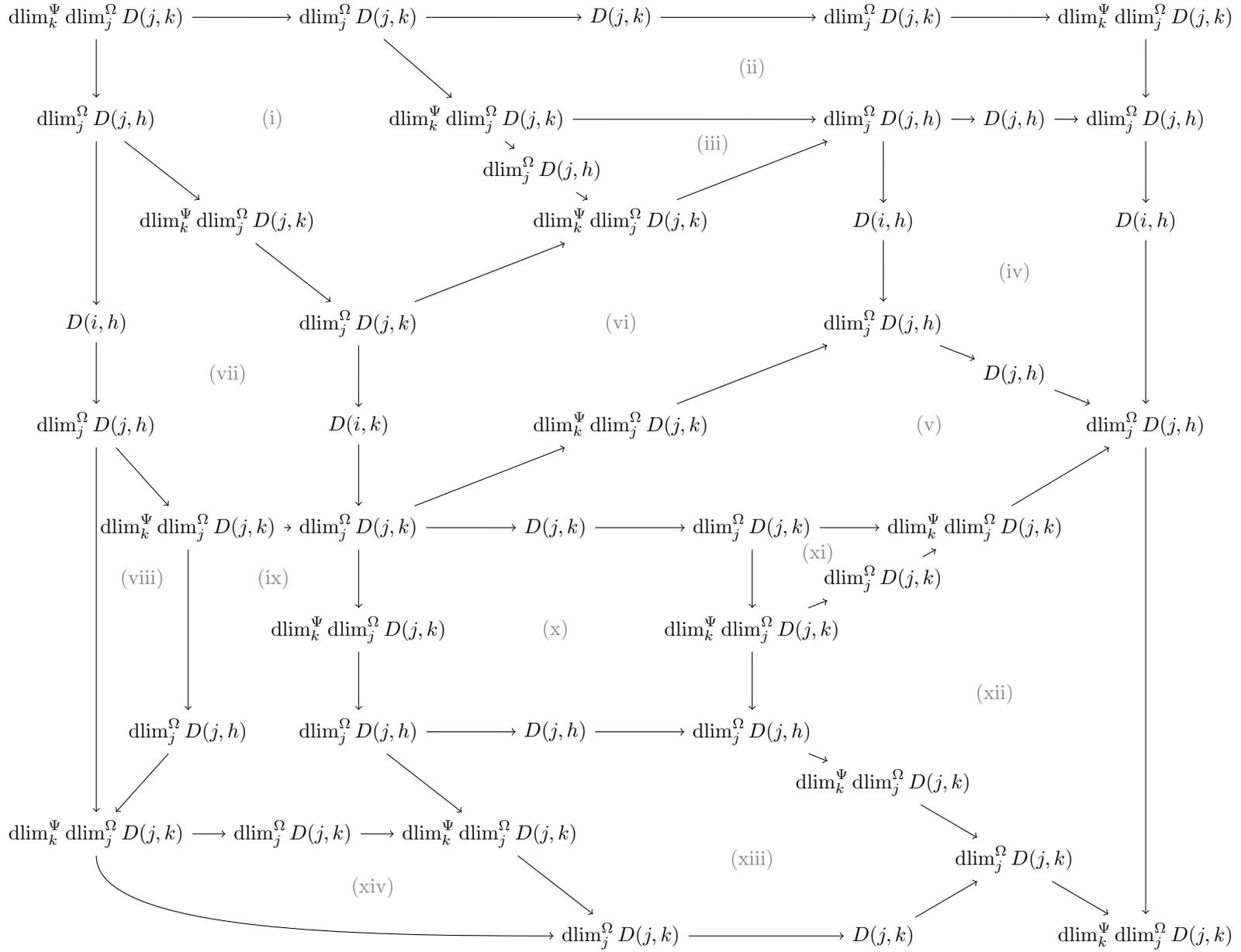

  For independence, pick $(j,k)$ and $(i,h)$ in  $\Omega\times\Psi$. We will show that the diagram in Figure~\ref{fig:independence} commutes. The fact that $k,h\in\Psi$ and $\dlim^\Psi_k\dlim^\Omega_j D(j,k)$ are normalized and independent at $\Psi$ ensures the commutativity of regions (i), (iii), (viii). (xi), (xi),(xii) and (xiv). Similarly, $i,j\in\Omega$ implies the commutativity of region (iv). The remaining regions, namely (ii), (v), (vi), (vii), (x) and (xiii), are all instances of diagram \eqref{diag:commutinglimswithlims}.
\end{proof}

We return to considering whether dagger limits commute with dagger colimits.
If $D$ is not adjointable, the colimit of limits need not be (unitarily) isomorphic to the limit of colimits.

\begin{example}\label{ex:limsneednotcommutewithcolims} 
  Dagger kernels need not commute with dagger cokernels in \cat{FHilb} if the bifunctor $D$ is not adjointable. For a counterexample, let $\cat{J}$ and $\cat{K}$ both be the shape $f,g \colon A \rightrightarrows B$ giving rise to equalizers.
  If $D\colon\cat{J}\times \cat{K}\to \cat{FHilb}$ maps each $D(f,-)$ and $D(-,f)$ to zero, then fixing the rest of $D$ corresponds to a choice of a commuting square in \cat{FHilb}. Let $D$ be thus defined by the square:
  \[\begin{tikzpicture}
    \matrix (m) [matrix of math nodes,row sep=2em,column sep=4em,minimum width=2em]
    {
     0 &\mathbb{C} &  \\
     \mathbb{C} &  \mathbb{C} \\};
    \path[->]
    (m-1-1) edge node [left] {$0$} (m-2-1)
           edge node [above] {$0$} (m-1-2)
    (m-1-2) edge node [right] {$\id $} (m-2-2)
    (m-2-1) edge node [below] {$\id $} (m-2-2);       
  \end{tikzpicture}\]
  Taking daggers of the horizontal arrows gives a square that does not commute, so $D$ is not adjointable. 
  Now, on the one hand, first taking cokernels horizontally and then taking kernels vertically gives $\mathbb{C}$: 
  \[\begin{tikzpicture}
    \matrix (m) [matrix of math nodes,row sep=2em,column sep=4em,minimum width=2em]
    { & & \ker 0=\mathbb{C} \\
     0 &\mathbb{C} & \coker 0=\mathbb{C} \\
     \mathbb{C} &  \mathbb{C} &\coker \id[\mathbb{C}]= 0\\};
    \path[->]
    (m-1-3) edge node [right] {$\id$} (m-2-3)
    (m-2-1) edge node [left] {$0$} (m-3-1)
           edge node [above] {$0$} (m-2-2)
    (m-2-2) edge node [right] {$\id $} (m-3-2)
            edge node [above] {$\id $} (m-2-3)
    (m-2-3) edge node [right] {$0$} (m-3-3)
    (m-3-1) edge node [below] {$\id $} (m-3-2)
    (m-3-2) edge node [below] {$0$} (m-3-3);       
  \end{tikzpicture}\]
  On the other hand, first taking kernels vertically and then taking cokernels horizontally gives $0$:
  \[\begin{tikzpicture}
         \matrix (m) [matrix of math nodes,row sep=2em,column sep=4em,minimum width=2em]
         {\ker 0=0 & \ker \id[\mathbb{C}]=0 &\coker 0=0 \\
          0 &\mathbb{C} \\
          \mathbb{C} &  \mathbb{C} \\};
         \path[->]
         (m-1-1) edge node [left] {$0$} (m-2-1)
                 edge node [above] {$0$} (m-1-2)
         (m-1-2) edge node [right] {$0$} (m-2-2)
                 edge node [above] {$0$} (m-1-3)
         (m-2-1) edge node [left] {$0$} (m-3-1)
                edge node [above] {$0$} (m-2-2)
         (m-2-2) edge node [right] {$\id $} (m-3-2)
         (m-3-1) edge node [below] {$\id $} (m-3-2);       
  \end{tikzpicture}\]
  Thus $\dlim_j\dcolim_k D(j,k)= \mathbb{C}$ is not isomorphic to $\dcolim_k \dlim_j D(j,k) = 0$, let alone unitarily so.
\end{example}

However, $D$ not being adjointable is the only obstruction to dagger limits commuting with dagger colimits.

\begin{theorem}\label{thm:limsandcolimscommute} 
  If \cat{C} has all $(\cat{J},\Omega)$-shaped dagger limits, all $(\cat{K},\Psi)$-shaped dagger colimits, and $D\colon \cat{J}\times\cat{K}\to\cat{C}$ is an adjointable bifunctor, then the canonical morphism 
  $\tau\colon\dcolim^\Psi_k\dlim^\Omega_j D(j,k)\to\dlim^\Omega_j\dcolim^\Psi_k D(j,k)$ is unitary. 
\end{theorem}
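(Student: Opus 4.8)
The plan is to deduce this from Theorem~\ref{thm:limscommutewithlims2} by ``applying the dagger to the second variable''. Since $D$ is adjointable, the discussion preceding the definition of an adjointable bifunctor shows that the assignment $\hat D(j,k):=D(j,k)$, with $\hat D(f,\id)=D(f,\id)$ on $\cat{J}$-morphisms and $\hat D(\id,g\op):=D(\id,g)^\dag$ for $g$ in $\cat{K}$, is a well-defined bifunctor $\hat D\colon\cat{J}\times\cat{K}\op\to\cat{C}$: the only non-formal condition to check is a ``cross'' naturality square, and its commutativity for every morphism is precisely adjointability of $D$. Moreover $\hat D$ is itself an adjointable bifunctor, since for a $\cat{K}\op$-morphism $g\op$ the componentwise dagger of $\hat D(-,g\op)$ is $D(-,g)$, which is natural. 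Finally, the hypotheses of Theorem~\ref{thm:limscommutewithlims2} apply to $\hat D$, because by the self-duality of dagger categories ``$\cat{C}$ has all $(\cat{K},\Psi)$-shaped dagger colimits'' is the same as ``$\cat{C}$ has all $(\cat{K}\op,\Psi)$-shaped dagger limits''.

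Next I would match up the iterated (co)limits. For fixed $k$ one has $\hat D(-,k)=D(-,k)$, hence $\dlim^\Omega_j\hat D(j,k)=\dlim^\Omega_j D(j,k)$ with the same limiting cone, and I claim that the functor $k\mapsto\dlim^\Omega_j\hat D(j,k)$ on $\cat{K}\op$ is the dagger of the functor $k\mapsto\dlim^\Omega_j D(j,k)$ on $\cat{K}$. On morphisms this amounts to the statement that the map of $\cat{J}$-limits induced by $D(-,g)^\dag$ is the dagger of the map induced by $D(-,g)$, which follows from Corollary~\ref{cor:mapoflimsismapofcolims}---applicable because $D(-,g)$, hence $D(-,g)^\dag$, is adjointable---together with uniqueness of mediating maps. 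Therefore the $(\cat{K}\op,\Psi)$-shaped dagger limit of $k\mapsto\dlim^\Omega_j\hat D(j,k)$ is exactly $\dcolim^\Psi_k\dlim^\Omega_j D(j,k)$, and, by the same reasoning applied first in the $k$-variable, the iterated limit $\dlim^\Omega_j$ of the $(\cat{K}\op,\Psi)$-shaped limit of $\hat D$ is $\dlim^\Omega_j\dcolim^\Psi_k D(j,k)$.

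Applying Theorem~\ref{thm:limscommutewithlims2} to the adjointable bifunctor $\hat D\colon\cat{J}\times\cat{K}\op\to\cat{C}$ then gives that both $\dcolim^\Psi_k\dlim^\Omega_j D(j,k)$ and $\dlim^\Omega_j\dcolim^\Psi_k D(j,k)$ are dagger limits of $(\hat D,\Omega\times\Psi)$, and that the canonical comparison isomorphism between them---which, at the level of underlying ordinary (co)limits, is exactly the morphism $\tau$ of the statement---is unitary. (Alternatively, having established that both are dagger limits of $(\hat D,\Omega\times\Psi)$, one may conclude via Theorem~\ref{thm:daglimsunique}.) Hence $\tau$ is unitary.

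I expect the main obstacle to be bookkeeping rather than any single deep step: checking that $\hat D$ is a genuine adjointable bifunctor, that forming $\dlim^\Omega_j$ transports the duality $\cat{K}\leftrightarrow\cat{K}\op$ correctly (this is exactly where one needs the general form of Corollary~\ref{cor:mapoflimsismapofcolims}, and hence of Lemma~\ref{lem:connectingmaps} beyond the $\sigma=\id$ case), and that the comparison isomorphism furnished by Theorem~\ref{thm:limscommutewithlims2} coincides with the concretely defined $\tau$. Keeping all the variances straight is the part most prone to error.
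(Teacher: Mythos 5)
Your overall strategy is the same as the paper's: form $\hat D\colon\cat{J}\times\cat{K}\op\to\cat{C}$ by daggering the second variable, observe that adjointability of $D$ is precisely what makes $\hat D$ a (necessarily adjointable) bifunctor, and feed it to Theorem~\ref{thm:limscommutewithlims2}. That part of your argument is correct and matches the paper, and your observation that the functor $k\mapsto\dlim^\Omega_j\hat D(j,k)$ on $\cat{K}\op$ is the dagger of $k\mapsto\dlim^\Omega_j D(j,k)$ (via Corollary~\ref{cor:mapoflimsismapofcolims}) is a genuine and correctly justified ingredient.

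The gap is the step you defer to ``bookkeeping'': identifying the unitary comparison isomorphism $u$ produced by Theorem~\ref{thm:limscommutewithlims2} with the canonical morphism $\tau$ of the statement. These two morphisms are pinned down by different universal properties. As a morphism of limit cones over $\hat D$, $u$ is characterized by equations of the form $\pi^{(2)}_{j,k}\circ u=\pi^{(1)}_{j,k}$, in which the leg out of $\dcolim^\Psi_k\dlim^\Omega_j D(j,k)$ is the \emph{dagger} of a cocone injection; by contrast, $\tau$ is characterized by \emph{pre}composition with the cocone injections, $\tau\circ\iota_k=\alpha_k$. Passing between these is not formal, and it is exactly where the paper spends the second half of its proof: one must show that the limit projections $\dlim^\Omega_j D(j,-)\Rightarrow D(j,-)$ form an adjointable natural transformation in $k$ (adjointability again coming from Corollary~\ref{cor:mapoflimsismapofcolims} and the adjointability of $D$), and then invoke the general, non-identity form of Lemma~\ref{lem:connectingmaps} for that transformation between the two $(\cat{K},\Psi)$-shaped dagger colimits $\dcolim^\Psi_k\dlim^\Omega_j D(j,k)$ and $\dcolim^\Psi_k D(j,k)$. (Note that even Theorem~\ref{thm:equivalenttounitary}, which upgrades $u$ to a morphism of colimits of $\dag\circ\hat D$, only yields $u\circ\iota_k\circ l_{j,k}^\dag=p_j^\dag\circ\kappa_{j,k}$, which is the desired equation precomposed with the non-epic $l_{j,k}^\dag$; so this shortcut does not close the gap either.) Your proposal asserts that the comparison isomorphism ``is exactly'' $\tau$ but does not supply this argument, and without it you have only shown that the two objects are unitarily isomorphic, not that the canonical $\tau$ itself is unitary.
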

\begin{proof} 
  Because $D$ is adjointable, $\hat{D}(f,g)=D(f,\id)D(g,\id)^\dag$ defines an adjointable bifunctor $\hat{D} \colon \cat{J}\times\cat{K}\op\to\cat{C}$. It follows from Theorem~\ref{thm:limscommutewithlims2} that
  \begin{align*}
    \dcolim^\Psi_k\dlim^\Omega_j D(j,k)=\dlim^\Psi_k\dlim^\Omega_j \hat{D}(j,k) \\
    \simeq_\dag \dlim^\Omega_j\dlim^\Psi_k \hat{D}(j,k)=\dlim^\Omega_j\dcolim^\Psi_k D(j,k)
  \end{align*}
  Thus there exists a unitary morphism $u$ making the following diagram commute:
  \[\begin{tikzpicture}
     \matrix (m) [matrix of math nodes,row sep=2em,column sep=4em,minimum width=2em]
     {
      \dcolim^\Psi_k\dlim^\Omega_j D(j,k) &&  \dlim^\Omega_j D(j,k) \\
      \dlim^\Omega_j\dcolim^\Psi_k D(j,k) & \dcolim^\Psi_k D(j,k) & \dcolim^\Psi_k D(j,k) \\};
     \path[->]
     (m-1-1) edge[dashed] node [left] {$u$} (m-2-1)
            edge node [above] {$$} (m-1-3)
     (m-1-3) edge node [above] {$$} (m-2-3)
     (m-2-1) edge node [below] {$$} (m-2-2)
     (m-2-2) edge node [below] {$$} (m-2-3);
  \end{tikzpicture}\]
  We will show that $u$ satisfies the defining property of $\tau$, and hence equals it.
  Postcomposing with the morphisms $\dlim^\Omega_j\dcolim^\Psi_k D(j,k)\to \dcolim^\Psi_k D(j,k)\to D(j,k)$, this follows from commutativity of the following diagram:
  \[\begin{tikzpicture}
    \matrix (m) [matrix of math nodes,row sep=2.5em,column sep=4em,minimum width=2em]
    {
     \dlim^\Omega_j\dcolim^\Psi_k D(j,k) && \dcolim^\Psi_k D(j,k) \\
     \dcolim^\Psi_k\dlim^\Omega_j D(j,k) & \dlim^\Omega_j D(j,k) & D(j,k)\\
     \dlim^\Omega_j D(j,k) &D(j,k) & \dcolim^\Psi_k D(j,k) \\
     & \dlim^\Omega_j\dcolim^\Psi_k D(j,k) \\};
    \path[->]
    (m-1-1) edge node [above] {$$} (m-1-3)
    (m-1-3) edge node [right] {$$} (m-2-3)
     (m-2-1) edge node [left] {$u$} (m-1-1)
            edge node [below] {$$} (m-2-2)
     (m-2-2) edge node [below] {$$} (m-2-3)
     (m-3-1) edge node [above] {$$} (m-3-2)
           edge node [left] {$$} (m-2-1)
           edge node [below] {$\alpha_k$} (m-4-2)
      (m-3-2) edge node [above] {$$} (m-3-3)
      (m-3-3) edge node [right] {$$} (m-2-3)
      (m-4-2) edge node [below] {$$} (m-3-3);
  \end{tikzpicture}\]
  After all, the top part and bottom parts commute by definition of $u$ and $\alpha_k$. The commutativity of the middle rectangle is guaranteed by~Lemma \ref{lem:connectingmaps} once we show that the limiting cone morphisms $\dlim^\Omega_j D(j,k)\to D(j,k)$ are the components of an adjointable natural transformation $\dlim^\Omega_j D(j,-)\Rightarrow D(j,-)$. The naturality square commutes by definition of $\dlim^\Omega_j D(j,f)$:
  \[\begin{tikzpicture}
    \matrix (m) [matrix of math nodes,row sep=2em,column sep=6em,minimum width=2em]
    {
    \dlim^\Omega_j D(j,k)& \dlim^\Omega_j D(j,h) \\
     D(j,k) & D(j,h) \\};
    \path[->]
    (m-1-1) edge node [left] {$$} (m-2-1)
           edge node [above] {$\dlim^\Omega_j D(j,f)$} (m-1-2)
    (m-1-2) edge node [right] {$$} (m-2-2)
    (m-2-1) edge node [below] {$D(\id,f)$} (m-2-2);       
  \end{tikzpicture}\]
  Adjointability follows from Corollary~\ref{cor:mapoflimsismapofcolims} since $D(-,f)$ is adjointable:
  \[\begin{tikzpicture}
     \matrix (m) [matrix of math nodes,row sep=2em,column sep=6em,minimum width=2em]
     {
     \dlim^\Omega_j D(j,k)& \dlim^\Omega_j D(j,h) \\
      D(j,k) & D(j,h) \\};
     \path[->]
     (m-1-1) edge node [above] {$\dlim^\Omega_j D(j,f)$} (m-1-2)
     (m-2-2) edge node [right] {$$} (m-1-2)
     (m-2-1) edge node [below] {$D(\id,f)$} (m-2-2)
             edge node [left] {$$} (m-1-1);       
  \end{tikzpicture}\]
  This concludes the proof.
\end{proof}

\bibliographystyle{plain}
\bibliography{daggerlimits}

\end{document}